\newcommand{\mathbold}{\mathbb}
\newcommand{\bF}{{\mathbold F}}
\newcommand{\bQ}{{\mathbold Q}}
\newcommand{\bZ}{{\mathbold Z}}
\newcommand{\bC}{{\mathbold C}}
\newcommand{\bR}{{\mathbold R}}
\newcommand{\bB}{\mathbold{B}}
\newcommand{\bN}{{\mathbold N}}
\newcommand{\Rpos}{\bR_{+}}
\newcommand{\Qpos}{\bQ_{+}}
\newcommand{\op}{\mathrm{op}}
\DeclareMathOperator{\Spec}{Spec}
\newcommand{\GL}{{\mathrm{GL}}}
\newcommand{\Gm}{{\mathbold G}_{\mathrm{m}}}
\newcommand{\vbl}{-}
\newcommand{\tn}{\otimes}           
\newcommand{\longmap}{{\,\longrightarrow\,}}
\newcommand{\longlabelmap}[1]{{\,\buildrel #1\over\longrightarrow\,}}
\def\longisomap{{\,\buildrel \sim\over\longrightarrow\,}} 
\DeclareMathOperator{\colim}{\mathrm{colim}}
\newcommand{\Gal}{{\mathrm{Gal}}}
\newcommand{\sO}{{\mathcal{O}}}
\DeclareMathOperator{\tr}{\mathrm{tr}}
\newcommand{\CMon}{\mathsf{CMon}}
\newcommand{\Mod}{\mathsf{Mod}}
\newcommand{\CAlg}[1]{\mathsf{CAlg}(#1)}
\newcommand{\Hom}{\mathrm{Hom}}
\newcommand{\Pic}{\mathsf{Pic}}
\newcommand{\Cl}{\mathrm{Cl}}
\newcommand{\Aff}{\mathsf{Aff}}
\newcommand{\mpar}[1]{}
\newcommand{\End}{{\mathrm{End}}}
\newcommand{\id}{{\mathrm{id}}}
\def\isomap{{\buildrel \sim\over\rightarrow}}
\newcommand{\Ded}{R}
\newcommand{\Dedplus}{R_+}
\newcommand{\rightlabelxyarrows}[2]{{\ar@<0.7ex>^-{#1}[r]\ar@<-0.7ex>_-{#2}[r]}}
\newcommand{\displaylabelfork}[6]{{	\entrymodifiers={+!!<0pt,\fontdimen22\textfont2>}
	\def\objectstyle{\displaystyle}
\xymatrix{{#1} \ar^-{#2}[r] & {#3} \ar@<0.7ex>^-{#4}[r]\ar@<-0.7ex>_-{#5}[r] & {#6}}}}
\newcommand{\predisplaylabelfork}[6]{{{#1} \ar^-{#2}[r] & {#3} \ar@<0.7ex>^-{#4}[r]\ar@<-0.7ex>_-{#5}[r] & {#6}}}
\newtheoremstyle{mythm}{}{}%
  {\itshape}
  {}
  {\bfseries}
  {}
  { }
  {\thmnumber{#2.\hspace{1.5mm}}\thmname{#1}\thmnote{ {\mdseries(#3)}}.}
\newtheorem*{theorem*}{Theorem} 
\newtheoremstyle{intro}{}{}%
  {\itshape}
  {}
  {\bfseries}
  {}
  { }
  {\thmname{#1}\thmnumber{ #2}\thmnote{ #3}.}
\newtheoremstyle{mythmnotitalic}{}{}%
  {}
  {}
  {\bfseries}
  {}
  { }
  {\thmnumber{#2.\hspace{1.5mm}}\thmname{#1}\thmnote{ {\mdseries(#3)}}.}
\newtheoremstyle{mythmnoperiod}{}{}%
  {}
  {}
  {\bfseries}
  {}
  { }
  {\thmnumber{#2.\hspace{0.5mm}}\thmname{#1}\thmnote{{\mdseries(#3)}}}
\numberwithin{equation}{subsection}
\theoremstyle{mythmnotitalic}
\newtheorem{remark}[subsection]{Remark}
\newtheorem{example}[subsection]{Example}
\newtheorem{definition}[subsection]{Definition}
\theoremstyle{mythm}
\newtheorem{theorem}[subsection]{Theorem}
\newtheorem{proposition}[subsection]{Proposition}
\newtheorem{lemma}[subsection]{Lemma}
\newtheorem{corollary}[subsection]{Corollary}
\theoremstyle{mythmnoperiod}
\newtheorem{ssec}[subsection]{}
\theoremstyle{intro}
\newtheorem*{prop*}{proposition}
\title{Facets of module theory over semirings}
\author[J.~Borger]{James Borger}
\address{Mathematical Sciences Institute\\ Australian National University\\Canberra ACT 0200\\Australia}
\address{International Research Laboratory\\ France Australia Mathematical Sciences and Interactions}
\email{james.borger@anu.edu.au}
\author[J.~Jun]{Jaiung Jun}
\address{Department of Mathematics, State University of New York at New Paltz, NY, USA}
\email{junj@newpaltz.edu}
\thanks{The second author acknowledges the support of an AMS-Simons Research Enhancement Grant for Primarily Undergraduate Institution (PUI) Faculty during the writing of this paper.}
   \def\MR#1{}
\begin{document}
	
\maketitle

\begin{abstract}
We set up some basic module theory over semirings, with particular attention to what is needed in
scheme theory over semirings.
We show that while not all the usual definitions of vector bundle agree over semirings, all the usual
definitions of line bundle do agree. We also show that the narrow class group of a number field
can be recovered as a reflexive Picard group of its subsemiring of totally nonnegative algebraic integers.
\end{abstract}

\setcounter{tocdepth}{1}
\tableofcontents

\section{Introduction}
This paper is about module theory over (commutative) semirings, with particular attention on what is needed in scheme theory over semirings. So we will say something about that first.

Scheme theory over a base ring $R$ is the syntactic theory of polynomial equations with coefficients in $R$. We
can say this because there is a category-theoretic machine which when applied to polynomials equations with
coefficients in $R$, outputs scheme theory over $R$. While scheme theory was not presented this way in the
first and most commonly cited accounts~\cite{EGA-no.4}\cite{Hartshorne:algebraic-geometry}, something
close to it did appear in other developments 
\cite{Demazure:GA}\cite{EGAI-Springer-edition}\cite{Grothendieck:Buffalo-course}\cite{Jantzen:book-first-edition}
from about the same time, taking the functor of
points as the starting point. But even these works broke what we consider
strict category-theoretic form at places. To our
knowledge, no one wrote out foundations embracing this point of view fully until
T\"oen--Vaqui\'e~\cite{Toen-Vaquie:Under-Spec-Z}.

But now that we have it, it is natural to ask whether we may similarly study polynomial equations with
coefficients in any \emph{semiring}---``rings but possibly without subtraction''---most importantly the
nonnegative integers $\bN$, rationals $\Qpos$, or reals $\Rpos$. A semiring is after all the smallest algebraic
structure closed under addition and multiplication, and hence the minimal structure that supports a theory of
polynomials. The smallest base ring allowed in algebraic geometry has progressed over time down the hierarchy of
numbers, from $\bC$ to $\bQ$ and then to $\bZ$, reversing the historical enlargement of the concept of number.
Going all the way to $\bN$ would bring this process to its natural conclusion.

The question of whether such a theory exists was in fact already answered in T\"oen and Vaqui\'e's original
paper: flatness, faithfully flat descent, the fpqc topology, and open immersions all work perfectly well over
semirings. This allows one to define schemes over any semiring, as well as the associated fibered categories of
schemes and quasi-coherent sheaves. Thus the most fundamental objects of scheme theory have definitions over
semirings in which we can have complete confidence. One simply starts with the theory of polynomial equations
over $\bN$ or $\Rpos$, or one's favorite base semiring, and applies a general category-theoretic machine.

Allowing base rings like $\bZ$ in algebraic geometry rather than only base fields like $\bQ$ gave rise to the
concept of an integral model of a scheme over $\bQ$. Until then, integrality had to be handled outside the
theory in an \emph{ad hoc} way. Similarly, admitting base semirings such as $\bN$ or $\Rpos$ gives rise to the
concept of a \emph{positive model} of a scheme over $\bZ$ or $\bR$. While positive models do exist in the
literature today, they are similarly handled in an \emph{ad hoc} way. See for example the inspiring paper of
Fock--Goncharov~\cite{Fock-Goncharov:higher-Teichmueller}.

One is further encouraged in this direction by global class field theory. In the theory of ray class groups, positivity plays the role of $p$-adic integrality at the infinite place $p=\infty$. So we might write
$$
\bQ_\infty=\bR,\quad \bZ_\infty=\Rpos,
$$
the first of which is common notation but the second of which is, we believe, new. Taking this further, $\bZ_p$ has a residue field, $\bF_p$, but does $\Rpos$? The semiring $\Rpos$ is indeed a local semiring, meaning it has a unique maximal ideal. But it is also a semifield, meaning it is nonzero and every nonzero element is multiplicatively invertible, which might suggest that it has no nontrivial quotients. However unlike in ring theory, semifields can have nontrivial quotients. In fact, $\Rpos$ has a unique one---the \emph{Boolean semifield} 
$$
\bB=\{0,1\}=\bN/(1+1\sim 1).
$$ 
The quotient map $\Rpos\to\bB$ is the unique map; it is given by
$$
x\mapsto \begin{cases} 1, & x>0 \\ 0, & x=0.\end{cases}
$$
We might therefore metaphorically call $\bB$ the residue semifield $\bF_\infty$ at the infinite place.

But is all this going to far? Transporting language from the finite places to the infinite place is admittedly
not meant to be taken too seriously. But it is harmless if kept in place and is sometimes inspiring. The more
interesting question is whether $\bB$ and similar semirings should be taken seriously as objects of interest,
regardless of the language we use for them. We submit the answer is \emph{yes}. Let us compare $\bB$ to
$\bF_2$. First, the existence of the ring $\bF_2$ is merely the statement that congruence modulo $2$, an
important property in number theory, is preserved under addition and multiplication. Likewise the existence of
$\bB$ is the statement that positivity, another important property, is similarly preserved. Using the language
of $\bB$ to study positivity in the theory of polynomials is no bigger a departure from core mathematics than is
using $\bF_2$ to study congruence modulo $2$. Second, we remind the reader that every semiring admits a
presentation $\bN[x_i\mid i\in I]/(f_j\sim g_j\mid j\in J)$. So a semiring up to isomorphism is nothing more
than a system of polynomial equations over $\bN$ up to algebraic equivalence. If a system of
equations has no solutions in any ring but does so in $\bB$, this is important information, much as would be
the case if it had no solutions in $\bZ[1/2]$-algebras but did so in $\bF_2$. There are no pathological
semirings any more than there are pathological systems of polynomial equations with coefficients in $\bN$ or,
dare we say, any more than there are pathological natural numbers.

Here are some more small facts that might inspire the reader. The category of rings is nothing more than the
category of semirings $R$ admitting a map $\bZ\to R$, which is necessarily unique. So passing from rings to
semirings is more like passing from rings to $\bZ[1/p]$-algebras than passing from rings other to
generalizations, such as possibly non-commutative rings. Further, a semiring $R$ is a ring if and only if
$\bB\otimes_\bN R=\{0\}$, much as a ring is a $\bZ[1/p]$-algebra if and only if $\bF_p\otimes_\bZ R=\{0\}$.
So we might say that the complement of the Boolean point of $\Spec(\bN)$ is
$\Spec(\bZ)$, just as the complement of the $\bF_p$-point of $\Spec(\bZ)$ is $\Spec(\bZ[1/p])$.

Many basic concepts in commutative algebra bifurcate over semirings. A large part of this paper considers this
for the concepts of line bundle and vector bundle, but we have already mentioned a simpler example---the concept
of field. There are semifields, such as $\Rpos$, which are non-simple in that they have nontrivial quotients. So
points in the Zariski topology over semirings are subtler than over rings. We might however still ask what the
simple objects in the category of semirings are. This is answered definitively by Golan's theorem: a semiring is
simple if and only if it is either a field or isomorphic to $\bB$.

Continuing, the group scheme $\mu_n$ of $n$-th roots of unity and constant group scheme $\underline{\bZ/n\bZ}$
are never isomorphic over $\bQ$---unless $n=2$. But this exception disappears over $\Qpos$, because
$\mu_2(\Qpos)$ has one element while $\underline{\bZ/2\bZ}(\Qpos)$ has two. A single-sided inverse of a square
matrix over any semiring is necessarily a two-sided inverse, which is familiar over fields and even over rings.
Over semirings, it is due to~\cite{reutenauer1984inversion}. The Cayley--Hamilton theorem is true
over semirings, once suitably formulated~\cite{Rutherford:Cayley-Hamilton}. A theory of the \'etale fundamental
group exists for schemes over semirings. This is developed in Culling's thesis~\cite{culling2019etale}.

\vspace{5mm}

What about examples? Any scheme over a ring is a scheme over $\bZ$ and hence over $\bN$. So the category of
schemes over the semiring $\bN$ contains the category of schemes over $\bZ$ as a (full) subcategory. Also, any
system of polynomial equations over any semiring defines an affine scheme over it. So there are plenty of
examples. What we want however are examples with some meaning, such as moduli spaces, over
arithmetically interesting semirings like $\bN$, $\Qpos$, or $\Rpos$. And since any moduli space over these
semirings gives after base change one over $\bZ$, $\bQ$, or $\bR$, we can equivalently ask:

\vspace{3mm}
\emph{Which interesting moduli spaces over rings like $\bZ$, $\bQ$, or $\bR$ have preferred positive models?}
\vspace{3mm}

One might say this brings to the infinite place the distinguished tradition of finding integral models of moduli spaces at finite places. One might then hope to study the Boolean fiber, no doubt degenerate, and use that as a lever to handle the original moduli space. We view this as the most important direction in scheme theory over semirings.

\vspace{5mm}

This paper began with a question of this nature, namely whether the Picard stack is algebraic, say for schemes
which are flat, projective, and of finite presentation over $\bN$ or $\Rpos$. But we realized quickly that our
test question might be many questions, because it was not clear whether the classical definitions of \emph{line
bundle} remained equivalent over semirings. So we set out to set up some of the basic linear algebra, or module
theory, over general semirings, with particular emphasis on line bundles and vector bundles. This is the purpose
of this paper.

For line bundles, it is fortunately true that all the classical definitions agree:

\begin{theorem}\label{thm-intro: line bundles}
Let $R$ be a semiring. The following conditions on $R$-modules are equivalent:
\begin{enumerate}
	\item invertible,
	\item Zariski-locally free of rank $1$,
	\item fpqc-locally free of rank $1$.
\end{enumerate}
\end{theorem}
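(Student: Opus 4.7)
My plan is to prove the cycle $(2) \Rightarrow (3) \Rightarrow (1) \Rightarrow (2)$; the first implication is immediate since every Zariski cover is an fpqc cover, and the main work is in $(1) \Rightarrow (2)$.

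For $(3) \Rightarrow (1)$, I would take $N := \Hom_R(M, R)$ as the candidate inverse. Since $M$ is fpqc-locally finitely presented, it is itself finitely presented (finite presentation being local in the fpqc topology), so the formation of $N$ commutes with flat base change. Pulling back to an fpqc cover $R \to S$ on which $M \otimes_R S$ is free of rank $1$, the evaluation pairing $\mu \colon M \otimes_R N \to R$ base-changes to the multiplication map $S \otimes_S S \to S$, which is an isomorphism. By fpqc descent for isomorphisms, $\mu$ is itself an isomorphism, so $M$ is invertible.

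For the essential implication $(1) \Rightarrow (2)$, start with an isomorphism $\mu \colon M \otimes_R N \xrightarrow{\sim} R$ with inverse $\eta$, and write $\eta(1) = \sum_{i=1}^n m_i \otimes n_i$. Setting $e_i := \mu(m_i \otimes n_i)$, the equation $\mu(\eta(1)) = 1$ yields $\sum_i e_i = 1$, and by the theory of Zariski covers of semirings developed earlier in the paper, $\{R \to R[1/e_i]\}$ is then a Zariski cover. I would next show that $M[1/e_i]$ is free of rank $1$ with basis $m_i$. Writing $\alpha_i := \mu(- \otimes n_i) \colon M \to R$, define $\phi_i \colon R[1/e_i] \to M[1/e_i]$ by $1 \mapsto m_i$ and $\psi_i \colon M[1/e_i] \to R[1/e_i]$ by $m \mapsto e_i^{-1} \alpha_i(m)$. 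The composition $\psi_i \circ \phi_i = \id$ is immediate from $\alpha_i(m_i) = e_i$; the other composition $\phi_i \circ \psi_i = \id$ reduces to the pointwise identity $\alpha_i(m)\, m_i = e_i\, m$ in $M$.

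This key identity is the heart of the argument. Since $\mu$ is injective and both $m_i \otimes n_i$ and $\sum_j e_i (m_j \otimes n_j)$ map to $e_i$ under $\mu$, they are equal in $M \otimes_R N$. Applying the canonical $R$-linear map $M \otimes_R N \to \End_R(M)$ given by $a \otimes b \mapsto (c \mapsto a\, \mu(c \otimes b))$ translates this equality into $\alpha_i(c)\, m_i = e_i\, \Phi(c)$ as endomorphisms of $M$, where $\Phi(c) := \sum_j \alpha_j(c)\, m_j$. The desired identity $\alpha_i(c)\, m_i = e_i\, c$ then follows once one knows $\Phi = \id_M$, which is the zigzag identity for the dualizable object $M$. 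The principal subtlety I anticipate is securing this zigzag identity over semirings: if the paper's notion of invertibility is categorical (dualizable with isomorphism evaluation), it is immediate; otherwise one needs the standard categorical fact that $M \otimes_R N \cong R$ in the symmetric monoidal module category of a semiring forces full dualizability with structure maps $(\mu, \eta)$, which is the delicate point requiring the most care.
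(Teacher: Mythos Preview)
Your argument for $(3) \Rightarrow (1)$ is fine and close in spirit to the paper's, which packages it as ``invertibility is an fpqc-local property'' (Corollary~\ref{cor: invertibility is an fpqc-local property}).

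There is, however, a genuine gap in $(1) \Rightarrow (2)$. You take $\eta := \mu^{-1}$, write $\eta(1) = \sum m_i \otimes n_i$, and then claim that $\Phi(c) = \sum_j \mu(c \otimes n_j)\, m_j$ equals $\id_M$ because ``this is the zigzag identity for the dualizable object $M$.'' But the triangle identities hold for the pair $(\varepsilon, \eta_{\mathrm{adj}})$ supplied by the tensor--hom adjunction, where $\eta_{\mathrm{adj}} \colon R \to M^\vee \otimes_R M$ is the coevaluation---not for your $\eta = \varepsilon^{-1}$. Your $\Phi = \id_M$ would follow only if $\varepsilon^{-1}$ agreed (after the symmetry) with $\eta_{\mathrm{adj}}$, i.e.\ if the unit and counit were mutually inverse. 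The paper explicitly notes (in the remark following Corollary~\ref{cor: invertible-iff-locally-rank-1}) that this is a \emph{consequence} of the theorem, not something they can prove independently, and that it fails in general symmetric monoidal categories (the odd line in super vector spaces). Switching to $\eta_{\mathrm{adj}}$ does not save your argument either: the zigzag then gives $\Phi = \id_M$, but your cover identity $\sum_i e_i = 1$ becomes $\varepsilon\bigl(\text{swap}(\eta_{\mathrm{adj}}(1))\bigr) = 1$, which is the same ``mutual inverse'' statement in disguise. So in either reading the argument is circular.

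The paper takes a different route for $(1) \Rightarrow (2)$. It realizes $M$ as the image $M_\pi$ of an idempotent matrix $\pi \in R^{n \times n}$ and works with \emph{all} entries $\pi_{ij}$ rather than a diagonal family. Surjectivity of the counit $\varepsilon_\pi$ alone forces the $\pi_{ij}$ to generate the unit ideal (Proposition~\ref{pro:unit-counit-criterion}(1)), yielding the Zariski cover $\{R[1/\pi_{ij}]\}$. Then surjectivity of $\varepsilon_\pi$ and $\eta_\pi$ separately (each guaranteed by Proposition~\ref{pro:invertible-module-equivalent-conditions}) is combined in Proposition~\ref{pro:unit-counit-criterion}(3) to show that all $2 \times 2$ minors of $\pi$ vanish; over each $R[1/\pi_{ij}]$ this factors $\pi$ as a column times a row, each containing a unit, trivializing $M_\pi$. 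The crucial point is that the two surjectivity statements are used individually---the argument never needs $\varepsilon$ and $\eta$ to be inverses of one another.
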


For vector bundles, there is good and bad news:

\begin{theorem}\label{thm-intro: vector bundles}
The following conditions on finitely presented $R$-modules are equivalent:
\begin{enumerate}
	\item dualizable,
	\item projective,
	\item flat.
\end{enumerate}	
Further, all fpqc-locally free modules of finite rank satisfy these properties.
But there do exist semirings $R$ with finitely presented projective modules which are not locally free in the 
Zariski or fpqc topology.
\end{theorem}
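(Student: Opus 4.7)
I would split the theorem into three parts: the equivalences among (1)--(3) for finitely presented modules, the assertion that fpqc-local freeness implies them, and the counterexample.

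For the equivalences I plan to prove the four implications (2) $\Rightarrow$ (1), (1) $\Rightarrow$ (2), (2) $\Rightarrow$ (3), and (3) $\Rightarrow$ (2). The direction (2) $\Rightarrow$ (3) is routine: tensoring with a free module is a direct sum of copies of the identity, which preserves whatever notion of finite limit is being used as the definition of flatness, and flatness then passes to retracts since retracts of left exact functors are left exact. The equivalence (1) $\Leftrightarrow$ (2) is the standard symmetric-monoidal argument. For (2) $\Rightarrow$ (1), a retract of the self-dual module $R^n$ is itself dualizable by formal nonsense in any symmetric monoidal category. For (1) $\Rightarrow$ (2), writing the coevaluation $R \to M \otimes M^{\vee}$ as $1 \mapsto \sum_{i=1}^n e_i \otimes e_i^\ast$, the triangle identities assemble the $e_i$ and $e_i^\ast$ into an idempotent endomorphism of $R^n$ whose image is $M$, so that $M$ is a retract of $R^n$ and hence projective and finitely presented. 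The remaining direction (3) $\Rightarrow$ (2) is the only one requiring semiring-specific care: over rings it follows from Lazard's theorem, and over semirings I would need a Lazard-type statement that every flat $R$-module is a filtered colimit of finitely generated free modules, a result I would expect to have been set up earlier in the paper.

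For the descent claim, if $R \to S$ is faithfully flat with $M \otimes_R S$ free of finite rank, then $M \otimes_R S$ is dualizable, projective, and flat over $S$. Flatness descends along $R \to S$ by the T\"oen--Vaqui\'e descent results already cited, and the remaining two properties then follow from the equivalences just proved.

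The counterexample is the main obstacle. I want a semiring $R$ with an idempotent $e \in \Mat_n(R)$ whose image $M = eR^n$ is projective and finitely presented (automatically, being a retract of $R^n$), but not free of constant rank on any faithfully flat cover. The feature to exploit is that the Zariski spectrum of a semiring can be much sparser than in the ring case---for example $\Spec \bB$ is a single point and $\Spec \Rpos$ has only two---so global idempotents need not refine to locally constant rank decompositions. I would begin by testing small semirings such as $\bB$, finite distributive lattices, or $\bN$ with carefully chosen congruences, searching for an explicit $e$ whose image has ``inconsistent ranks'' under different fpqc base changes. Verifying that this incompatibility persists under \emph{every} faithfully flat base change, rather than just the obvious ones, is the technical heart of the argument and where I would expect to spend most of my effort.
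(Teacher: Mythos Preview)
Your treatment of the equivalences (1)--(3) matches the paper's: (1)$\Leftrightarrow$(2) via the triangle identities, (2)$\Rightarrow$(3) by retracts of free modules, and (3)$\Rightarrow$(2) via a Lazard-type theorem for semirings, which the paper does establish earlier as the equational criterion for flatness. For the descent step your route differs slightly and has a small gap: you descend flatness and then invoke the equivalences, but those equivalences are stated for \emph{finitely presented} modules, and you have not argued that finite presentation of $M$ descends from $S$ to $R$. The paper does prove this separately, so your argument can be completed, but the paper itself takes a more direct route: dualizability is an fpqc-local property because the triangle axioms are phrased entirely in terms of tensor products and morphisms, all of which satisfy descent. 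This gives fpqc-locally finite free $\Rightarrow$ dualizable immediately, and the rest follows.

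The genuine gap is in the counterexample. Your plan to search for an idempotent whose image has ``inconsistent ranks under different fpqc base changes'' points in the wrong direction, and you correctly identify that ruling out \emph{every} faithfully flat cover is the hard part---but you are missing the idea that does this. The paper works over $\bB$ and uses Golan's theorem: every nonzero $\bB$-algebra admits a homomorphism back to $\bB$. Consequently any fpqc cover $\bB\to\prod_i R_i$ has some factor $R_{i_0}$ with a retraction to $\bB$, so if each $R_i\otimes_\bB M$ is free then $M\cong \bB\otimes_{R_{i_0}}(R_{i_0}\otimes_\bB M)$ is already free over $\bB$. Thus over $\bB$, fpqc-locally free equals free, and the problem collapses to exhibiting a finitely generated projective $\bB$-module that is not free---for instance the three-element lattice, or the $\bB$-module of monotone functions from a non-discrete finite poset to $\bB$. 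The obstruction is not a rank incompatibility but the rigidity of $\bB$ under all fpqc covers; without Golan's theorem or an analogous splitting statement, your proposed search has no mechanism to terminate.
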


A slightly refined expression of these theorems is depicted in table~\ref{table: implications}. 
We emphasize that at the time of this writing, we do not know whether all fpqc-locally finite free modules are Zariski-locally free. We also note that it was already observed in~\cite{jun2024vector} that there exist finitely generated projective modules which are not Zariski-locally free.

\begin{table}[t]
\label{table: implications}
\begin{tabular}{c}
$$
	\begin{tikzcd}[row sep=1.5cm, column sep=1cm]
		\text{Zar.-loc.\ free} \arrow[r] &
		\text{fpqc-loc.\ free} \arrow[dashed,r,"\text{rank}<\infty",swap] 
		\arrow[bend right=20,dashed,swap]{l}{\text{?}} &	
		\text{dualizable}  \arrow[r]  \arrow[bend right=20,dashed,swap]{l}{\text{over rings}}& 
		\text{projective} \arrow[r] \arrow[bend right=20,dashed,swap]{l}{\text{f.gen}} &
		\text{flat} \arrow[bend right=30,dashed,swap]{l}{\text{f.pres}}
	\end{tikzcd}
$$\\
\\
$$
	\begin{tikzcd}[row sep=1.5cm, column sep=1cm]
		\text{Zariski-loc.\ free of rank 1} \arrow[r,<->]  &
		\text{fpqc-loc.\ free of rank 1}  \arrow[r,<->] &	
		\text{invertible}  
	\end{tikzcd}
$$	
\end{tabular}
\vspace{\abovecaptionskip}
\caption[]{Implications between different definitions of \emph{line bundle} and \emph{vector bundle}. Dashed arrows indicate that the implication does not hold in general but does under the condition given. We add that every finitely generated projective module is finitely presented.}
\end{table}

Since there was then apparently no doubt about the definition of line bundle, we considered the following question: 

\vspace{3mm}
\emph{Is the narrow class group of a number field equal to the Picard group of its subsemiring of totally nonnegative algebraic integers? }
\vspace{3mm}

One would expect this to be true if one took seriously the analogy above, that positivity is $\infty$-adic integrality. Unfortunately, it isn't true. But it becomes true if instead of using invertible modules, we use a slight weakening: reflexive modules of rank $1$. 

To explain this, fix a number field $F$ and any set $S$ of real places, for instance all of them. Let 
${\Cl_S}(F)$ 
denote the corresponding narrow class group, which is to say the quotient of the group of fractional ideals of $F$ by the subgroup of elements of the form $(\beta)$, where $\beta\in F^*$ satisfies $\sigma(\beta)\geq 0$ for all $\sigma\in S$. 

Then let $\sO_{F_+}$ denote the subsemiring of $F$ consisting of algebraic integers which are nonnegative at all places in $S$, and let $\Pic^{\mathrm{refl}}(\mathcal{O}_{F_+})$ denote the set of isomorphism classes of reflexive $\sO_{F_+}$-modules of rank $1$. Here, a module $L$ is \emph{reflexive} if the canonical map $L\to L^{\vee\vee}$ to its double dual is an isomorphism, and $L$ is \emph{of rank 1} if $F\otimes_{\sO_{F_+}}L$ is $1$-dimensional as a vector space over $F$.

\begin{theorem}\label{thm-intro: narrow class group}
There is a canonical bijection 
${\Cl_S}(F)\longisomap \Pic^{\mathrm{refl}}(\mathcal{O}_{F_+}).$ 
\end{theorem}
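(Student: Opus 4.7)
The plan is to define the map $I \mapsto I_+ := I \cap F_+$ from fractional $\sO_F$-ideals to $\sO_{F,+}$-submodules of $F$, where $F_+ := \{x \in F \mid \sigma(x) \geq 0 \text{ for all } \sigma \in S\}$, and to show that it descends to the required bijection. Two archimedean lattice facts drive everything: $(i)$ every nonzero fractional $\sO_F$-ideal $I$ contains totally $S$-positive elements, since the image of the $\bZ$-lattice $I$ in $\bR^{|S|}$ meets the open positive orthant; and $(ii)$ consequently $I_+ - I_+ = I$ inside $F$, obtained by writing $x \in I$ as $Ny - (Ny - x)$ with $y \in I_+$ totally $S$-positive and $N$ sufficiently large.

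The engine of the proof is the duality computation $I_+^\vee \cong (I^{-1})_+$ for every fractional ideal $I$. Given $\phi \in I_+^\vee := \Hom_{\sO_{F,+}}(I_+, \sO_{F,+})$, extending along $I = I_+ - I_+$ yields an $\sO_F$-linear map $I \to \sO_F$, necessarily multiplication by a unique $\beta \in I^{-1}$; the constraint $\phi(I_+) \subseteq \sO_{F,+}$ together with $(i)$ forces $\sigma(\beta) \geq 0$ for all $\sigma \in S$, so $\beta \in (I^{-1})_+$. Iterating, the canonical evaluation map $I_+ \to I_+^{\vee\vee}$ is an isomorphism, so $I_+$ is reflexive; the identity $F \otimes_{\sO_{F,+}} I_+ = F$ gives rank $1$. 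Since $(\beta I)_+ = \beta I_+$ whenever $\beta$ is totally $S$-positive, the assignment descends to a map $\Phi \colon \mathrm{Cl}_S(F) \to \Pic^{\mathrm{refl}}(\sO_{F,+})$. Injectivity of $\Phi$ is then immediate: an isomorphism $I_+ \cong J_+$ induces, after tensoring with $F$, multiplication by some $\beta \in F^*$ with $\beta I = J$ and $\beta I_+ \subseteq J_+$, and invoking $(i)$ once more forces $\sigma(\beta) \geq 0$ for all $\sigma \in S$.

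The main obstacle is surjectivity. Given a reflexive rank-$1$ module $L$, I would pick any nonzero $\phi_0 \in L^\vee$ (which exists by reflexivity, since $L \neq 0$ gives $L^{\vee\vee} = L \neq 0$) and any $\ell_0 \in L$ with $\phi_0(\ell_0) \neq 0$, and use $\phi_0$ to embed $L \hookrightarrow \sO_{F,+} \subseteq F_+$. Injectivity of $\phi_0$ follows from reflexivity: the $F$-linear extensions yield $\phi(\ell) = \phi(\ell_0) \phi_0(\ell) / \phi_0(\ell_0)$ in $F$ for every $\phi \in L^\vee$ and $\ell \in L$, so $\phi_0(\ell)$ determines the evaluation functional $\hat\ell \in L^{\vee\vee} = L$, hence $\ell$ itself. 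Setting $I := \phi_0(L) - \phi_0(L) \subseteq \sO_F$, a nonzero fractional ideal, the duality computation from the previous paragraph applies to $L$ as well---using that every nonzero element of $L \subseteq F$ is totally $S$-positive, since real embeddings are injective on $F^*$---and yields $L^\vee \cong (I^{-1})_+ = I_+^\vee$; reflexivity of both $L$ and $I_+$ then makes the inclusion $L \hookrightarrow I_+$ an isomorphism. The class $[I] \in \mathrm{Cl}_S(F)$ is well-defined because any other nonzero $\phi_0' \in L^\vee$ differs from $\phi_0$ by multiplication by some $c \in F^*$ after tensoring with $F$, and the requirement $c \phi_0(L) \subseteq \sO_{F,+}$ forces $\sigma(c) \geq 0$ for $\sigma \in S$.
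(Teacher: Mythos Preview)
Your proof is correct and follows essentially the same route as the paper: both arguments rest on the identity $(I_+)^\vee = (I^{-1})_+$ (the paper writes $(I^\vee)_+$), iterate it to get reflexivity of $I_+$, and handle surjectivity by embedding a reflexive rank-$1$ module $L$ into $F$ and identifying its image with some $I_+$. The only variation is tactical---you embed via a chosen functional $\phi_0\in L^\vee$ and conclude $L=I_+$ from $L^\vee=I_+^\vee$ plus reflexivity, whereas the paper embeds via the canonical map $L\to F\otimes_{\mathcal{O}_{F_+}}L$ (using a squaring trick to land in $F_+$) and verifies $M=I_+$ directly from the double-dual description---but these are interchangeable.
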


It follows that the reflexive class group has a description as an adelic double quotient:
\begin{equation} 
	\Pic^{\mathrm{refl}}(\mathcal{O}_{F_+})
	= F^\times\backslash\sideset{}{'}\prod_{v} F_v^\times/\prod_{v}\mathcal{O}_{F_v}^\times,	
\end{equation}
where $v$ runs over the non-complex places and at the real places
$\mathcal{O}_{F_v}$ is understood to be $\Rpos$.

A reflexive class group has also been considered in more traditional algebraic geometry,
especially for varieties $X$ with normal singularities
where it agrees with the Weil divisor class group. (See the Stacks Project~\cite{stacks-project}, tag 0EBK.)
This suggests that one could view $\Spec(\sO_{F_+})$ as being not regular at $\infty$. 

\vspace{3mm}
\emph{Contents of the sections.}
Sections~\ref{section: preliminaries}--\ref{section: Zariski topology} are an exposition of the basic theory semirings and modules over them, including the equational criterion for flatness, faithfully flat descent, and the Zariski topology. The novelty of these sections is mostly in the exposition.

Sections~\ref{section: finite presentation and generation}--\ref{section: line bundles} cover the different definitions of vector bundle and line bundle. They culminate in the proof of theorems~\ref{thm-intro: line bundles} and~\ref{thm-intro: vector bundles} and the rest of the implications in table~\ref{table: implications}.

Sections~\ref{section: reflexive modules} and~\ref{section: narrow class group} prove theorem~\ref{thm-intro: narrow class group}.

Sections~\ref{section: GL_n is not flat} and~\ref{section: GL_n near the Boolean fiber} form an appendix on the failure of flatness of the affine group scheme $\GL_n$ over semirings like $\bN$ and $\Rpos$. We hope they reveal something about complications of vector bundles of higher rank over semirings. The main consequence of these sections is the following:

\begin{theorem}
\label{thm-intro: GL_n is not flat}
If $n\geq 2$, then the group scheme $\GL_n$ is not flat over $\bN$, $\Qpos$, or $\Rpos$ (or more generally any nonzero additively cancellative semiring which is negative free). It is however flat over the Boolean semifield $\bB$, where it agrees with the torus normalizer ${\mathbold G}_{\mathrm{m}}^n\rtimes \underline{S_n}$.
\end{theorem}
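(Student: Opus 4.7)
The theorem has two independent halves, and my plan is to address them separately. First, for the Boolean half, I would verify the functorial identification $\GL_n(R) = \Gm(R)^n \rtimes S_n$ for every $\bB$-algebra $R$, i.e., every idempotent semiring. The key is that every idempotent semiring is zero-sum-free: if $a + b = 0$, then $a = a + (a + b) = (a + a) + b = a + b = 0$, and similarly $b = 0$. Hence for an invertible $X \in M_n(R)$ with inverse $Y$, each off-diagonal relation $\sum_k x_{ik} y_{kj} = 0$ forces every summand $x_{ik} y_{kj}$ to vanish in $R$. A combinatorial analysis using these vanishings together with the symmetric relations from $YX = I$ shows that the supports of the rows of $X$ are singletons that partition $\{1, \ldots, n\}$ with unit entries, giving the claimed monomial decomposition naturally in $R$. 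Since $\Gm^n \rtimes \underline{S_n}$ is represented over $\bB$ by $\prod_{\sigma \in S_n} \bB[t_1^{\pm 1}, \ldots, t_n^{\pm 1}]$, a finite product of Laurent polynomial semirings, it is manifestly flat over $\bB$.

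For the non-flatness half, I would work directly with $A = \bN[\GL_n]$ for $n \geq 2$; the extensions to $\Qpos$, $\Rpos$, and other nonzero negative-free additively cancellative semirings then follow by base change, since the relation I exploit persists and remains nontrivial. The $(1, 2)$-entry of $XY = I$ gives the relation $\sum_{k=1}^n u_k = 0$ in $A$, where $u_k = x_{1k} y_{k2}$. The natural map $A \to A \otimes_\bN \bZ \cong \bZ[\GL_n]$ sends each $u_k$ to a nonzero rational function (via Cramer's rule, $y_{ij} = (-1)^{i+j} M_{ji}/\det$), so each $u_k$ is nonzero in $A$. Now apply the equational criterion for flatness established earlier in the paper to this relation: if $A$ were flat, there would exist elements $p_l \in A$ and nonnegative integer coefficients $\alpha_{kl}$ with $u_k = \sum_l \alpha_{kl} p_l$ for each $k$, and $\sum_k \alpha_{kl} = 0$ in $\bN$ for each $l$. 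Because $\bN$ is itself zero-sum-free, the latter condition forces every $\alpha_{kl} = 0$, hence every $u_k = 0$, contradicting nonvanishing.

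The main technical obstacle is deploying the equational criterion for flatness in the appropriate semiring form. Unlike the ring case, where the criterion naturally accepts equations of the shape $\sum r_i m_i = 0$, the semiring version is phrased in terms of equalities $\sum r_i m_i = \sum s_j n_j$, and the ``empty right-hand side'' variant I used above needs to be derived from the paper's formulation---for instance by treating the zero side as an empty sum, or by padding with zero witnesses and iterating on the resulting auxiliary equations $\sum b_{jl} p_l = 0$ until they terminate. A subsidiary obstacle is the combinatorial bookkeeping in the Boolean case when $R$ is an idempotent semiring with possible zero divisors: the argument is cleanest over $\bB$ itself but requires more careful tracking of supports in general; zero-sum-freeness of $R$ nonetheless suffices to drive through the row-column analysis identifying invertible matrices with monomial ones.
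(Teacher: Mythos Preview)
Your proposal is correct and follows the same outline as the paper, with two packaging differences worth noting. For non-flatness, both arguments reach the key step that flatness would force each $u_k = x_{1k}y_{k2}$ to vanish in the coordinate semiring; but where you unwind the equational criterion by hand, the paper simply cites Proposition~\ref{pro:neg-free-and-canc-flat-local} (a flat module over a negative-free semiring is itself negative free), which dissolves your ``main technical obstacle'' in one line. For the contradiction, you argue that $u_k$ maps to a nonzero rational function in $\bZ[\GL_n]$; the paper instead writes down one explicit matrix in $\GL_n(\bZ)$ with $x_{12}y_{22}=-1$, deducing $-1=0$ in $\bZ\otimes_\bN R$, which handles a general nonzero negative-free additively cancellative $R$ uniformly rather than by re-running the argument after base change. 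For the Boolean half, your pointwise check that invertible matrices over an idempotent semiring are monomial is the same combinatorics the paper uses, but the paper phrases it as constructing a Zariski-local section of the monomorphism $\Gm^n\rtimes\underline{S_n}\to\GL_{n/R}$ (inverting one summand of $\sum_k x_{ik}y_{ki}=1$), which absorbs your ``subsidiary obstacle'' about disconnected $R$ automatically and in fact works over any $R$ with $\bZ\otimes_\bN R=0$, not just $\bB$-algebras.
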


\emph{Previous work.}
We placed ourselves above in a certain tradition~\cite{Toen-Vaquie:Under-Spec-Z}\cite{borger2016witt} which takes invariance under base change as the most basic requirement. But there is much other work on algebraic geometry and linear algebra over semirings of a more absolute flavor.

As for vector bundles over semirings, we mention \cite{flores2014picard}\cite{pirashvili2015cohomology}\cite{jun2019picard}\cite{jun2024vector} \cite{gross2022principal}.

For algebraic geometry over $\bB$-algebras, there is of course the Tropical school. The literature there is
vast; so we will just cite the book of Maclagan and Sturmfels~\cite{Maclagan-Sturmfels:book}, which has an
extensive bibliography. That subject is however not practiced in a scheme-theoretic style, with the notable
exceptions
of~\cite{Giansiracusa-Giansiracusa:equations-of-tropical-varieties}\cite{Maclagan-Rincon:tropical-ideals}
\cite{Maclagan-Rincon:tropical-schemes}. It would be interesting to investigate whether there is a
scheme-theoretic approach to the topics of interest to that school in general. Finally we mention the work of
Connes--Consani, which also lives over $\bB$ but which is focused more on big applications than on developing
tight general theory. We refer the reader to~\cite{Connes-Consani:Riemann-Roch-strategy} and the references therein.

\section{Preliminaries}
\label{section: preliminaries}

This section gives an abbreviated account of some basic definitions and category-theoretic properties of 
commutative semirings and modules over them. For a fuller development, see sections 1 and 2 
of~\cite{borger2016witt}. Golan's books,~\cite{Golan:book1} and the updated~\cite{Golan:book2},
are also good references for many basics, but the reader should
beware that some of his definitions disagree with ours. 

Let $\CMon$ denote the category of commutative monoids, usually written additively. 
The category $\CMon$ has all limits and colimits, 
say because commutative monoids are described by an algebraic theory. 
(For this and other facts about algebraic theories, 
we will follow Borceux~\cite{Borceux:Handbook.v2}, chapter 3.)
It also has a theory of generators and relations, as follows. 
Given an object $M\in\CMon$,
a family of elements of $M$, written $m\:I\to M$ or $(m_i)_{i\in I}$, is a generating family if the image
of $m\:I\to M$ generates $M$, or equivalently if the induced map $F(I)\to M$ is surjective, where $F(I)=\bN^{(I)}=\bigoplus_I\bN$
is the free commutative monoid on the set $I$. We say $M$ is finitely generated if it has a finite generating
family. This is equivalent to requiring the functor $\Hom(M,\vbl)$ to preserve filtered colimits of injections.

Given an object $F\in\CMon$ (often free), a relation on $F$ is a pair $(f,g)\in F^2$. 
We will often use $f\sim g$ as an alternative notation for the pair $(f,g)$. Given a family of relations $J\to F^2$, 
or $(f_j\sim g_j)_{j\in J}$, the congruence it generates is the smallest subobject of $F^2$ which contains 
the image of $J\to F^2$ and is also an equivalence relation on $F$.
It is notated  $( f_j\sim g_j \mid j\in J )$.
Then the set $F/( f_j\sim g_j \mid j\in J)$ of equivalence classes inherits a unique commutative monoid structure from $F$. It is the
universal recipient of a morphism from $F$ in which all the relations $f_j\sim g_j$ are satisfied. A presentation
of a commutative monoid $M$ is then a set $I$, a family of relations
$(f_j\sim g_j)_{j\in J}$ on the free commutative monoid $F(I)=\bN^{(I)}$, and an isomorphism 
	$$
	\bN^{(I)}/( f_j\sim g_j \mid j\in J) \longisomap M.
	$$
One says $M$ is finitely presented if it has a presentation in which $I$ and $J$ are finite. This is equivalent
to requiring the functor $\Hom(M,\vbl)$ to preserve all filtered colimits. 

The category $\CMon$ is enriched over itself: for $M,N\in\CMon$,
the set $\Hom(M,N)$ has the structure of a commutative monoid with $+$ being pointwise addition
and $0$ being the constant zero morphism. Then a homomorphism $N\to \Hom(M,P)$
is equivalent to a function $f\:M\times N\to P$ which is bilinear, in the sense that $f(m,n)$ is a morphism in $\CMon$ in each variable if the other is held fixed.

The functor $\Hom(M,\vbl)$ has a left adjoint $M\otimes\vbl$, where $M\otimes N$ denotes the commutative
monoid generated by symbols $m\otimes n$ ($m\in M, n\in N$) modulo the relations making $m\otimes n$ bilinear in
$m$ and $n$. Since a bilinear map from $M\times N$ is essentially the same as one from $N\times M$,
we have an isomorphism $M\otimes N\to N\otimes M$, which is given by $m\otimes n\mapsto n\otimes m$. This endows
$\CMon$ with a symmetric monoidal structure under $\otimes$ with unit $\bN$.

A semiring (for the moment not necessarily commutative) 
is defined to be a monoid object $R$ in $\CMon$ with respect to this
monoidal structure. Thus giving an object $R\in \CMon$ a semiring structure is equivalent to giving it
a second monoid structure $(R,\cdot,1)$ which is bilinear, which is to say the multiplication $\cdot$
distributes over $+$ on both sides and satisfies $x\cdot 0=0=0\cdot x$. 

Given a semiring $R$ and $M\in\CMon$, an $R$-module structure on $M$ is a left action of $R$, in the sense of
monoidal categories. (The term \emph{semimodule} is also used in the literature.) This is equivalent to an
action of the monoid $(R,\cdot,1)$ on $M$ such that $rm$ is bilinear in $r\in R$ and $m\in M$. It is also
equivalent to a homomorphism $R\to\End(M)$ of semirings, where the semiring structure on $\End(M)$ is given by
composition. The category of $R$-modules is denoted $\Mod(R)$. Observe that if
the semiring $R$ is actually a ring, then an $R$-module in our sense is equivalent to an $R$-module in the usual
sense.

Also observe that an $\bN$-module is nothing more than a commutative monoid, 
just as a $\bZ$-module is nothing more than an abelian group. 
The category of abelian groups forms a full subcategory of $\CMon$. 
It is both reflective, under the group-completion functor $M\mapsto \bZ\otimes_{\bN}M$, 
and coreflective, under the invertible-elements functor $M\mapsto \Hom_{\bN}(\bZ,M)$. 

From now on, the term semiring will mean commutative semiring. 

Tensor products in $\Mod(R)$ are defined similarly to tensor products in $\CMon$. 
For $M,N\in\Mod(R)$, the commutative monoid $\Hom_R(M,N)$ of $R$-module homomorphisms has an $R$-module 
structure given by pointwise scalar multiplication. 
Then the functor $\Hom_R(M,\vbl)$ has a left adjoint $M\otimes_R\vbl$, where $M\otimes_R N$ denotes the quotient 
of $M\otimes N$ by the relations $m\otimes rn = mr\otimes n$, for all $r\in R, m\in M,n\in N$. 
This endows $\Mod(R)$ with a symmetric monoidal structure, with unit object $R$.

A (commutative) $R$-algebra is then a commutative monoid object in this monoidal category. Equivalently, it
is a semiring $S$ together with a semiring homomorphism $R\to S$. The category of $R$-algebras is denoted $\CAlg{R}$. Observe that an $\bN$-algebra is nothing more than semiring, and a $\bZ$-algebra is nothing more than a ring. Also if $R$ is a ring, then an $R$-algebra in our sense is equivalent to an $R$-algebra in the 
usual sense. The category of rings is a full subcategory of that of semirings.

The categories $\Mod(R)$ and $\CAlg{R}$ have all limits and colimits. 
They are given by the same kinds of constructions as when $R$ is a ring, as follows.
Limits and filtered colimits agree with those
in the category of sets. Coproducts in $\Mod(R)$ are given by direct sums, and coproducts in $\CAlg{R}$ are 
given by tensor products, just as when $R$ is a ring. In particular, the free $R$-module on $I$ is 
$R^{(I)}=\bigoplus_I R$ and the free $R$-algebra on $I$ is the polynomial algebra $R[x_i\mid  i\in I]$.
Limits and colimits of abelian groups and rings agree with those computed in the larger categories $\Mod(\bN)$ and $\CAlg{\bN}$.

The theory of generators and relations works the same in these categories as in $\CMon$, subject to the evident changes. For instance, a presentation of an $R$-algebra $A$ consists of a set $I$, a family of
relations $(f_j\sim g_j)_{j\in J}$, where each $f_j$ and $g_j$ lie in the polynomial algebra  $R[x_i\mid  i\in I]$, and an isomorphism
	$$
	R[x_i\mid i \in I]/( f_j\sim g_j\mid j\in J ) \longisomap A,
	$$
where now $( f_j\sim g_j\mid j\in J )$ denotes the congruence, now in the category $\CAlg{R}$, generated by the
relations $f_j\sim g_j$. This is by definition the minimal equivalence relation on $R[x_i\mid i \in I]$ containing
the pairs $(f_j,g_j)$ which is also a sub-$R$-algebra of 
$R[x_i\mid i \in I]^2$, or equivalently a subsemiring.

We will use the following notation:
\begin{align*}
	\Rpos	&= \{x\in\bR\mid x\geq 0\} \\
	R_+ 	&= R\cap \Rpos^n, \text{ for } R\subseteq \bR^n \\
	\bN		&= \bZ_+ = \{0,1,2,\dots\} \\
	\bB		&= \bN/(1+1\sim 1) = \{0,1\}\\
	R^{\times}		&= \{x\in R\mid \exists y\in R.\; xy=1\}, \text{ for any semiring } R
\end{align*}
An element of $R^{\times}$ is called a \emph{unit} of $R$.

An \emph{ideal} of a semiring $R$ is by definition a sub-$R$-module of $R$. 
A semiring $R$ is \emph{local} if whenever a finite (possibly empty) sum $\sum_i x_i$ is a unit one of the $x_i$ is a unit, 
or equivalently that $R$ has a unique maximal (proper) ideal.
It is a \emph{semifield} if it is nonzero and every nonzero element is a unit.

A commutative monoid $M$ is \emph{additively cancellative} if the implication 
$$x+z=y+z \quad\Rightarrow\quad x=y$$ 
holds in
$R$. It is \emph{negative free} (also called \emph{zero-sum free}) if 
$$x+y=0\quad \Rightarrow\quad x=y=0$$ 
holds.

\begin{proposition}\label{pro:component cover of disjoint union}
	Consider finitely many semirings $R_1,\dots,R_n$, and let $R=R_1\times\cdots \times R_n$.
	View each $R_i$ as an $R$-algebra via the projection $R\to R_i$ onto the $i$-th factor.
	Then we have $R_i=R[1/e_i]$, where $e_i$ is the $i$-th primitive idempotent $(0,\dots,0,1,0,\dots,0)\in R$,
	and	the induced functor
		$$
		\Mod(R) \longisomap \Mod(R_1)\times\cdots\times \Mod(R_n)
		$$
	is an equivalence.
\end{proposition}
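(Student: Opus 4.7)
The plan is to transpose the classical Peirce decomposition argument, noting that each step uses only $+$ and $\cdot$, never subtraction. The orthogonal idempotents $e_1,\dots,e_n$ satisfy $e_ie_j=0$ for $i\neq j$, $e_i^2=e_i$, and $\sum_i e_i=1$, which is all the arithmetic we will need.

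I would first identify $R_i=R[1/e_i]$. Since $e_i$ is idempotent, inverting it is equivalent to imposing the congruence $e_i\sim 1$: if $e_i$ becomes a unit, then $e_i=e_i^2\cdot e_i^{-1}=e_i\cdot e_i^{-1}=1$, and conversely $e_i=1$ forces $e_i$ to be a unit. In $R/(e_i\sim 1)$ the identities $e_ie_j=0$ then force $e_j\sim 0$ for $j\neq i$, so the projection $\pi_i\colon R\to R_i$ factors through $R[1/e_i]$. The induced map $R[1/e_i]\to R_i$ is surjective by construction, and is injective because every element of $R[1/e_i]$ equals $re_i$ for some $r\in R$ (using $r=\sum_j re_j\sim re_i$), and $re_i=(0,\dots,r_i,\dots,0)\in R$ is determined by its $i$-th component $\pi_i(re_i)=r_i$.

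For the equivalence, I would define $F\colon\Mod(R)\to\prod_i\Mod(R_i)$ by $M\mapsto(e_iM)_i$, where the $R$-action on the submodule $e_iM$ kills $e_j$ for $j\neq i$ and hence factors through $R\twoheadrightarrow R_i=R[1/e_i]$. The quasi-inverse $G$ sends $(M_i)_i$ to $\bigoplus_iM_i$, with $R$ acting on the $i$-th summand through $\pi_i$. The core calculation is the Peirce-style decomposition $M\cong\bigoplus_i e_iM$: the map $m\mapsto(e_1m,\dots,e_nm)$ is injective since $m=\sum_i e_im$, and it is surjective because for $y_i=e_ix_i\in e_iM$ the element $\sum_j e_jx_j$ has $i$-th component $e_i\sum_j e_jx_j=e_i^2x_i=y_i$ by orthogonality. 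The unit of $F\dashv G$ is this decomposition, and the counit $FG\cong\id$ is immediate because $e_i\bigoplus_jM_j=M_i$.

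The one semiring-specific subtlety worth flagging---and the only place where the ring-theoretic reflex might look suspect---is the identification $\bigoplus_i e_iM=\prod_i e_iM$ used above. This holds because $\Mod(R)$ is pointed and $\CMon$-enriched, so finite products and coproducts coincide as biproducts; concretely, the canonical map $N_1\amalg\cdots\amalg N_n\to N_1\times\cdots\times N_n$ is always a bijection in $\CMon$ by direct inspection. Since no step uses additive inverses, the argument transports verbatim from the ring-theoretic setting.
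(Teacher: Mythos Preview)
Your argument is correct. The paper itself does not give a proof of this proposition; it simply declares the result straightforward and refers to lemmas 2.61 and 2.62 of Culling's thesis. Your write-up supplies exactly the standard Peirce-decomposition argument one would expect there: identify $R[1/e_i]$ with $R/(e_i\sim 1)$ via the idempotent trick, then use the orthogonal decomposition $m=\sum_i e_i m$ to exhibit the quasi-inverse. Every step is additive-multiplicative and avoids subtraction, and your remark that finite products and coproducts coincide in $\Mod(R)$ correctly handles the only place where a ring-trained reader might pause. One cosmetic point: calling the isomorphism $M\cong GF(M)$ the ``unit of $F\dashv G$'' is fine (that adjunction does hold, via base change $\dashv$ restriction and the finite biproduct identification), but since you immediately show both unit and counit are isomorphisms, you could just as well present $F$ and $G$ directly as mutually inverse without invoking the adjunction language.
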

\begin{proof}
This is straightforward. For instance, see lemmas 2.61 and 2.62 of Culling's thesis~\cite{culling2019etale}.
\end{proof}

\begin{theorem}[Golan]
	\label{thm:Golan}
	Let $A$ be a nonzero $\bB$-algebra. Then there exists a morphism $A\to\bB$.
\end{theorem}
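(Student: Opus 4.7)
The plan is to find a proper ideal $\mathfrak{p} \subsetneq A$ such that $A\setminus \mathfrak{p}$ is multiplicatively closed and $\mathfrak{p}$ is \emph{additively saturated}, meaning $a+b \in \mathfrak{p}$ implies $a,b \in \mathfrak{p}$. Given such an $\mathfrak{p}$, the function $\phi\colon A \to \bB$ sending $\mathfrak{p}$ to $0$ and its complement to $1$ is a semiring homomorphism: multiplicativity is immediate from the two closure properties, and additivity amounts to the equivalence $a+b \in \mathfrak{p} \iff a,b \in \mathfrak{p}$, whose forward direction is saturation and whose reverse direction is that ideals are closed under addition.

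Before deploying Zorn, I would record the trivial but crucial observation that every $\bB$-algebra is negative-free. Since $1+1=1$ in $A$, we have $a+a = (1+1)a = a$ for every $a$, and hence $a+b=0$ forces $a = a + (a+b) = (a+a)+b = a+b = 0$, and symmetrically $b=0$. In particular the zero ideal is additively saturated, so the poset $\mathcal{I}$ of proper additively saturated ideals of $A$ is nonempty. It is also closed under unions of chains: if $a+b$ belongs to $\bigcup_\alpha \mathfrak{p}_\alpha$, then $a+b \in \mathfrak{p}_\alpha$ for some $\alpha$, and saturation of $\mathfrak{p}_\alpha$ places $a$ and $b$ in $\mathfrak{p}_\alpha$, hence in the union. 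So Zorn's lemma produces a maximal element $\mathfrak{p} \in \mathcal{I}$.

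The crux, and the only step I expect to be nonroutine, is showing that $A\setminus\mathfrak{p}$ is multiplicatively closed. Supposing toward contradiction that $a,b \notin \mathfrak{p}$ but $ab \in \mathfrak{p}$, I would form the colon ideal $(\mathfrak{p} : b) = \{r\in A \mid br \in \mathfrak{p}\}$. It strictly contains $\mathfrak{p}$ (it contains $a$), is proper ($1 \notin (\mathfrak{p}:b)$ since $b \notin \mathfrak{p}$), and is additively saturated: if $br_1 + br_2 = b(r_1+r_2) \in \mathfrak{p}$, then saturation of $\mathfrak{p}$ gives $br_1, br_2 \in \mathfrak{p}$, so $r_1, r_2 \in (\mathfrak{p}:b)$. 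This contradicts the maximality of $\mathfrak{p}$ in $\mathcal{I}$ and completes the argument.
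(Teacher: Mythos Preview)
Your proof is correct. The paper itself does not give an argument but merely cites Golan's book~\cite{Golan:book2}, p.~99, (8.11); your self-contained Zorn's-lemma argument via maximal additively saturated ideals is the standard route to this kind of result and works as written.
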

\mpar{Prove this here?}
\begin{proof}
	\cite{Golan:book2}, p.\ 99, (8.11)
\end{proof}

\begin{definition}\label{definition: flat}
Let $R$ be an $\mathbb{N}$-algebra. An $R$-module $M$ is said to be \emph{flat} if the functor $M\otimes_R-:\Mod(R) \to \Mod(R)$  preserves all finite limits. 
\end{definition}

We note that this is equivalent to preserving equalizers of pairs. Indeed, preservation of all finite limits is equivalent to the preservation of all finite products and equalizers of pairs. But finite products are always preserved, since they are also finite sums.

\begin{proposition}\label{pro:flat-tensor-preserve-injectivity}
	Let $M\to N$ be an injective homomorphism of $R$-modules, and let $P$ be a flat $R$-module. Then the induced
	map $P\otimes_R M \to P\otimes_R N$ is injective.
\end{proposition}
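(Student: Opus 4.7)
My plan is to reduce injectivity to a categorical property that tensoring with a flat module manifestly preserves. The key observation is that a homomorphism $f\colon M \to N$ in $\Mod(R)$ is injective if and only if it is a monomorphism in $\Mod(R)$. This holds because, as noted earlier in the paper, limits in $\Mod(R)$ agree with limits in sets, so the set-theoretic characterization of injectivity carries over. Moreover, monomorphisms admit a purely categorical description: for any $f$, the canonical diagonal $\Delta_f\colon M \to M \times_N M$ induced by the pair $(\id_M, \id_M)$ is always a split mono (with either projection as retraction), and $f$ is itself a monomorphism if and only if $\Delta_f$ is an isomorphism. Elementwise this is transparent: $M \times_N M = \{(m_1,m_2) : f(m_1)=f(m_2)\}$, and $\Delta_f$ is surjective precisely when $f$ separates points.

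Next I would use flatness. Since $M \times_N M$ is a finite limit (a pullback, or equivalently the equalizer of the two maps $M \oplus M \rightrightarrows N$), the functor $P \otimes_R -$ preserves it. Hence the canonical map
$$
	P \otimes_R (M \times_N M) \longrightarrow (P \otimes_R M) \times_{P \otimes_R N} (P \otimes_R M)
$$
is an isomorphism, and under this identification $\id_P \otimes \Delta_f$ corresponds to the diagonal $\Delta_{P\otimes_R f}$ of the tensored homomorphism $P\otimes_R M \to P\otimes_R N$.

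Since $f$ is injective, $\Delta_f$ is an isomorphism; tensoring preserves isomorphisms, so $\Delta_{P\otimes_R f}$ is an isomorphism, so $P\otimes_R f$ is a monomorphism in $\Mod(R)$, hence injective. The argument is entirely formal once the pullback characterization of monomorphisms is in hand, so I do not anticipate a real obstacle. The only thing I would take care to avoid is any argument phrased in terms of the kernel $\{m : f(m)=0\}$, since in the absence of additive inverses this set does not detect failure of injectivity; working directly with the pullback $M \times_N M$ sidesteps the issue completely.
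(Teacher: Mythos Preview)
Your proposal is correct and follows essentially the same approach as the paper's proof: characterize injectivity via the diagonal $M\to M\times_N M$ being an isomorphism, then use that flat tensor preserves the finite limit $M\times_N M$. The paper compresses this into two sentences, while you spell out the identification $P\otimes_R(M\times_N M)\cong (P\otimes_R M)\times_{P\otimes_R N}(P\otimes_R M)$ and the useful cautionary remark about kernels, but the underlying idea is identical.
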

\begin{proof}
	The map $M\to N$ is injective if and only if the diagonal map $M\to M\times_N M$ is an isomorphism. 
	This property is preserved by tensoring with a flat module, since the right-hand side is a finite limit.
\end{proof}

At the time of this writing, we do not know whether a module $M$ is flat if and only if $M\otimes_R\vbl$ preserves injectivity.

\begin{proposition}\label{pro:neg-free-and-canc-flat-local}
	If a semiring $R$ is additively cancellative (resp.\ negative free), then so is any flat $R$-module.
\end{proposition}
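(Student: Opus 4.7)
\emph{Plan.} The strategy is to translate each hypothesis into a categorical condition on a canonical $R$-linear map and then transport it across $M\otimes_R-$ using flatness. For additive cancellativity I will use an injectivity condition; for negative freeness I will use an equalizer condition. In both cases the structure maps involve only $+$ and $0$, so the relevant $R$-module maps are available for free.

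For the cancellative case, let $\phi\colon R\oplus R\to R\oplus R$ be the $R$-module homomorphism $(a,c)\mapsto (a+c,c)$. The first step is to observe that $R$ is additively cancellative if and only if $\phi$ is injective: from $\phi(a,c)=\phi(a',c')$ one gets $c=c'$ and $a+c=a'+c$, and cancellativity gives $a=a'$; the converse is immediate. Next I would apply proposition~\ref{pro:flat-tensor-preserve-injectivity} to conclude that $\id_M\otimes\phi$ is injective. Finally, under the canonical identification $M\otimes_R(R\oplus R)\cong M\oplus M$, this map becomes $(m_1,m_2)\mapsto (m_1+m_2,m_2)$, so its injectivity is precisely additive cancellativity of $M$.

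For the negative-free case I would work with equalizers rather than injections. The equalizer in $\Mod(R)$ of the addition map $\sigma\colon R\oplus R\to R$ and the zero map is the sub-$R$-module $\{(a,b)\in R\oplus R\mid a+b=0\}$, which equals the zero object exactly when $R$ is negative free. Since $M$ is flat, the functor $M\otimes_R-$ preserves this finite limit. Hence the equalizer of the addition map $M\oplus M\to M$ and the zero map equals $M\otimes_R 0=0$, which is precisely the statement that $M$ is negative free.

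I do not foresee a genuine obstacle: the substance of the proof is the observation that each hypothesis admits a formulation in terms of the categorical structure that flatness is designed to preserve. The only routine checks are the identification of $\id_M\otimes\phi$ with $(m_1,m_2)\mapsto(m_1+m_2,m_2)$ under the standard isomorphism $M\otimes_R(R\oplus R)\cong M\oplus M$, and the verification that $\{(a,b)\mid a+b=0\}$ really is a sub-$R$-module of $R\oplus R$, both of which are immediate from the definitions.
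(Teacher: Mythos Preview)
Your proof is correct and follows essentially the same approach as the paper: expressing each property as a finite-limit condition in $\Mod(R)$ and then invoking that $M\otimes_R-$ preserves finite limits. The paper's proof is a one-line citation to~\cite{borger2016witt} for the finite-limit characterizations, whereas you have spelled them out explicitly (using proposition~\ref{pro:flat-tensor-preserve-injectivity} for cancellativity and a direct equalizer computation for negative freeness), but the underlying idea is identical.
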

\begin{proof}
	Both properties can be expressed in terms of finite limits. See~\cite{borger2016witt}, (2.9) and (2.12).
\end{proof}

\section{Equational criterion for flatness}
\label{section: equational crierion for flatness}

This section is only used in the proof of~(\ref{proposition: f.pres flats are projective}).
But we believe it has some independent interest. So we include a developed form of it here.
It was first shown by Katsov~\cite{Katsov:flat-semimodules} that the equational criterion holds over semirings.

\subsection{Relations in modules}
Let $R$ be an $\bN$-algebra. Fix $m\geq 0$ and a pair $(r,s)\in R^m\times R^m$.
Suppose some elements $x_1,\dots,x_m$ in an $R$-module $M$ satisfy
\begin{equation}
	\label{eq:abstract-relation}
	\sum_{i=1}^m r_i x_i = \sum_{i=1}^m s_i x_i.
\end{equation}
Such a relation is said to be of \emph{type} $(r,s)$. We will call it \emph{rectifiable} if there exist $n\geq 0$ and $y_1,\dots,y_n\in M$ such that each
$x_i$ can be written
\begin{equation}
	\label{eq:a-matrix}
	x_i=\sum_{j=1}^n a_{ij}y_j
\end{equation}
where $a_{ij}\in R$ such that for all $j$, we have
\begin{equation}
	\label{eq:coord-rel}
	\sum_{i=1}^m r_i a_{ij} = \sum_{i=1}^m s_i a_{ij}.
\end{equation}
The reason for the name is that one can interpret rectifiability as
the existence of a coordinate system (generally not free) with respect to which the relation comes from
a relation on coordinates.

\begin{proposition}\label{pro:rect-interp}
The relation \eqref{eq:abstract-relation} is rectifiable if and only if the map $R^m\to M$ sending $e_i\mapsto x_i$ factors through a free module $R^n$ such that the image of $(r,s)$ under the induced map $R^m\times_M R^m\to R^n\times_M R^n$ is a redundant relation, that is, it is contained in the diagonal.
\end{proposition}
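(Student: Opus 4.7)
The plan is to unpack both sides of the equivalence and recognize that they are two ways of packaging the same data: a choice of elements $y_1,\dots,y_n \in M$ together with a matrix $(a_{ij})$ expressing each $x_i$ as an $R$-linear combination of the $y_j$.

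First I would set up notation. Let $\phi\: R^m \to M$ be the homomorphism sending the standard basis vector $e_i$ to $x_i$. The pair $(r,s)$ lies in the fiber product $R^m \times_M R^m$ precisely because of the hypothesis $\sum r_i x_i = \sum s_i x_i$ in $M$. A factorization through a free module is a pair of maps $\phi = \pi \circ \alpha$ with $\alpha\: R^m \to R^n$ and $\pi\: R^n \to M$. Since $R^n$ is free, $\pi$ is determined by $y_j \dfn \pi(f_j)$ for $j=1,\dots,n$, and $\alpha$ is determined by the matrix $(a_{ij})$ via $\alpha(e_i) = \sum_j a_{ij} f_j$.

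Second, I would check that the composite relation $\phi = \pi\circ\alpha$ is exactly equation~\eqref{eq:a-matrix}, namely $x_i = \sum_j a_{ij} y_j$. Then I would compute the image of $(r,s)$ under the induced map $R^m\times_M R^m \to R^n\times_M R^n$: the image of $r=\sum_i r_i e_i$ under $\alpha$ is the vector whose $j$-th coordinate is $\sum_i r_i a_{ij}$, and similarly for $s$. Saying this image lies in the diagonal of $R^n\times R^n$ (i.e.\ is a ``redundant relation'') is exactly the equality $\sum_i r_i a_{ij} = \sum_i s_i a_{ij}$ for every $j$, which is equation~\eqref{eq:coord-rel}.

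Finally I would assemble both directions: given rectifiability data $(y_j, a_{ij})$, define $\pi$ and $\alpha$ by the above formulas to produce the factorization; given a factorization through $R^n$ with redundant image, read off $y_j$ and $a_{ij}$ from $\pi$ and $\alpha$. The two procedures are mutually inverse by construction. There is no real obstacle here; the content of the proposition is purely definitional translation, and the only care needed is to verify that the diagonal condition on the image of $(r,s)$ in $R^n\times_M R^n$ is equivalent to literal equality in $R^n$, which holds because the diagonal $R^n \hookrightarrow R^n\times R^n$ is a monomorphism and hence the intersection with $R^n\times_M R^n$ is just equality of the two coordinates in $R^n$.
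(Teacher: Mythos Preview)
Your proof is correct and follows the same approach as the paper's: both unpack the factorization $R^m\to R^n\to M$ into the data $(y_j,a_{ij})$ satisfying~\eqref{eq:a-matrix}, and then identify the diagonal condition on the image of $(r,s)$ with~\eqref{eq:coord-rel}. The paper's version is just a two-sentence summary of what you have written out in full.
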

\begin{proof}
The requirement that it factors through $R^n$ is equivalent to the existence of $y_j$ and $a_{ij}$ satisfying \eqref{eq:a-matrix}. Given this, \eqref{eq:coord-rel} is equivalent to the image of $(r,s)$ lying in the diagonal.
\end{proof}

\begin{proposition}\label{pro:rect-rel}
If the functor $M\tn_R\vbl$ takes the equalizer diagram
\begin{equation}	\label{eq:E_rs}
\begin{tikzcd}[column sep=1.2cm]
E_{r,s} \arrow[r] 
	& R^m \arrow[r,shift left,"e_i\mapsto r_ie_i"] \arrow[r,swap,shift right,"e_i\mapsto s_ie_i"] 
	& R
\end{tikzcd}
\end{equation}
to an equalizer diagram, then all relations in $M$ of type $(r,s)$ are rectifiable.
\end{proposition}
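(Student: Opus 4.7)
The plan is to unwind the hypothesis and match it directly against the characterization of rectifiability given in Proposition~\ref{pro:rect-interp}. The proof is essentially a translation exercise, and I expect no real obstacle: the only substantive input beyond symbol-pushing is that elements of a tensor product are finite sums of simple tensors, which is what produces the finite index set of $y_j$'s needed for rectification.

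First, I identify the relevant pieces after tensoring. Since $M\otimes_R R^m=M^m$ and $M\otimes_R R=M$ canonically, applying $M\otimes_R\vbl$ to the diagram \eqref{eq:E_rs} yields a diagram of the form
$$
\begin{tikzcd}[column sep=1.2cm]
M\otimes_R E_{r,s} \arrow[r]
  & M^m \arrow[r,shift left,"{(x_i)\mapsto\sum r_ix_i}"] \arrow[r,shift right,swap,"{(x_i)\mapsto\sum s_ix_i}"]
  & M,
\end{tikzcd}
$$
because tensoring commutes with the direct sum decomposition $R^m=\bigoplus_i R\cdot e_i$ and sends the generator $r_ie_i\in R$ to the corresponding map of coordinates. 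By hypothesis this is an equalizer, so $M\otimes_R E_{r,s}$ is canonically identified with the set of relations of type $(r,s)$ in $M$.

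Now suppose we are given such a relation, i.e.\ elements $x_1,\dots,x_m\in M$ with $\sum_i r_ix_i=\sum_i s_ix_i$. By the identification above, $(x_1,\dots,x_m)$ corresponds to an element of $M\otimes_R E_{r,s}$, which may be written as a finite sum of simple tensors $\sum_{j=1}^n y_j\otimes e_j$ with $y_j\in M$ and $e_j\in E_{r,s}$. Write each $e_j=(a_{1j},\dots,a_{mj})\in R^m$; then membership $e_j\in E_{r,s}$ is precisely the condition $\sum_i r_ia_{ij}=\sum_i s_ia_{ij}$ of \eqref{eq:coord-rel}.

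Finally, pushing $\sum_j y_j\otimes e_j$ forward along $M\otimes_R E_{r,s}\to M\otimes_R R^m=M^m$ gives the equality $x_i=\sum_j a_{ij}y_j$ in each coordinate, which is \eqref{eq:a-matrix}. Together with the preceding paragraph this exhibits the data $(y_j,a_{ij})$ required by the definition of rectifiability, completing the proof.
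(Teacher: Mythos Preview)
Your proof is correct and follows essentially the same approach as the paper's: tensor the equalizer, lift the relation vector $(x_1,\dots,x_m)$ to an element of $M\otimes_R E_{r,s}$, express it as a finite sum of simple tensors, and read off the rectification data. Your version simply spells out more explicitly why the tensored parallel pair is $(x_i)\mapsto\sum r_ix_i$ and $(x_i)\mapsto\sum s_ix_i$, and matches the resulting equations to \eqref{eq:a-matrix} and \eqref{eq:coord-rel}.
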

\begin{proof}
We have an equalizer diagram
\begin{equation}	
	\begin{tikzcd}[column sep=1.2cm]
		M\otimes_RE_{r,s} \arrow[r] & M^m \arrow[r,shift left,"e_i\mapsto r_ie_i"] \arrow[r,swap,shift right,"e_i\mapsto s_ie_i"] & M.
	\end{tikzcd}
\end{equation}	
Therefore if $x_1,\dots,x_m\in M$ satisfy \eqref{eq:abstract-relation}, then the vector 
	$(x_1,\dots,x_m)$ is in the equalizer displayed. So there are elements $y_1,\dots, y_n\in M$ and 
	$a_1,\dots,a_n\in E_{r,s}$ such that $\sum_j y_j a_j = (x_1,\dots,x_m)$. This is exactly what it 
	means for the relation to be rectifiable.
\end{proof}

\subsection{Filtered colimits}\label{subsection: filtered colimits}
For any $R$-module $M$, let $FP_M$ denote the full subcategory of modules mapping to $M$ spanned
by the finitely presented modules. Then $FP_M$ is a filtered category, and its colimit in the
category of $R$-modules is $M$. These are general facts from category theory.

\begin{theorem}[Lazard's theorem for semirings]\label{thm: equivalent conditions for flatness}
	The following are equivalent.
	\begin{enumerate}
		\item 
Any map $N\to M$ from a finitely presented $R$-module $N$ factors through a finite free $R$-module,
		\item 
$FP_M$ has a cofinal family of finite free $R$-modules,
		\item 
$M$ is isomorphic to a filtered colimit of free $R$-modules,
		\item 
$M$ is flat,
		\item 
all relations in $M$ are rectifiable.
	\end{enumerate}
\end{theorem}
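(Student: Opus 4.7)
The plan is to establish the cyclic chain of implications
$(1) \Leftrightarrow (2) \Rightarrow (3) \Rightarrow (4) \Rightarrow (5) \Rightarrow (1)$, using
Proposition~\ref{pro:rect-rel} for $(4) \Rightarrow (5)$ and an induction on the number of relations
for the main step $(5) \Rightarrow (1)$.

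The equivalence $(1) \Leftrightarrow (2)$ is essentially the definition of cofinality applied to finite free
objects of $FP_M$. Given $(2)$, one restricts the canonical presentation $M = \colim_{FP_M} N$ recalled in
section~\ref{subsection: filtered colimits} to the cofinal subcategory of finite frees, which is
still filtered with colimit $M$, giving $(3)$. For $(3) \Rightarrow (4)$, writing $M=\colim_i F_i$
with each $F_i$ free reduces the preservation of finite limits by $M\otimes_R-$ to: (a) preservation
by each $F_i\otimes_R-\cong (-)^{(I_i)}$, which can be checked coordinatewise, and (b) the commutativity
of filtered colimits with finite limits in $\Mod(R)$, which follows from the same fact in $\sets$ together
with the observation that both constructions are computed on underlying sets. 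For $(4) \Rightarrow (5)$,
flatness makes $M\otimes_R-$ preserve the equalizer diagram~\eqref{eq:E_rs} for every type $(r,s)$, so
Proposition~\ref{pro:rect-rel} applies directly.

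The heart of the argument is $(5) \Rightarrow (1)$, proved by induction on the number $K$ of defining
relations of $N$. Fix a presentation $N\cong R^m/(f_k\sim g_k\mid k=1,\dots,K)$, write
$f_k=\sum_i r^{(k)}_i e_i$ and $g_k=\sum_i s^{(k)}_i e_i$, and let $x_i\in M$ denote the images of
the generators under the given map $N\to M$. The base case $K=0$ is immediate since $N$ is already free.
For the inductive step, apply rectifiability to the $K$-th relation to produce $y_1,\dots,y_n\in M$ and
$a_{ij}\in R$ with $x_i=\sum_j a_{ij}y_j$ and $\sum_i r^{(K)}_i a_{ij}=\sum_i s^{(K)}_i a_{ij}$ for each
$j$. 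Substituting $x_i=\sum_j a_{ij}y_j$ into the remaining relations yields $K-1$ relations among the
$y_j$ (with coefficients $\sum_i r^{(k)}_i a_{ij}$ and $\sum_i s^{(k)}_i a_{ij}$), and these continue to
hold in $M$ since the original relations did. They present a new finitely presented module $N'$ together
with a map $N'\to M$. By induction, $N'\to M$ factors through a finite free $R^p$, and composing with
$R^m\to R^n$ produces a map $R^m\to R^p\to M$ under which the image of each $f_k$ equals that of $g_k$
(for $k=K$ already at the $R^n$ stage by rectification, for $k<K$ by construction of $N'$ and its
factorization), so the composite descends to the required factorization $N\to R^p\to M$.

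The main obstacle will be the bookkeeping in the induction for $(5) \Rightarrow (1)$: one must carefully
track how the earlier relations transform under each rectification step and verify that the transformed
relations still hold in $M$ so that the inductive hypothesis applies to $N'$. The remaining implications
are either formal consequences of category-theoretic generalities or handed to us by
Proposition~\ref{pro:rect-rel}.
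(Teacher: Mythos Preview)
Your proposal is correct and follows essentially the same route as the paper. The paper dispatches $(1)\Rightarrow(2)\Rightarrow(3)\Rightarrow(4)$ with the word ``Clear'', invokes Proposition~\ref{pro:rect-rel} for $(4)\Rightarrow(5)$, and proves $(5)\Rightarrow(1)$ by the same induction on the number of relations: it rectifies the last relation to produce a map $R^m\to R^n$ under which that relation becomes redundant, then defines $P$ as the coequalizer of the transported pair $R^p\rightrightarrows R^n$ (your $N'$), which now admits a presentation with one fewer relation. Your explicit coefficient bookkeeping and the paper's diagrammatic formulation via Proposition~\ref{pro:rect-interp} are the same argument in different notation.
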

\begin{proof}
$(1) \Rightarrow (2) \Rightarrow (3) \Rightarrow (4)$: Clear.
	
$(4)\Rightarrow(5)$: This follows from~(\ref{pro:rect-rel}).
	
$(5)\Rightarrow (1)$: We will show there is a free object $P\in FP_M$ and a map $N\to P$ in $FP_M$. Let $R^p\rightrightarrows R^m\to N$ be a presentation. If $p=0$, we can take $P=N$. If $p\geq 1$, it is enough by induction to show we can find such a $P\in FP_M$ which may not be free but does admit a finite presentation with $p-1$ relations.
	
Let $(r,s)\in R^m\times_N R^m$ be the $p$-th relation in our presentation of $N$. The image of $(r,s)$ in $R^m\times_M R^m$, viewed as a relation between elements of $M$, is rectifiable by hypothesis. Therefore by~(\ref{pro:rect-interp}), there exists a commutative diagram of solid arrows
	$$
	\xymatrix{
		R^p  \ar@<0.5ex>[r]\ar@<-0.5ex>[r] \ar@{=}[d]
		& R^m \ar[d] \ar@{->>}[r] & N \ar@{-->}[d]\ar[ddr] \\
		R^p  \ar@<0.5ex>[r]\ar@<-0.5ex>[r]& R^n \ar@{-->>}[r]\ar[drr] & P \ar@{-->}[dr] \\
		& & & M
	}
	$$
	such that the relation $(r,s)\in R^m\times_M R^m$ maps to the diagonal in $R^n\times_M R^n$.
	Now let $P$ denote the coequalizer of the
	two maps $R^p\rightrightarrows R^n$. Then the map $N\to M$ factors (uniquely) through $P$ in
	such a way that the diagram above including the dashed arrows commutes. Finally since the
	$p$-th relation in the presentation of $P$ is redundant, it has a presentation with
	$p-1$ relations.
\end{proof}

\begin{corollary}[Equational criterion for flatness]\label{cor:equational-criterion}
An $R$-module $M$ is flat if and only if whenever we have
	$$rx=sx,$$ 
with matrices $r,s\in R^{1\times m}$ and $x\in M^{m}$, 	
there exist matrices $a\in R^{m\times n}$, and $y\in M^{n}$, for some $n\geq 0$, such that 
	$$
	x=ay, \quad ra=sa.
	$$
\end{corollary}

\begin{remark}
	For a module to be flat, it is not enough for all relations in a particular presentation to be
	rectifiable. For instance, the $\bZ$-module $(\bZ x\oplus\bZ y)/(x,x+2y)$ is not flat, but
	the relations $x=0$ and $x+2y=0$ are both rectifiable. In other words, rectifiable relations
	can imply non-rectifiable ones. In this case, the relation $2(x+y)=0$ is non-rectifiable.
\end{remark}

\begin{example} \label{eg:square-cone}
	The square cone is not flat over $\Rpos$.
	More precisely, let $M$ be the sub-$\Rpos$-module of $\bR^3$
	generated by the vectors 
	\begin{align*}
		x_1 &= (+1,0,1) \\
		x_2 &= (0,+1,1) \\
		x_3 &= (0,-1,1) \\
		x_4 &= (-1,0,1).
	\end{align*}
	Then $M$ is not a flat $\Rpos$-module. 
	(This is generalized in~(\ref{thm:fg-projective-Rplus-modules-are-free2}) below.)
	Indeed, suppose it were.
	Then by the equational criterion applied to the relation
		$$
		x_1+x_4=x_2+x_3,
		$$ 
	there would be elements $y_j\in M$ and $a_{ij}\in \Rpos$ such that for each $i$
	\begin{equation} \label{eq: equ criterion example 2}
		x_i = \sum_j a_{ij} y_j		
	\end{equation}
	and for each $j$
	\begin{equation}\label{eq: equ criterion example}
		a_{1j}+a_{4j} = a_{2j}+a_{3j}.		
	\end{equation}
	Now, first observe that we may assume all the $y_j$ are nonzero, for otherwise we could remove them from the 
	list and the equations above would still hold. Then 
	since the elements $x_i$ are on the boundary of $M$, they are indecomposable up to multiplication by
	scalars.
	So the relation (\ref{eq: equ criterion example 2}) implies that for each $i$ and $j$, either $a_{ij}=0$
	or $y_j=\lambda_{ij}x_i$ for some $\lambda_{ij}\in\bR_{>0}$.
	Then, since none of the $x_i$ is a multiple of any of the others, for any given $j$, at most one of the 
	the $a_{ij}$ is nonzero. 
	
	But the equation (\ref{eq: equ criterion example}) then implies that all $a_{ij}$ are zero.
	It follows that each $x_i$ is zero, which is a contradiction.
\end{example}

\section{Faithfully flat descent}
\label{section: faithfully flat descent}
In this section, we set up some formal descent theory over semirings. 
It is essentially the same as the usual theory over rings.
Any reader so inclined is encouraged to skip ahead to the following sections and refer back as necessary.

An $R$-module $S$ is \emph{faithful} if for any map $\varphi:M\to N$ of $R$-modules, $\varphi$ is an isomorphism whenever the induced map $\varphi_S:S\otimes_RM \to S\otimes_RN$ is an isomorphism. A morphism $R\to S$ of semirings is \emph{faithfully flat} if $S$ is flat and faithful as an $R$-module. We recall the following (see \cite{Toen-Vaquie:Under-Spec-Z}\cite{borger2016witt}):

\begin{theorem}[Faithfully flat descent]\label{thm:faithfully-flat-descent}
Let $S$ be a faithfully flat $R$-algebra. Then the base-change functor $S\otimes_R\vbl\:\Mod(R)\to\Mod(S)$ is comonadic.
\end{theorem}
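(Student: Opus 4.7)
The plan is to apply Beck's comonadicity theorem to the adjunction $F \dashv U$, where $F = S \otimes_R \vbl : \Mod(R) \to \Mod(S)$ is the base-change functor and $U : \Mod(S) \to \Mod(R)$ is the restriction of scalars along the semiring map $R \to S$. In the form we need it, Beck's theorem says that $F$ is comonadic provided (i) $F$ has a right adjoint, (ii) $F$ is conservative, and (iii) $\Mod(R)$ has equalizers of $F$-split pairs and $F$ preserves them.

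Condition (i) is immediate: the right adjoint $U$ is restriction of scalars, constructed in exactly the same way as for rings via the enriched monoidal structure on $\Mod(R)$ from \S\ref{section: preliminaries}. Condition (ii) is literally the definition of $S$ being a faithful $R$-module, since to say $F(\varphi) = \id_S \otimes_R \varphi$ is an isomorphism exactly when $\varphi$ is, is to say that $F$ reflects isomorphisms. Condition (iii) follows from flatness of $S$: by Definition~\ref{definition: flat}, the functor $F = S \otimes_R \vbl$ preserves all finite limits; in particular it preserves all equalizers, and since $\Mod(R)$ is complete, all equalizers exist (not just $F$-split ones).

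The main obstacle is, in substance, none: the argument reduces to a purely category-theoretic invocation of Beck's theorem. The only thing to verify is that nothing about the semiring setting invalidates the standard argument, but since $\Mod(R)$ and $\Mod(S)$ are complete and cocomplete, the base-change/restriction-of-scalars adjunction is constructed identically to the ring case, and the comonad $FU = S \otimes_R U(\vbl)$ has the usual structure maps coming from the unit $R \to S$ and the multiplication $S \otimes_R S \to S$.

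As an alternative (more concrete) route, one could unwind the comonad and identify its coalgebras with descent data: an $S$-module $M$ together with an isomorphism $\theta : S \otimes_R M \isomap M \otimes_R S$ satisfying the cocycle condition. The quasi-inverse to $F$ then sends $(M,\theta)$ to the equalizer of the two evident maps $M \rightrightarrows S \otimes_R M$; flatness of $S$ ensures this equalizer commutes with $S \otimes_R \vbl$, giving a natural descent isomorphism, while faithfulness is what forces this construction to recover every $R$-module up to isomorphism. This gives the same theorem, essentially by rewriting Beck's proof, and is the form typically cited from~\cite{Toen-Vaquie:Under-Spec-Z} and~\cite{borger2016witt}.
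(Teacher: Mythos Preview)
Your proof is correct and takes essentially the same approach as the paper: both invoke Beck's (co)monadicity theorem, verifying that $F=S\otimes_R\vbl$ has a right adjoint (restriction of scalars), reflects isomorphisms (faithfulness), and preserves equalizers (flatness). Your additional concrete description via descent data is a nice supplement but not needed for the argument.
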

\begin{proof}
This follows straight from Beck's Monadicity Theorem. (See Borceux~\cite{Borceux:Handbook.v2}, theorem 4.4.4, for example.)
Let $F$ denote the functor $S\otimes_R\vbl$.
First, $F$ has a right adjoint, namely the forgetful functor. 
Second, $F$ reflects isomorphisms by assumption, because $S$ is faithful.
Third, equalizers of all pairs exist in $\Mod(R)$ and $F$ preserves them, because $S$ is flat.
Therefore by Beck's Theorem, $F$ is comonadic.
\end{proof}

\begin{remark}
	\label{rmk:fpqc-topology}
Let $\Aff_{R}=\CAlg{R}^{\op}$. 
For any $R$-algebra $A$, let $\Spec(A)$ denote the object of $\Aff_R$ corresponding to $A$.
For any semiring $R$, we can then define the fpqc topology on $\Aff_{R}$ to be the one generated
by finite families $(\Spec(A_i)\to\Spec(A))_{i\in I}$, 
where $\prod_{i\in I} A_i$ is a faithfully flat $A$-algebra. 
(This agrees with the usual one over rings, as defined 
in~\cite[\href{https://stacks.math.columbia.edu/tag/022A}{Tag 022A}]{stacks-project} say.)
Then by faithfully flat descent, 
the categories $\Mod(A)$ as $A$ varies over $\CAlg{R}$ form a stack $\mathsf{Mod}_R$ in the fpqc topology. 
It could also called the stack of quasi-coherent sheaves and denoted $\mathsf{QCoh}_R$, which  
is perhaps more common when $R$ is a ring.

For formal reasons, many of the results in this paper can be extended to nonaffine schemes over semirings.
We hope to return to this in a future paper.
\end{remark}

\begin{definition}\label{definition: pure morphism}
	A morphism $\alpha:R \to S$ of semirings is said to be \emph{pure} if for any $R$-module $M$, 
	the following is an equalizer diagram in $\Mod(R)$:
	\begin{equation}\label{eq: equalizer pure}
		\begin{tikzcd}[row sep=large, column sep=1.5cm]
			M \arrow[r,"\alpha_M"]&
			S\otimes_RM \arrow[r, shift left,"\alpha_S \otimes_R 1_M"] \arrow[r,swap, shift right,"1_S \otimes_R \alpha_M"]
			& S \otimes_R S \otimes_R M
		\end{tikzcd}
	\end{equation}
	where $\alpha_N$ denotes, for any $R$-module $N$, the map $N\to S\otimes_R N$ given by $n\mapsto 1\otimes n$.
\end{definition}

\begin{remark}
For rings,~(\ref{definition: pure morphism}) is equivalent to $\alpha$ being faithful. See \cite{andre2022canonical}, proposition 2.2.
\end{remark}

Let $\alpha: R \to S$ be a morphism of semirings, and $M$ be an $S$-module. We will consider $S \otimes_RM$ and $M\otimes_R S$ with their evident $S\otimes_RS$-module structures. As over rings, we define the following. 

\begin{definition}
	A \emph{descent datum} on $M$ is an $S\otimes_RS$-linear isomorphism 
	\[
	\phi: M \otimes_R S \to S\otimes_R M
	\]
	such that $\phi_1\phi_3=\phi_2$ and $\lambda_M\theta_M =1_M$, where
	\begin{enumerate}
		\item 
		$\lambda_M: S \otimes_RM \to M$ is the external multiplication map $s\otimes m\mapsto sm$.
		\item 
		$\theta_M: M \to S \otimes_R M$ is given by $m \mapsto \phi(m \otimes 1_S)$ and
		$\phi_i$ is the morphism obtained by tensoring $\phi$ with $\id_S$ in the $i$-th position for $i \in \{1,2,3\}$. For example, if $\phi(m\otimes s) = \sum_j s_j \otimes m_j$, then $\phi_2(m\otimes t\otimes s) = \sum_j s_j \otimes t \otimes m_j$. 
	\end{enumerate}
\end{definition}

Let $DD(\alpha)$ be the category of descent data. Note that the $S$-module $M_S:=S\otimes_R M$ for an $R$-module $M$ is equipped with a natural descent datum. We write $C_\alpha(M)$ for the corresponding object of $DD(\alpha)$. Therefore we have the following commutative diagram:
\begin{equation}\label{eq: descent comparison diagram}
	\begin{tikzcd}
		\Mod(R) \arrow{rr}{S\otimes_R-}
		\arrow{rd}[swap]{C_\alpha}
		&~
		&\Mod(S) \\
		&DD(\alpha)\arrow{ru}[swap]{\mathcal{F}}
	\end{tikzcd}
\end{equation}
where $\mathcal{F}$ is the forgetful functor. 

\begin{definition}
The homomorphism $\alpha\:R\to S$ is said to be a \emph{descent} morphism if the functor $C_\alpha$ of (\ref{eq: descent comparison diagram}) is fully faithful and an \emph{effective descent} morphism if $C_\alpha$ is an equivalence of categories. 
\end{definition}

\begin{lemma}\label{lemma: DD(a) split equalizer}
	Let $\alpha: R \to S$ be a morphism of semirings and $(M,\phi)$ an object in $DD(\alpha)$. Then the following diagram is a split equalizer:
	\begin{equation}\label{eq: split equalizer}
		\begin{tikzcd}[row sep=large, column sep=1.5cm]
			M \arrow[r,"\theta_M"]&
			S\otimes_RM \arrow[r, shift left,"1_S \otimes \alpha_M"] \arrow[r,swap, shift right,"1_S\otimes\theta_M"]
			& S \otimes_R S \otimes_R M
		\end{tikzcd}
	\end{equation}
\end{lemma}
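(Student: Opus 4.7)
The plan is to promote this to a \emph{split} equalizer, which is automatically absolute and in particular an equalizer in $\Mod(R)$. A split-equalizer structure on the diagram \eqref{eq: split equalizer} consists of morphisms $s\colon S\otimes_R M\to M$ and $t\colon S\otimes_R S\otimes_R M\to S\otimes_R M$ satisfying four identities:
$(1_S\otimes\alpha_M)\theta_M = (1_S\otimes\theta_M)\theta_M$, $s\theta_M = 1_M$, $t(1_S\otimes\alpha_M) = 1_{S\otimes_R M}$, and $t(1_S\otimes\theta_M) = \theta_M s$.

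The natural candidates are the two multiplication maps: take $s \dfn \lambda_M$ and $t \dfn \mu_S\otimes 1_M$, where $\mu_S\colon S\otimes_R S\to S$ is the multiplication of $S$. With these choices, $s\theta_M = 1_M$ is the descent-datum axiom $\lambda_M\theta_M = 1_M$ verbatim, and $t(1_S\otimes\alpha_M) = 1_{S\otimes_R M}$ is immediate because $\mu_S$ is unital.

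The remaining two identities encode the remaining descent-datum axioms. For $t(1_S\otimes\theta_M) = \theta_M\lambda_M$: writing $\phi(m\otimes 1) = \sum_j s_j\otimes m_j$, the left-hand side sends $s\otimes m$ to $\sum_j ss_j\otimes m_j$, while the right-hand side sends it to $\phi(sm\otimes 1) = \phi\bigl((s\otimes 1)\cdot(m\otimes 1)\bigr) = (s\otimes 1)\cdot\phi(m\otimes 1) = \sum_j ss_j\otimes m_j$, the middle equality being the $S\otimes_RS$-linearity of $\phi$. The parallel-composite identity $(1_S\otimes\alpha_M)\theta_M = (1_S\otimes\theta_M)\theta_M$ unfolds, by inspection of the definitions of the $\phi_i$, into $\phi_2(m\otimes 1\otimes 1) = \phi_1\phi_3(m\otimes 1\otimes 1)$, which is an instance of the cocycle condition $\phi_2 = \phi_1\phi_3$.

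There is no substantive obstacle; the argument is a matching exercise in which each of the four split-equalizer equations is pinned down to exactly one descent-datum axiom. The only care required is bookkeeping on the three-fold tensor $S\otimes_R S\otimes_R M$ to track which tensor slot each map acts on.
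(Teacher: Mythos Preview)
Your proof is correct and follows exactly the same approach as the paper's: the same splitting maps $s=\lambda_M$ and $t=\mu_S\otimes 1_M$, and the same four identities verified against the same descent-datum axioms. If anything, you give more detail on the cocycle and $S\otimes_RS$-linearity computations than the paper does.
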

\begin{proof}
	We claim that \eqref{eq: split equalizer} is a split equalizer with splittings $\lambda_M:S\otimes_RM \to M$ (the external multiplication on $M$) and $\mu_S\otimes 1_M:S\otimes_R S \otimes_R M \to S\otimes_R M$, where $\mu_S$ is the multiplication of $S$. In other words, we check the following:
	\begin{enumerate}
		\item 
		$(1_S \otimes \theta_M)\theta_M = (1_S\otimes \alpha_M)\theta_M$, 
		\item 
		$\lambda_M\theta_M = 1_M$, 
		\item 
		$(\mu_S \otimes 1_M)(1_S \otimes \alpha_M) = 1_{S\otimes_R M}$, 
		\item 
		$(\mu_S \otimes 1_M)(1_S \otimes \theta_M) = \theta_M\lambda_M$.
	\end{enumerate}
	One can easily see that $(1)$ is just the cocycle condition and $(2)$ holds by definition of a descent datum on $M$. Furthermore, one can readily check $(3)$ and $(4)$. For instance, we have
	\[
	(\mu_S\otimes 1_M)(1_S\otimes \alpha_M) (s\otimes m) = (\mu_S\otimes 1_M)(s\otimes 1\otimes m)=s\otimes m. 
	\]
\end{proof}

\begin{lemma}\label{lemma: descent = pure}
	Let $\alpha:R \to S$ be a morphism of semirings. The following are equivalent:
	\begin{enumerate}
		\item
		$\alpha$ is a descent morphism.
		\item
		$\alpha$ is a pure morphism. 
	\end{enumerate}
\end{lemma}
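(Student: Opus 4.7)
The plan is to express both conditions as the same Yoneda-type statement about a canonical comparison map. For any $R$-module $N$, let $E_N$ denote the equalizer in $\Mod(R)$ of the pair $\alpha_S\otimes 1_N$ and $1_S\otimes\alpha_N$ from $S\otimes_R N$ to $S\otimes_R S\otimes_R N$ appearing in the purity diagram~\eqref{eq: equalizer pure}. Since $\alpha_N$ equalizes this pair, there is a canonical factorization $\eta_N\:N \to E_N$, and by definition $\alpha$ is pure if and only if $\eta_N$ is an isomorphism for every $N$.

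The main step is to construct a natural isomorphism
\[
	\Hom_{DD(\alpha)}(C_\alpha M, C_\alpha N) \cong \Hom_R(M, E_N)
\]
under which $C_\alpha$ on hom-sets becomes post-composition with $\eta_N$. I would start with the tensor-hom adjunction, which identifies an $S$-linear map $f\:S\otimes_R M \to S\otimes_R N$ with an $R$-linear $g\:M \to S\otimes_R N$ via $g(m)=f(1\otimes m)$. A careful unpacking of the canonical descent data on the two base changes should then show that the compatibility $\phi_N\circ(f\otimes 1_S)=(1_S\otimes f)\circ\phi_M$ is equivalent to $(1_S\otimes\alpha_N)\circ g=(\alpha_S\otimes 1_N)\circ g$, that is, to $g$ factoring through $E_N$. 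The key simplification is that the $S$-linearity of $f$ reduces the compatibility relation to its single instance on elements of the form $1\otimes m$. A further trace through the adjunction shows that $C_\alpha$ on hom-sets corresponds exactly to post-composition with $\eta_N$.

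The Yoneda lemma then finishes: $C_\alpha$ is fully faithful if and only if post-composition with $\eta_N$ is a bijection on $\Hom_R(M,\vbl)$ for every $M$ and every $N$, if and only if $\eta_N$ is an isomorphism for every $N$, if and only if $\alpha$ is pure. I expect the main obstacle to be the bookkeeping in the first step: pinning down the canonical descent isomorphism $\phi_{S\otimes_R N}$ as the shuffle $(s\otimes n)\otimes t\mapsto s\otimes t\otimes n$ and matching the two parallel arrows of the compatibility square with the pair $\alpha_S\otimes 1_N$, $1_S\otimes\alpha_N$ from the purity diagram. Once those conventions are fixed, the rest is formal.
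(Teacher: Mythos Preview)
Your proposal is correct and is essentially the same argument as the paper's: both use the tensor--hom adjunction to identify $\Hom_{DD(\alpha)}(C_\alpha M, C_\alpha N)$ with the equalizer of the pair $\Hom_R(M, S\otimes_R N)\rightrightarrows \Hom_R(M, S\otimes_R S\otimes_R N)$, and then apply Yoneda to conclude that full faithfulness of $C_\alpha$ is equivalent to \eqref{eq: equalizer pure} being an equalizer for every $N$. The paper writes this as a single commutative diagram of equalizers rather than naming $E_N$ and $\eta_N$ explicitly, but the content is identical.
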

\begin{proof}
	$(1) \Rightarrow (2)$: 
	By the tensor-hom adjunction, we have the following commutative diagram for any $R$-modules $M_1$ and $M_2$:
	\begin{equation}\label{eq: pure descent commutative diagram}
		\begin{tikzcd}[row sep=0.5cm, column sep=0.4cm]
			\Hom_R(M_1,M_2) \arrow[r] \arrow[d,"\id"]&
			\Hom_S(S\otimes_RM_1,S\otimes_RM_2) \arrow[r, shift left] \arrow[r,swap, shift right] \arrow[d,"\simeq"]
			& \Hom_S(S\otimes_RM_1,S\otimes_RS\otimes_R M_2) \arrow[d,"\simeq"]\\
			\Hom_R(M_1,M_2) \arrow[r] &
			\Hom_R(M_1,S\otimes_R M_2) \arrow[r, shift left] \arrow[r,swap, shift right] & \Hom_R(M_1,S\otimes_RS\otimes_RM_2)
		\end{tikzcd}
	\end{equation}
	Since the comparison functor $C_\alpha$ is fully faithful, 
	the top row is an equalizer diagram. Hence so is the bottom row.
	Then taking $M_1=R$, we obtain the following equalizer diagram:
	\begin{equation}\label{eq: equalizer}
		\begin{tikzcd}
			M_2 \arrow[r]&
			S\otimes_RM_2 \arrow[r, shift left] \arrow[r,swap, shift right]
			& S \otimes_R S \otimes_R M_2. 
		\end{tikzcd}
	\end{equation}
	This shows that $\alpha$ is pure. 
	
	$(2) \Rightarrow (1)$: Since $\alpha$ is pure, the bottom row of 
	\eqref{eq: pure descent commutative diagram} is an equalizer diagram for any $M_1$ and $M_2$. 
	Therefore so is the top row, and hence $C_\alpha$ is fully faithful. 
	Therefore $\alpha$ is a descent morphism. 
\end{proof}

One may apply a similar argument as in \cite[Theorem 6.3]{janelidze2004facets} to prove the following. 

\begin{proposition}
	Let $\alpha:R \to S$ be a morphism of semirings. Then, the base change functor $S\otimes_R - :\Mod(R) \to \Mod(S)$ is comonadic if and only if the (induced) base change functor $S\otimes_R-:\CAlg{R} \to \CAlg{S}$ is comonadic. 	
\end{proposition}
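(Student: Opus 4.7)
The plan is to apply Beck's comonadicity theorem (e.g., theorem 4.4.4 of~\cite{Borceux:Handbook.v2}) to both base-change functors, which I denote $F\:\Mod(R)\to\Mod(S)$ and $\tilde F\:\CAlg{R}\to\CAlg{S}$. Each has restriction of scalars along $\alpha$ as a right adjoint, so Beck's theorem says each is comonadic if and only if it reflects isomorphisms and preserves equalizers of those parallel pairs whose image admits a split equalizer. I would link the two settings via the commutative square
\[
\begin{tikzcd}
\CAlg{R} \arrow[r, "\tilde F"] \arrow[d, "U_R"'] & \CAlg{S} \arrow[d, "U_S"] \\
\Mod(R) \arrow[r, "F"'] & \Mod(S)
\end{tikzcd}
\]
whose vertical arrows are the forgetful functors. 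These are monadic via the symmetric-algebra adjunctions $\mathrm{Sym}_R\dashv U_R$ and $\mathrm{Sym}_S\dashv U_S$, so in particular $U_R$ and $U_S$ reflect isomorphisms and create limits. Crucially, since $F$ is a strong symmetric monoidal left adjoint, there is a natural isomorphism $F\,\mathrm{Sym}_R M\cong\mathrm{Sym}_S\,FM$ respecting the standard $\bN$-grading whose degree-$1$ component is $M$.

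For the forward direction, assume $F$ is comonadic. Reflection of isomorphisms by $\tilde F$ is immediate, since $U_S\circ\tilde F = F\circ U_R$ and $U_R$ reflects isomorphisms. For preservation of split-pair equalizers, given $f,g\:A\to B$ in $\CAlg{R}$ whose image under $\tilde F$ admits a split equalizer in $\CAlg{S}$, applying $U_S$ produces a split equalizer in $\Mod(S)$; by comonadicity of $F$ the equalizer of $U_R f,U_R g$ is preserved, and since $U_R,U_S$ create limits this module equalizer carries the canonical algebra structure realizing it as the equalizer of $f,g$ in $\CAlg{R}$, preserved by $\tilde F$.

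For the reverse direction, assume $\tilde F$ is comonadic. If $Ff$ is an iso in $\Mod(S)$, then $\mathrm{Sym}_S Ff\cong \tilde F\,\mathrm{Sym}_R f$ is an iso in $\CAlg{S}$, so algebra-level reflection gives $\mathrm{Sym}_R f$ an iso, and restricting to the degree-$1$ component returns $f$ as an iso in $\Mod(R)$. For preservation of split-pair equalizers, a split equalizer in $\Mod(S)$ for $Ff,Fg$ is absolute and in particular sent by $\mathrm{Sym}_S$ to a split equalizer in $\CAlg{S}$, showing $\mathrm{Sym}_R f,\mathrm{Sym}_R g$ is an $\tilde F$-split pair in $\CAlg{R}$. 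Its equalizer in $\CAlg{R}$ is a graded subalgebra of $\mathrm{Sym}_R M$ (the equalizer of a pair of graded algebra maps is graded) and is preserved by $\tilde F$. Since $F$ commutes with direct sums, the degree-$1$ component of this identity reads $F(\mathrm{eq}_{\Mod(R)}(f,g))\cong\mathrm{eq}_{\Mod(S)}(Ff,Fg)$, which is the sought-for preservation.

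The main obstacle is the reverse direction. The conceptual content is minimal, but correctly matching the $\bN$-gradings and ensuring that base change, equalizers, and the symmetric-algebra construction all interact compatibly requires careful bookkeeping. All the needed compatibilities flow from $F$ being a strong symmetric monoidal left adjoint, so they are essentially formal, but they must be assembled with care.
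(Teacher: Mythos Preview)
Your argument is correct. The paper itself does not give a proof but only refers the reader to the argument of Janelidze--Tholen~\cite[Theorem 6.3]{janelidze2004facets}, so you have supplied considerably more detail than the paper does. Your approach---verifying Beck's criterion for both functors and transferring between modules and algebras via the symmetric-algebra monad, using that $F$ is strong symmetric monoidal so that $\mathrm{Sym}_S\circ F\cong \tilde F\circ\mathrm{Sym}_R$ as graded algebras---is the natural way to instantiate the general categorical result cited, and all the compatibilities you invoke (that $U_R,U_S$ create limits and reflect isomorphisms, that split equalizers are absolute so $\mathrm{Sym}_S$ preserves them, that the comparison isomorphism on equalizers is graded and hence restricts to degree~$1$) are valid over semirings exactly as over rings.

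One small point worth making explicit in the reverse direction: when you conclude that $\mathrm{Sym}_R f$ being an isomorphism in $\CAlg{R}$ forces $f$ to be an isomorphism, you are using that a graded algebra map which is an isomorphism has a graded inverse. This holds because the direct-sum decomposition $\mathrm{Sym}_R M=\bigoplus_n\mathrm{Sym}^n_R M$ gives unique homogeneous components, and the usual argument (apply $\phi$ to the homogeneous decomposition of $\phi^{-1}(b)$) goes through unchanged in the semiring setting. With that noted, the degree-$1$ extraction is watertight.
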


Let $\alpha:R \to S$ be a morphism of semirings. The base change functor $\mathcal{F}=S\otimes_R-:\Mod(R) \to \Mod(S)$ has an obvious right adjoint functor, namely the restriction (of scalars) functor $\mathcal{U}:\Mod(S) \to \Mod(R)$. Let $\eta:1_{\Mod(R)} \to \mathcal{U}\mathcal{F}$ be the unit and $\epsilon:\mathcal{F}\mathcal{U}\to 1_{\Mod(S)} $ be the counit of the adjunction. So, we have a comonad $\mathcal{C}:=\mathcal{F}\mathcal{U}$ on $\Mod(S)$ with the counit $\epsilon$ and the comultiplication $\delta=\mathcal{F}\eta_\mathcal{U}:\mathcal{C} \to \mathcal{C}^2$. A $\mathcal{C}$-coalgebra consists of a pair $(N,h:N \to \mathcal{C}(N))$ of an $S$-module $N$ and an $S$-module map $h: N \to \mathcal{C}(N)$\footnote{The $S$-module structure of $\mathcal{C}(N)=S \otimes_RN$ is obtained by $s\cdot (a\otimes n)=(sa)\otimes n$.} 
satisfying the following conditions:
\begin{enumerate}
	\item(co-associativity) 
	\begin{equation}
		\begin{tikzcd}
			N \arrow[r,"h"]&
			\mathcal{C}(N)\arrow[r, shift left,"\delta_N"] \arrow[r,swap, shift right,"\mathcal{C}(h)"]
			& \mathcal{C}^2(N),
		\end{tikzcd}
	\end{equation}	
	\item(counit) 
	\begin{equation}
		\begin{tikzcd}
			N\arrow[r,"h"]  \arrow[dr,swap,"\id_N"]&
			\mathcal{C}(N) \arrow[d,"\epsilon_N"]  \\
			& N
		\end{tikzcd}
	\end{equation}
\end{enumerate}

If $(N,h)$ is a $\mathcal{C}$-coalgebra, then one can easily see that $h:N \to \mathcal{C}(M)=S\otimes_R N$ determines a descent datum on $N$. Conversely, any descent datum $\theta:N \to S\otimes_RN$ gives rise to a $\mathcal{C}$-coalgebra $(N,\theta)$. In fact, one can check that this induces an equivalence of categories between the category $DD(\alpha)$ of descent data and the category of $\mathcal{C}$-coalgebras. We thus have the following. 

\begin{proposition}\label{proposition: eff descent = comonadic}
	A morphism $\alpha:R \to S$ of semirings is an effective descent morphism if and only if the base change functor $S\otimes_R-:\Mod(R) \to \Mod(S)$ is comonadic.  
\end{proposition}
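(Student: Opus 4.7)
The plan is to exploit the equivalence, already outlined in the paragraph preceding the proposition, between the category $DD(\alpha)$ of descent data and the Eilenberg--Moore category of coalgebras for the comonad $\mathcal{C} = \mathcal{F}\mathcal{U}$ on $\Mod(S)$. Once this equivalence is pinned down, the statement follows almost tautologically from the definition of comonadicity, since the comparison functor for comonadicity is $\Mod(R) \to \mathcal{C}\text{-coalg}$, $M \mapsto (S \otimes_R M, \delta_M)$, and the definition of effective descent is that $C_\alpha\: \Mod(R) \to DD(\alpha)$ is an equivalence.

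First, I would make precise the construction of a functor $\Phi\: DD(\alpha) \to \mathcal{C}\text{-coalg}$ and its inverse. In one direction, given a descent datum $(M,\phi)$, the map $\theta_M\: M \to S \otimes_R M$, $m \mapsto \phi(m \otimes 1)$, was already verified in Lemma~\ref{lemma: DD(a) split equalizer} to satisfy the counit equation $\lambda_M \theta_M = 1_M$ and an identity equivalent to coassociativity, so $(M,\theta_M)$ is a $\mathcal{C}$-coalgebra. Conversely, given a coalgebra $(N,h)$, one recovers an $S \otimes_R S$-linear isomorphism $\phi\: N \otimes_R S \to S \otimes_R N$ by $\phi(n \otimes s) = s \cdot h(n)$, and the coalgebra axioms translate back into the cocycle condition and $\lambda_N \theta_N = 1_N$. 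Checking that these constructions are mutually inverse and functorial is routine bookkeeping with the symmetries of the tensor product.

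Next, I would verify that the comparison functors match up: the functor $C_\alpha\: \Mod(R) \to DD(\alpha)$ sends $M$ to $S \otimes_R M$ equipped with its canonical descent datum (swapping the two $S$-factors), and under $\Phi$ this goes to $(S \otimes_R M, \delta_M)$ where $\delta_M = \eta_{S \otimes_R M}$ is precisely the comultiplication of $\mathcal{C}$ applied at $M$. Thus the diagram
\begin{equation*}
\begin{tikzcd}
\Mod(R) \arrow[r,"C_\alpha"] \arrow[rd,swap,"K"] & DD(\alpha) \arrow[d,"\Phi","\simeq"swap] \\
& \mathcal{C}\text{-coalg}
\end{tikzcd}
\end{equation*}
commutes, where $K$ is the standard comparison functor associated to the adjunction $\mathcal{F} \dashv \mathcal{U}$.

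Finally, since $\Phi$ is an equivalence, $C_\alpha$ is an equivalence if and only if $K$ is one. By the very definition of comonadicity (dual to Beck's formulation for monads, e.g.\ Borceux~\cite{Borceux:Handbook.v2}), the latter condition is exactly that $\mathcal{F}$ is comonadic. This gives the proposition. The only step requiring any care is the translation of the cocycle condition $\phi_1 \phi_3 = \phi_2$ into coassociativity of $\theta_M$; all other verifications are immediate from the definitions. The normalization condition $\lambda_M \theta_M = 1_M$ is precisely the counit axiom on the coalgebra side, which is why its inclusion in the definition of descent datum is essential for this bijection to be clean.
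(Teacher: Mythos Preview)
Your proposal is correct and follows essentially the same approach as the paper: the paper states in the paragraph immediately preceding the proposition that the category $DD(\alpha)$ is equivalent to the category of $\mathcal{C}$-coalgebras, and then records the proposition as an immediate consequence (``We thus have the following''), without writing out a formal proof. Your write-up simply makes explicit the equivalence $\Phi$ and the compatibility of the two comparison functors that the paper leaves to the reader.
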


\begin{example}\label{example: split}
	If a morphism of semirings $\alpha:R \to S$ splits, then the base change $S\otimes_R-:\Mod(R) \to \Mod(S)$ is comonadic; this directly follows from \cite[Corollary 4.2]{janelidze2004facets}. In particular, from~(\ref{proposition: eff descent = comonadic}), any split morphism of semirings is an effective descent morphisms. For instance, by Golan's theorem~(\ref{thm:Golan}), 
the inclusion $j:\mathbb{B} \to R$ is an effective descent morphism for any nonzero idempotent semiring $R$.
\end{example}

One has the following which shows that ``faithfully flat = pure + flat'' for semirings.

\begin{proposition}\label{proposition: faithfully flat descent}
Let $\alpha:R\to S$ be a flat morphism. Then $\alpha$ is faithful if and only if $\alpha$ is pure. 	
\end{proposition}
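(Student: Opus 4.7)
The plan is to prove the two implications separately, invoking previously established results for the forward direction and a direct diagram chase for the converse.

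For the forward direction, suppose $\alpha$ is flat and faithful, so $\alpha$ is faithfully flat. By~(\ref{thm:faithfully-flat-descent}), the base change functor $S\otimes_R-\:\Mod(R)\to\Mod(S)$ is comonadic. Then~(\ref{proposition: eff descent = comonadic}) gives that $\alpha$ is an effective descent morphism, and in particular a descent morphism. Finally, applying~(\ref{lemma: descent = pure}) yields that $\alpha$ is pure.

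For the converse, suppose $\alpha$ is pure; flatness will not be needed here. Let $\varphi\:M\to N$ be an $R$-module map such that $\varphi_S=1_S\otimes\varphi$ is an isomorphism. Naturality together with purity applied to both $M$ and $N$ produces a commutative diagram
\begin{equation*}
\begin{tikzcd}[column sep=1.2cm]
M \arrow[r,"\alpha_M"]\arrow[d,"\varphi"] &
S\otimes_R M \arrow[r,shift left] \arrow[r,shift right] \arrow[d,"1_S\otimes\varphi"] &
S\otimes_R S\otimes_R M \arrow[d,"1_S\otimes 1_S\otimes\varphi"] \\
N \arrow[r,"\alpha_N"] &
S\otimes_R N \arrow[r,shift left] \arrow[r,shift right] &
S\otimes_R S\otimes_R N
\end{tikzcd}
\end{equation*}
whose rows are equalizer diagrams. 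The middle vertical arrow is an isomorphism by hypothesis, and the right vertical arrow is an isomorphism because tensoring preserves isomorphisms. Thus the parallel-pair portions of the two rows are identified by isomorphisms, so their equalizers are canonically isomorphic via $\varphi$. Hence $\varphi$ is an isomorphism, showing $\alpha$ is faithful.

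There is no substantive obstacle in this plan: the forward direction is bookkeeping that chains the prior equivalences faithfully flat $\Rightarrow$ comonadic $\Leftrightarrow$ effective descent $\Rightarrow$ descent $\Leftrightarrow$ pure, while the converse is a one-step diagram chase that actually uses only purity. It is perhaps worth flagging in the write-up that the flatness hypothesis is only invoked in one direction.
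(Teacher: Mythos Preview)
Your proof is correct and follows essentially the same approach as the paper: the forward direction chains the same implications (faithfully flat $\Rightarrow$ comonadic $\Leftrightarrow$ effective descent $\Rightarrow$ descent $\Leftrightarrow$ pure), just spelled out more explicitly than the paper's one-line appeal, and the converse is the same equalizer diagram chase. Your remark that flatness is used only in the forward direction is accurate and worth keeping.
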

\begin{proof}
(Only if): This directly follows from~(\ref{lemma: descent = pure}) since any faithfully flat morphism is a descent morphism. 

(If): Suppose that $\varphi:M\to N$ is an $R$-module map such that $\varphi_S:M_S \to N_S$ is an isomorphism. Then, we have the following diagram:
\begin{equation}
\begin{tikzcd}
	M \arrow[r] \arrow[d,swap,"\varphi"] & S\otimes_RM \arrow[r,shift right] \arrow[r,shift left] \arrow[d,"\simeq"]& S\otimes_RS\otimes_SM\arrow[d,"\simeq"] \\
		N \arrow[r] & S\otimes_RN \arrow[r,shift right] \arrow[r,shift left] & S\otimes_RS\otimes_SN
\end{tikzcd}
\end{equation}
Since $\alpha$ is pure, the rows are equalizers. It follows that $\varphi$ is an isomorphism.
\end{proof}

\section{Zariski topology}
\label{section: Zariski topology}

\begin{proposition}\label{pro: fpqc is zariski}
Consider elements $a_1,\dots,a_m \in R$. Then  $R \to \prod_{i=1}^mR[1/a_i]$ is faithfully flat if and only
if $(a_1,\dots,a_m)$ is the unit ideal.
\end{proposition}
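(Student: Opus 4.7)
The plan is to prove both directions by exploiting two structural facts: first, $\prod_i R[1/a_i]$ coincides with $\bigoplus_i R[1/a_i]$ in $\Mod(R)$, so tensoring with it distributes componentwise; and second, each localization $R[1/a_i]$ is the filtered colimit of free rank-one modules $R \xrightarrow{a_i} R \xrightarrow{a_i} \cdots$, hence is flat by Lazard's criterion (Theorem~\ref{thm: equivalent conditions for flatness}), with the analogous description $R[1/a_i] \otimes_R M = \colim(M \xrightarrow{a_i} M \xrightarrow{a_i} \cdots)$ for any $R$-module $M$.

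For the ``if'' direction, assuming $1 \in (a_1,\dots,a_m)$, flatness of $S := \prod_i R[1/a_i]$ is immediate from flatness of each factor plus closure of flatness under finite direct sums. For faithfulness, I would suppose $\varphi\colon M \to N$ has each base change $\varphi_{R[1/a_i]}$ an isomorphism and deduce $\varphi$ is one. The key algebraic input is that $1 \in (a_1^n, \dots, a_m^n)$ for every $n \geq 1$: raise $1 = \sum r_i a_i$ to the $(m(n-1)+1)$-th power, expand via the multinomial formula, and apply pigeonhole so that each summand contains some $a_j^n$. Injectivity then follows because $\varphi(x) = \varphi(y)$ forces $a_i^{n_i} x = a_i^{n_i} y$ in $M$ (from the colimit description of localization), so $x = (\sum s_i a_i^n) x = (\sum s_i a_i^n) y = y$ for a uniform exponent $n$ and suitable $s_i$ with $1 = \sum s_i a_i^n$. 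Surjectivity is analogous: each $z \in N$ lifts over each $R[1/a_i]$ to produce $y_i \in M$ with $\varphi(y_i) = a_i^{m_i} z$, and $z = \varphi(\sum s_i y_i)$ for a common exponent $m$.

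For the ``only if'' direction, I would work with the inclusion $\iota\colon I \hookrightarrow R$ where $I = (a_1,\dots,a_m)$, rather than the quotient $R/I$. For each $i$, the base change $I \otimes_R R[1/a_i] \to R[1/a_i]$ is injective by flatness (Proposition~\ref{pro:flat-tensor-preserve-injectivity}) and surjective because $a_i \otimes a_i^{-1} \in I \otimes_R R[1/a_i]$ maps to $1$. Summing over $i$ shows $\iota \otimes_R S$ is an isomorphism, and faithfulness of $S$ then forces $\iota$ itself to be one, i.e., $I = R$.

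The main obstacle to bear in mind is that the naive ring-theoretic strategy for the forward direction---tensor the quotient $R \to R/I$ with $S$ and conclude $R/I = 0$---breaks down over semirings, since the module quotient $R/I$ can vanish even when $I$ is a proper ideal. For instance $\bN/(2,3) = 0$ as an $\bN$-module, since $1 + 2 = 3$ forces $1 \sim 0$ in the quotient. Switching from the quotient to the inclusion $I \hookrightarrow R$ circumvents this: flatness preserves monomorphisms, and invertibility of each $a_i$ in $R[1/a_i]$ supplies the surjectivity that in the ring case would come from the vanishing-of-quotient argument.
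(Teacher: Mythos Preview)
Your proof is correct. The ``only if'' direction matches the paper's argument exactly, down to the choice of working with the inclusion $I \hookrightarrow R$ rather than the quotient $R/I$ (and your remark on why the quotient fails over semirings is apt).

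For the ``if'' direction you take a genuinely different route. You verify faithfulness directly: given $\varphi\colon M\to N$ with each $\varphi_{R[1/a_i]}$ an isomorphism, you deduce injectivity and surjectivity of $\varphi$ from the identity $1\in(a_1^n,\dots,a_m^n)$ and the colimit description of localization. The paper instead establishes \emph{purity}: it shows that for every $R$-module $M$ the diagram
\[
M \longrightarrow \prod_i M[1/a_i] \rightrightarrows \prod_{i,j} M[1/a_ia_j]
\]
is an equalizer, and then invokes Proposition~\ref{proposition: faithfully flat descent} (flat and pure implies faithfully flat). Both arguments rest on the same multinomial/pigeonhole trick, but the paper's route isolates the Zariski sheaf condition as an explicit byproduct---exactly what is needed later for gluing and the definition of the Zariski topology---whereas your direct check of faithfulness is shorter and more self-contained but does not record that equalizer statement separately.
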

\begin{proof}
(If):
As in the case for rings, localization is flat.\footnote{See \cite[Section 1.17]{borger2016witt}. Also, see \cite[Example 2.94]{culling2019etale} for an interesting discussion on ``additive localization'' for semirings which is not flat.} 
So by~(\ref{proposition: faithfully flat descent}),
it is enough to prove that for any $R$-module $M$, the following is an equalizer diagram: 
\begin{equation}
\begin{tikzcd}
M \arrow[r,"f"] &	\prod_{i=1}^mM[1/a_i] \arrow[r,swap, shift right,"\pi_2"] \arrow[r,shift left,"\pi_1"] &  \prod_{i,j}M[1/a_ia_j]
\end{tikzcd}, \quad a_i,a_j \in R.
\end{equation}

We first prove that $f$ is an injection. Suppose that $t,u \in M$ have the same image. 
Then there exists $n \in \mathbb{N}$ such that $a_i^nt=a_i^nu$. 
Now because the $a_i$ generate the unit ideal, we can write $1=\sum_{i=1}^ma_ix_i$, where $x_i\in R$.
Then for any $N\geq 0$, we have
\begin{equation}
	\label{eq:hokey}
	1=\left( \sum_{i=1}^ma_ix_i\right)^{N} = \sum_{\stackrel{j_1+\cdots+j_m}{=N}} {N\choose j_1,\dots,j_m} \prod_{i=1}^m (a_ix_i)^{j_i}. 
\end{equation}
Once $N$ is sufficiently large (greater than $m(n-1)$), in each term 
$\prod_{i=1}^m(a_ix_i)^{j_i}$ one of the $j_i$ will be at least $n$, and hence we have
\[
t\prod_{i=1}^m(a_ix_i)^{j_i}=u\prod_{i=1}^m(a_ix_i)^{j_i}.
\]
Summing up according to (\ref{eq:hokey}), we have $t=u$.

Now, suppose an element $(\dots,t_i/a_i^n,\dots) \in \prod_{i=1}^mM[1/a_i]$ satisfies $t_i/a_i^n=t_j/a_j^n$ in $M[1/a_ia_j]$. Then for all $i$ and $j$, there exists an $h \in \mathbb{N}$ such that 
\[
(a_ia_j)^ha_j^nt_i = (a_ia_j)^ha_i^nt_j.
\]
Replacing each $h$ with the largest of them, we may assume $h$ is independent of $i$ and $j$. 	
Next, for any sufficiently large integer $N$ (greater than $m(n+h-1)$, say), we can write
\[
\left( \sum_{i=1}^ma_ix_i\right)^{N} = \sum_{i=1}^m a_i^{n+h}\gamma_i,\textrm{ where } \gamma_i \in R.
\]
Then we have $1=\sum_{i=1}^ma_i^{n+h}\gamma_i$. Now put $v=\sum_i a_i^h\gamma_it_i \in M$. Then we have
\[
a_j^{n+h}v=\sum_ia_j^{n+h}a_i^h\gamma_it_i = \sum_i a_j^ha_i^{n+h}\gamma_it_j = t_ja_j^h\sum_{i=1}^ma_i^{n+h}\gamma_i=t_ja_j^h. 
\]
It follows that $v$ maps to $t_j/a_j^n$ in $M[1/a_j]$. 

(Only if): Let $I$ denote the ideal generated by $a_1,\dots,a_m$. Consider the inclusion $I\to R$.
After base change to any $R[1/a_i]$, it becomes an isomorphism. It therefore becomes
an isomorphism after base change to $\prod_{i=1}^mR[1/a_i]$, which is faithfully flat by assumption.
Therefore the original inclusion $I\to R$ is an isomorphism.
\end{proof}

\begin{remark}
\label{rmk:Zariski topology on Aff}
For any semiring $R$, we can define the Zariski topology on $\Aff_{R}$ to be the one generated by finite 
families $(\Spec(A[1/a_i])\to\Spec(A))_{i\in I}$,
where the $a_i$ generate the unit ideal of $A$. It is coarser than the fpqc topology.

The Zariski topology of To\"en--Vaqui\'e~\cite{Toen-Vaquie:Under-Spec-Z} is generated by finite covering
families $(\Spec(A_i)\to\Spec(A))_{i\in I}$, where each map $A\to A_i$ is a flat epimorphism of finite 
presentation. 
It is at least as fine as our Zariski topology and is equivalent over rings.
But at the time of this writing, we do not know whether the two agree over general semirings. 
\end{remark}

\begin{remark}
\label{rmk:Jaiung's thesis}
One may also set up scheme theory over semirings using ``locally semiringed'' spaces,
as over rings with locally ringed spaces. See section 2.2 of~\cite{jun2015algebraic}. 
This is equivalent to the Zariski topology of this paper in~(\ref{rmk:Zariski topology on Aff}).
\end{remark}

\section{Finite presentation and generation}
\label{section: finite presentation and generation}

\begin{proposition}
\label{pro:direct image finiteness}
	Let $R\to S$ be a morphism of semirings.
	\begin{enumerate}
		\item If $S$ finitely generated (resp.\ presented) as an $R$-module,
			then any finitely generated (resp.\ presented) $S$-module is finitely generated (resp.\ presented) 
			as an $R$-module.
		\item If $S$ finitely generated (resp.\ presented) as an $R$-algebra,
			then any finitely generated (resp.\ presented) $S$-algebra is finitely generated (resp.\ presented) 
			as an $R$-algebra.
	\end{enumerate}
\end{proposition}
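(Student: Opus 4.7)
The plan is to handle the module statements in (1) first, and then note that (2) is an essentially identical argument with polynomial algebras in place of free modules.

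For finite generation in (1), if $s_1,\dots,s_k$ generate $S$ as an $R$-module and $m_1,\dots,m_\ell$ generate $M$ as an $S$-module, then the $k\ell$ products $s_im_j$ generate $M$ as an $R$-module. Finite generation in (2) is even more immediate: if $x_1,\dots,x_k$ generate $S$ as an $R$-algebra and $y_1,\dots,y_\ell$ generate $T$ as an $S$-algebra, then $x_1,\dots,x_k,y_1,\dots,y_\ell$ generate $T$ as an $R$-algebra.

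For finite presentation in (1), I would fix presentations $S\cong R^k/(p_\alpha\sim q_\alpha\mid\alpha\in A)$ of $S$ as an $R$-module and $M\cong S^\ell/(f_\beta\sim g_\beta\mid\beta\in B)$ of $M$ as an $S$-module, with $A$ and $B$ finite, and then compose the surjections $R^{k\ell}\twoheadrightarrow S^\ell\twoheadrightarrow M$. The claim to establish is that $M$ is presented as an $R$-module by the $R$-congruence $\rho$ on $R^{k\ell}$ generated by two finite families: (i) for each $\alpha\in A$ and each coordinate index $j\in\{1,\dots,\ell\}$, the pair obtained by placing $(p_\alpha,q_\alpha)$ in the $j$-th block of $R^{k\ell}$; and (ii) for each $\beta\in B$ and each of the $R$-module generators $s_i$ of $S$ (say the images of the basis vectors of $R^k$), some lift to $R^{k\ell}\times R^{k\ell}$ of the pair $(s_if_\beta,\, s_ig_\beta)\in S^\ell\times S^\ell$. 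This combined family has at most $\ell|A|+k|B|$ pairs.

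The mildly subtle point---and what I expect to be the only step requiring thought rather than bookkeeping---is why family (ii) needs the ``multiplied'' pairs $(s_if_\beta, s_ig_\beta)$ rather than bare lifts of $(f_\beta, g_\beta)$. The $S$-module congruence on $S^\ell$ generated by the $(f_\beta,g_\beta)$ is closed under multiplication by all of $S$, whereas an $R$-module congruence on $R^{k\ell}$ is only closed under $R$-action; to realize $S$-scaling at the level of $R$-congruences, one must include multiplication by an $R$-module generating set of $S$ explicitly. Verifying the claim then reduces to a direct universal-property check, using the given presentations of $S$ and $M$ and the fact that tensoring with a free module commutes with congruence quotients. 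Part (2) follows the same template with $R^k$ replaced by $R[x_1,\dots,x_k]$ and $S^\ell$ by $S[y_1,\dots,y_\ell]$, and is in fact easier: an $R$-algebra congruence on $R[x_i,y_j]$ is automatically closed under multiplication by arbitrary elements of $R[x_i,y_j]$, since it contains the diagonal and is closed under componentwise multiplication of pairs. Hence the ``multiplied'' relations of family (ii) become superfluous, and the $|A|$ relations defining $S$ together with any lifts of the $|B|$ relations defining $T$ suffice to present $T$ over $R$. I foresee no further obstacle beyond these observations.
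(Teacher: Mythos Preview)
Your argument is correct, and you have put your finger on exactly the right subtlety: the need to multiply the relations $(f_\beta,g_\beta)$ by an $R$-module generating set of $S$ before lifting, so that the resulting $R$-congruence on $S^\ell$ really coincides with the $S$-congruence generated by the $(f_\beta,g_\beta)$. Your observation that this multiplication is unnecessary in the algebra case is also correct and pleasant.

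The paper, however, takes a much shorter categorical route. Instead of building an explicit presentation, it simply writes a finitely presented $S$-module $M$ as a finite colimit of copies of $S$ in $\Mod(S)$ (namely the coequalizer diagram $S^p\rightrightarrows S^m\to M$), notes that the restriction functor $\Mod(S)\to\Mod(R)$ preserves colimits (it has the right adjoint $\Hom_R(S,-)$), and concludes that $M$ is a finite colimit in $\Mod(R)$ of copies of the finitely presented $R$-module $S$; since finitely presented objects are closed under finite colimits, $M$ is finitely presented over $R$. Part (2) is literally the same sentence with algebra colimits in place of module colimits. What your explicit approach buys is a concrete bound on the size of the presentation ($k\ell$ generators, $\ell|A|+k|B|$ relations) and self-containment---you never need to know that restriction is cocontinuous. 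What the paper's approach buys is brevity and uniformity: one argument, no case analysis, no bookkeeping of lifts. It is worth noting that the fact underlying your ``multiplied relations'' trick---that the $S$-congruence generated by a set $X$ equals the $R$-congruence generated by $\{(s_ix,s_iy)\}$---is precisely an unpacking of the cocontinuity of restriction at a single coequalizer, so the two proofs are closer than they might first appear.
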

\begin{proof}
	(1): For finite generation, write such an $S$-module $M$ as a quotient of the $m$-fold coproduct
	$S^m$ for some $m\geq 0$. 
	Since $S$ is finitely generated as an $R$-module, $M$ is a quotient of a finitely generated $R$-module.
	It is therefore finitely generated as an $R$-module.
	
	Similarly, for finite presentation, write $M$ as a colimit $\colim_{i\in I} S$ over a finite diagram $I$.
	Then since $S$ is finitely presented as an $R$-module, $M$ is a finite colimit of finitely presented
	$R$-modules. It is therefore finitely presented as an $R$-module.
	
	(2): The same argument works with algebras, but with coproducts and colimits of algebras instead with of modules.
\end{proof}

\begin{proposition}\label{proposition: fpqc-locally f.pres implies f.pres}
	Let $R$ be a semiring and $R'$ be faithfully flat $R$-algebra.
	Let $M$ be an $R$-module such that $M_{R'}$ is a finitely presented (resp.\ finitely generated) $R'$-module. 
	Then $M$ is finitely presented (resp.\ finite generated) as an $R$-module. 
\end{proposition}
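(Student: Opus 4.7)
My plan is to use the faithfulness of $R'$, a consequence of faithful flatness by~(\ref{proposition: faithfully flat descent}): a morphism of $R$-modules is an isomorphism if and only if its base change to $R'$ is. Hence it suffices in each case to produce a finitely generated (resp.\ finitely presented) $R$-module mapping to $M$ whose base change to $R'$ is an isomorphism.

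For finite generation, a finite generating set of $M_{R'}$ can be written as $R'$-linear combinations of elementary tensors $m_j\otimes 1$ with $m_j\in M$, so the $m_j$'s themselves already generate $M_{R'}$ as an $R'$-module. Let $N\subseteq M$ be the $R$-submodule they span. The inclusion $N\hookrightarrow M$ remains injective after base change by~(\ref{pro:flat-tensor-preserve-injectivity}), and is surjective by construction, so $N_{R'}\to M_{R'}$ is an isomorphism and faithfulness gives $N=M$.

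For finite presentation, apply the previous case first to obtain a surjection $\pi\colon R^n\twoheadrightarrow M$, and let $K=R^n\times_M R^n\subseteq R^n\times R^n$ be its kernel congruence. Flatness of $R'$ preserves this finite limit, giving $K\otimes_R R'=(R')^n\times_{M_{R'}}(R')^n=:K'$. Because $M_{R'}$ is finitely presented, a standard comparison argument between any finite presentation of $M_{R'}$ and the surjection $\pi_{R'}$ shows that $K'$ is generated, as a congruence on $(R')^n$, by some finite subset $S'\subseteq (R')^n\times (R')^n$. Each element of $S'$, lying in $K\otimes_R R'$, is an $R'$-linear combination of elementary tensors $k\otimes 1$ with $k\in K$, so we may collect a finite subset $T\subseteq K$ whose $R'$-span contains $S'$. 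Let $M_T$ be the coequalizer in $\Mod(R)$ of the two maps $R^{|T|}\rightrightarrows R^n$ determined by the pairs in $T$; it is finitely presented by construction, and $\pi$ factors through a surjection $q\colon M_T\twoheadrightarrow M$. After base change to $R'$, the congruence defining $(M_T)_{R'}$ is generated by the image of $T$ in $(R')^n\times (R')^n$; this congruence contains $S'$ hence all of $K'$, and is contained in $K'$ because $T\subseteq K$. Thus $(M_T)_{R'}=(R')^n/K'=M_{R'}$, so $q_{R'}$ is an isomorphism, and faithfulness promotes this to $q$ itself being an isomorphism.

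The main technical subtlety—and the reason the proof cannot simply copy the ring-theoretic argument—is that over a semiring there is no kernel submodule to descend; one must instead work with the full equivalence relation $R^n\times_M R^n$ and keep track of how it is generated as a congruence under $R$-linearity together with reflexivity, symmetry, and transitivity. Each of these generating operations is preserved by flat base change, which is what makes the key identification $(M_T)_{R'}=M_{R'}$ go through.
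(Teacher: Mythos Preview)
Your proof is correct and takes a genuinely different route from the paper's. The paper uses the categorical characterization of finite presentation (resp.\ generation) as the property that $\Hom_R(M,-)$ preserves all filtered colimits (resp.\ filtered colimits of injections), stated in the preliminaries. For any filtered diagram $(N_i)$ it then writes down, using purity of $R\to R'$ (\ref{lemma: descent = pure}), a pair of equalizer columns relating $\colim_i\Hom_R(M,N_i)\to\Hom_R(M,\colim_i N_i)$ to the corresponding maps over $R'$ and over $R''=R'\otimes_R R'$; since $M_{R'}$ is finitely presented, the two lower horizontal maps are bijections, and the equalizer forces the top one to be as well. Your approach is instead constructive: you actually build a candidate finite presentation of $M$ over $R$ and then invoke faithfulness to certify it. This is more explicit and perhaps more illuminating, but it leans on the ``standard comparison argument'' that any surjection from a finite free module onto a finitely presented module has finitely generated kernel congruence---valid in any algebraic category, though not entirely trivial in the semiring setting, as your final paragraph rightly acknowledges. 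The paper's argument sidesteps this lemma entirely by never naming a presentation, at the cost of being less hands-on.
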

\begin{proof}
	For finite presentation, it is enough to show that $\Hom_R(M,\vbl)$ commutes with filtered colimits. 
	Let $(N_i)_{i \in I}$ be a filtered diagram of $R$-modules. Let $R''=R'\otimes_R R'$. 
	Consider the following diagram:
	\begin{equation}
		\begin{tikzcd}
			\colim_i \Hom_R(M,N_i) \arrow[r] \arrow[d]	& \Hom_R(M,\colim_i N_i) \arrow[d]\\
			\colim_i\Hom_{R'}(M_{R'},(N_i)_{R'}) \arrow[r,"\simeq"]	\arrow[d, shift right] 
				\arrow[d, shift left]& \Hom_{R'}(M_{R'},\colim_i (N_i)_{R'}) \arrow[d, shift right] 
				\arrow[d, shift left]\\
			\colim_i\Hom_{R'}(M_{R'},(N_i)_{R''})	 \arrow[r,"\simeq"] & \Hom_{R'}(M_{R'},\colim_i (N_i)_{R''})
		\end{tikzcd}
	\end{equation}	
	Since a faithfully flat morphism is pure, 
	by \eqref{eq: pure descent commutative diagram} of~(\ref{lemma: descent = pure}), 
	the two columns are equalizer diagrams. Since $M_{R'}$ is finitely presented over $R'$ by assumption, 
	the two lower horizontal maps are bijections. It follows that the upper horizontal map is an isomorphism, 
	and hence $M$ is finitely presented. 
	
	Similarly, for finite generation, 
	it is enough to show that $\Hom_R(M,\vbl)$ commutes with filtered colimits of monomorphisms. 
	The argument then goes as above, noting that monomorphy is preserved by flat base change.
\end{proof}

\begin{proposition}\label{pro:module-finiteness-and-extensiveness}
	Let $R=\prod_{i=1}^n R_i$, where the $R_i$ are semirings, and let $M$ be an $R$-module.
	Let $M_i=R_i\otimes_R M$.
	Then $M$ is finitely generated (resp.\ presented) if and only each $M_i$ is finitely generated 
	(resp.\ presented) as an $R_i$-module.
\end{proposition}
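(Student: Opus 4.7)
The plan is to handle the ``only if'' direction by invoking the standard fact that base change along any semiring homomorphism preserves finite generation and finite presentation (since tensor product preserves colimits, so a finite presentation $R^q\rightrightarrows R^p \to M$ pushes forward to a finite presentation $R_i^q\rightrightarrows R_i^p \to M_i$). For finite generation the same argument works using just the final coequalizer piece.

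For the ``if'' direction, the key is to exploit the equivalence of categories in~(\ref{pro:component cover of disjoint union}). Under that equivalence, an $R$-module $M$ is identified with the tuple $(M_1,\dots,M_n)$, and conversely a tuple $(N_1,\dots,N_n)$ with $N_i \in \Mod(R_i)$ assembles to the $R$-module $\bigoplus_i N_i$, where each $N_i$ is viewed as an $R$-module via the projection $R \to R_i$. In particular, applied to $M$ itself, one obtains a canonical $R$-module isomorphism $M \cong \bigoplus_i M_i$. Note also that each factor $R_i$, viewed as an $R$-module, is a direct summand of $R$ (cut out by the idempotent $e_i$), hence is finitely presented as an $R$-module.

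Now suppose each $M_i$ is finitely generated (resp. finitely presented) as an $R_i$-module. Since $R_i$ is finitely generated and finitely presented as an $R$-module, proposition~(\ref{pro:direct image finiteness}) implies that each $M_i$ is finitely generated (resp. finitely presented) as an $R$-module. Finally, a finite direct sum of finitely generated (resp. finitely presented) $R$-modules has the same property, so $M \cong \bigoplus_i M_i$ does too.

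I don't foresee a real obstacle: both directions are essentially formal given the tools set up in~(\ref{pro:component cover of disjoint union}) and~(\ref{pro:direct image finiteness}). The only minor subtlety worth flagging is the conceptual distinction between ``finitely generated as an $R_i$-module'' and ``finitely generated as an $R$-module'' for the summand $M_i$; this is precisely where~(\ref{pro:direct image finiteness}) is doing the work, because it tells us that finiteness of $R_i$ over $R$ propagates to $R_i$-modules. As an alternative route, one could observe that the primitive idempotents $e_1,\dots,e_n$ generate the unit ideal of $R$, so by~(\ref{pro: fpqc is zariski}) the map $R \to \prod_i R[1/e_i] = \prod_i R_i$ is faithfully flat, and then appeal directly to~(\ref{proposition: fpqc-locally f.pres implies f.pres}); but the direct summand argument above is more elementary and gives the same result.
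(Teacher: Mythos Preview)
Your proposal is correct and follows essentially the same route as the paper: both use $M\cong\bigoplus_i M_i$ from~(\ref{pro:component cover of disjoint union}), reduce via~(\ref{pro:direct image finiteness}) to showing each $R_i$ is finitely presented over $R$, and then use that finite direct sums preserve finite generation/presentation. The only cosmetic difference is that the paper exhibits an explicit presentation of $R_i$ by the relations $e_j\sim\delta_{ij}$, whereas you invoke the direct-summand-of-$R$ observation; both arguments are valid and amount to the same thing.
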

\begin{proof}
	(Only if): Tensoring preserves finite generations and presentation.
	
	(If): Since $M\cong M_1\times \cdots \times M_n$, 
	it is enough to show each $M_i$ is finitely generated (resp.\ presented). 
	Further, by~(\ref{pro:direct image finiteness}), 
	we only need to show that each $R_i$ is finitely presented as an $R$-module.
	But this holds because each $R_i$ is the quotient of $R$ in the category of $R$-modules
	defined by the finitely many relations $e_j\sim\delta_{ij}$, 
	where $e_j$ denotes the $j$-th primitive idempotent in $R$ and $\delta$ denotes the Kronecker 
	delta function.
\end{proof}

\begin{remark}
Because of the two previous propositions,
we can say that finite presentation (or generation) is an fpqc-local property.
In the language of stacks, one would say that on $\Aff_{R}:=\CAlg{R}^{\op}$,
the full subcategories of $\Mod(A)$, for $A\in\CAlg{R}$, spanned by the 
finitely presented (or generated) $A$-modules form a substack of $\Mod_R$ in the fpqc topology. 
The analogous result is true for algebras:
\end{remark}

\begin{proposition}\label{proposition: fpqc-locally f.gen implies f.gen}
	Let $R$ be a semiring and $R'$ be faithfully flat $R$-algebra. Let $A$ be an $R$-algebra such that $A_{R'}$ is a finitely presented (resp.\ finitely generated) $R'$-algebra. Then $A$ is finitely presented (resp.\ finite generated) as an $R$-algebra. 
\end{proposition}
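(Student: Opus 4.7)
The plan is to mimic the argument of~(\ref{proposition: fpqc-locally f.pres implies f.pres}) but in the category $\CAlg{R}$ instead of $\Mod(R)$. Recall that $A$ is finitely presented as an $R$-algebra if and only if $\Hom_{\CAlg{R}}(A,\vbl)$ preserves all filtered colimits, and $A$ is finitely generated if and only if it preserves filtered colimits of monomorphisms. So I would fix a filtered diagram $(B_i)_{i\in I}$ in $\CAlg{R}$ and aim to show
\[
\colim_i \Hom_{\CAlg{R}}(A,B_i) \longisomap \Hom_{\CAlg{R}}(A,\colim_i B_i).
\]

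The first step is to upgrade purity of $\alpha\:R\to R'$ to algebras. Since limits in $\CAlg{R}$ and $\CAlg{R'}$ are computed at the level of underlying sets, and hence agree with those in $\Mod(R)$ and $\Mod(R'')$, the diagram
\[
\begin{tikzcd}[column sep=1.5cm]
B \arrow[r] & R'\otimes_R B \arrow[r, shift left] \arrow[r, shift right] & R'\otimes_R R'\otimes_R B
\end{tikzcd}
\]
from~(\ref{definition: pure morphism}) is also an equalizer in $\CAlg{R}$ whenever $B$ is an $R$-algebra. Applying $\Hom_{\CAlg{R}}(A,\vbl)$ and using the base-change/restriction adjunction to rewrite $\Hom_{\CAlg{R}}(A,R'\otimes_R B)\cong\Hom_{\CAlg{R'}}(A_{R'},B_{R'})$ and similarly over $R''=R'\otimes_R R'$, I obtain an equalizer diagram of sets for each $B$.

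The main step is then to assemble the commutative diagram
\[
\begin{tikzcd}[row sep=0.6cm, column sep=0.3cm]
\colim_i \Hom_{\CAlg{R}}(A,B_i) \arrow[r] \arrow[d] & \Hom_{\CAlg{R}}(A,\colim_i B_i) \arrow[d]\\
\colim_i \Hom_{\CAlg{R'}}(A_{R'},(B_i)_{R'}) \arrow[r,"\simeq"] \arrow[d, shift left] \arrow[d, shift right] & \Hom_{\CAlg{R'}}(A_{R'},\colim_i (B_i)_{R'}) \arrow[d, shift left] \arrow[d, shift right]\\
\colim_i \Hom_{\CAlg{R'}}(A_{R'},(B_i)_{R''}) \arrow[r,"\simeq"] & \Hom_{\CAlg{R'}}(A_{R'},\colim_i (B_i)_{R''})
\end{tikzcd}
\]
in which the two columns are equalizers by the previous step (and since filtered colimits commute with finite limits in sets). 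The lower two horizontal arrows are bijections because $A_{R'}$ is finitely presented as an $R'$-algebra and because base change $R'\otimes_R\vbl$ and $R''\otimes_R\vbl$, being left adjoints, preserve filtered colimits (so that $\colim_i(B_i)_{R'}\cong(\colim_i B_i)_{R'}$ and similarly over $R''$). Since the equalizer of a pair of maps is determined up to canonical isomorphism by the pair, the top horizontal map is also a bijection, which proves finite presentation.

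For the finite-generation statement, the same diagram works with $(B_i)_{i\in I}$ restricted to filtered diagrams of monomorphisms; the key observation is that monomorphisms of $R$-algebras are preserved by the flat base changes $R'\otimes_R\vbl$ and $R''\otimes_R\vbl$, so the hypothesis that $A_{R'}$ is finitely generated over $R'$ still makes the lower two horizontal arrows bijections. The main subtlety to watch is the interchange of $\colim_i$ with the base-change functors in the lower row, but this follows formally from the fact that base change is a left adjoint, so no difficulty arises beyond the careful identification of categories and Homs.
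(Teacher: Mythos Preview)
Your proposal is correct and follows essentially the same approach as the paper, which simply says to apply the proof of~(\ref{proposition: fpqc-locally f.pres implies f.pres}) in the category of $R$-algebras instead of $R$-modules. You have written out the details carefully, including the verification that purity passes to $\CAlg{R}$ and that flat base change preserves monomorphisms of algebras, which the paper leaves implicit.
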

\begin{proof}
One may apply the same proof as in~(\ref{proposition: fpqc-locally f.pres implies f.pres})
but in the category of $R$-algebras instead of $R$-modules.
\end{proof}

\begin{proposition}\label{pro:algebra-finiteness-and-extensiveness}
	Let $R=\prod_{i=1}^n R_i$, where the $R_i$ are semirings, and let $A$ be an $R$-algebra.
	Let $A_i=R_i\otimes_R A$.
	Then $A$ is finitely generated (resp.\ presented) if and only each $A_i$ is finitely generated (resp.\ 
	presented) as an $R_i$-algebra.
\end{proposition}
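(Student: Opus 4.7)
The plan is to mimic the proof of~(\ref{pro:module-finiteness-and-extensiveness}), but in the category of $R$-algebras. The ``only if'' direction is the same: base change along $R \to R_i$ is a left adjoint sending $R[x_j \mid j \in J]$ to $R_i[x_j \mid j \in J]$, so it preserves both finite generation and finite presentation of algebras.

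For the ``if'' direction, the key observation is that the equivalence $\Mod(R) \simeq \prod_i \Mod(R_i)$ from~(\ref{pro:component cover of disjoint union}) is symmetric monoidal---the tensor product on the right is computed componentwise---and therefore lifts to an equivalence on categories of commutative algebra objects,
\[
\CAlg{R} \longisomap \prod_i \CAlg{R_i}, \qquad A \longmapsto (A_1, \ldots, A_n).
\]
Under this equivalence, filtered colimits are computed componentwise, and $\Hom_{\CAlg{R}}(A, B) = \prod_i \Hom_{\CAlg{R_i}}(A_i, B_i)$.

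Now $A$ is finitely presented (resp.\ finitely generated) as an $R$-algebra if and only if $\Hom_{\CAlg{R}}(A,-)$ preserves filtered colimits (resp.\ filtered colimits of monomorphisms). Given the componentwise description above, together with the fact that finite products of sets commute with filtered colimits of sets, this condition reduces to each $\Hom_{\CAlg{R_i}}(A_i,-)$ preserving such colimits, which is exactly what it means for each $A_i$ to be f.p.\ (resp.\ f.g.)\ as an $R_i$-algebra. This yields the claimed equivalence.

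I expect the main subtlety to be precisely the point that finite products of algebras are not finite coproducts, so the module-case reduction ``$M \cong M_1 \times \cdots \times M_n$, hence it is enough to show each $M_i$ is f.g.\ (resp.\ f.p.)'' does not transport over unchanged. The equivalence-of-categories argument handles this cleanly. If one instead prefers to follow the module proof step for step, one could use~(\ref{pro:direct image finiteness}(2)) together with the one-relation presentation $R_i = R/(1 \sim e_i)$ (from which $e_j \sim 0$ for $j \neq i$ follows, since $e_j \sim e_j \cdot 1 \sim e_j e_i = 0$) to conclude that each $A_i$ is f.g.\ (resp.\ f.p.)\ as an $R$-algebra, and then invoke the componentwise-$\Hom$ argument to pass to the product.
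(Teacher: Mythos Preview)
Your proof is correct, and your main argument via the monoidal equivalence $\CAlg{R}\simeq\prod_i\CAlg{R_i}$ together with the $\Hom$-characterization of finite presentation and generation is a genuinely different route from the paper's. The paper simply says to rerun the proof of~(\ref{pro:module-finiteness-and-extensiveness}) in $\CAlg{R}$, noting that the same relations $e_j\sim\delta_{ij}$ present $R_i$ as a quotient of $R$ in that category; it then appeals to~(\ref{pro:direct image finiteness})(2) to conclude each $A_i$ is finitely generated (resp.\ presented) over $R$, and passes to the product $A\cong\prod_i A_i$.

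You are right to flag the subtlety: the module argument tacitly uses that finite products are finite coproducts, which fails for algebras, so the step ``hence $\prod_i A_i$ is finitely presented over $R$'' is not literally the same as in the module case. Your approach sidesteps this entirely by working with the componentwise description of $\Hom$ and of filtered colimits under the equivalence, which makes the passage to the product transparent. The paper's argument can certainly be completed (for instance by writing down an explicit finite presentation of $\prod_i A_i$ using the idempotents $e_i\in R$, or indeed by your $\Hom$-argument), but it leaves this to the reader. Your observation that the single relation $1\sim e_i$ already suffices to present $R_i$ is a nice sharpening of the paper's $n$ relations $e_j\sim\delta_{ij}$.
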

\begin{proof}
Again, one may apply the same proof as in~(\ref{pro:module-finiteness-and-extensiveness})
but in the category of $R$-algebras instead of $R$-modules, observing that the same relations
$e_j\sim\delta_{ij}$ define $R_i$ is as a quotient of $R$ in the category $R$-algebras.
\end{proof}

\section{Projective modules}
\label{section: projective modules}

\begin{definition}
	Let $R$ be a semiring. An $R$-module $P$ is said to be \emph{projective} if the following equivalent conditions hold:
	\begin{enumerate}
		\item 
		There exists a free module $F$ and a surjection $p:F \to P$ with a section $s$. 
		\item 
		The functor $\Hom(P,-)$ preserves surjectivity. 
	\end{enumerate}
\end{definition}

\begin{proposition}\label{proposition: fg proj is fp}
If $M$ is a finitely generated projective $R$-module, then $M$ is finitely presented. 
\end{proposition}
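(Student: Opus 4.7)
The plan is to exhibit $M$ explicitly as the quotient of a finite free module by a finite set of relations, working directly with congruences (as appropriate in the semiring setting) rather than with kernels.

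First, since $M$ is finitely generated, I would choose a surjection $p\: R^n \twoheadrightarrow M$ from a finite free $R$-module. By projectivity, condition (1) of the definition gives a section $s\: M\to R^n$ with $ps = 1_M$. The composite $e\dfn sp\: R^n\to R^n$ is then an $R$-linear idempotent endomorphism of $R^n$.

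Next I would show that $M$ is presented as
\[
	M \;\cong\; R^n\big/\bigl(\, e_i \sim e(e_i) \,\big\vert\, i=1,\dots,n\bigr),
\]
where $e_1,\dots,e_n$ denote the standard basis of $R^n$ and the right-hand side is the quotient of $R^n$ in $\Mod(R)$ by the congruence generated by those $n$ relations. Since this presentation has finitely many generators and finitely many relations, this will give finite presentation of $M$.

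To verify the claimed presentation, I would identify the kernel congruence of $p$, namely the sub-$R$-module $R^n\times_M R^n \pleq R^n\times R^n$, with the congruence generated by the $n$ pairs $(e_i,e(e_i))$. Containment $\supseteq$ is clear: each pair $(e_i, e(e_i))$ lies in the kernel congruence since $p\circ e = p\circ sp = (ps)\circ p = p$. For the reverse containment, call the congruence generated by those $n$ pairs $\sim$. By $R$-linearity, imposing $e_i\sim e(e_i)$ for all $i$ forces $x\sim e(x)$ for every $x\in R^n$. So if $p(x)=p(y)$, then applying $s$ gives $e(x)=e(y)$, whence $x\sim e(x)=e(y)\sim y$, as required.

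The only subtlety compared with the ring case is that one must reason with congruences in $\Mod(R)$ rather than with kernels, but this causes no difficulty: the congruence generated by finitely many relations in a finitely generated free $R$-module is exactly what is meant by a finite presentation (as discussed in section~\ref{section: preliminaries}), and the $R$-linearity of $e$ is enough to propagate the $n$ generating relations to the full kernel congruence. So there is no real obstacle; this is essentially the classical argument, suitably rephrased.
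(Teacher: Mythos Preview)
Your proof is correct and is essentially the same as the paper's: both show that $M$ is the coequalizer of $\id, sp\: R^n\rightrightarrows R^n$, which exhibits $M$ with $n$ generators and $n$ relations. The only difference is phrasing---the paper verifies the universal property of the coequalizer directly (constructing the factorization as $q\circ s$), whereas you identify the kernel congruence of $p$ element-wise with the congruence generated by the pairs $(e_i, e(e_i))$; these are two equivalent ways of checking the same thing.
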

\begin{proof}
Since $M$ is finitely generated and projective, we have a surjection $p:R^n \to M$ with a section $s$. We claim that $M$ is the coequalizer of $\id, s\circ p :R^n \to R^n$. Clearly, we have $p\circ \id = p\circ (s\circ p)$. In fact, suppose that we have the following diagram (of solid arrows):
\begin{equation}\label{eq: coeq}
\begin{tikzcd}
	R^n \arrow[r, shift right,swap,"s\circ p"] \arrow[r,shift left,"\id"] & R^n \arrow[r,"p"] \arrow[d,swap,"q"] & M \arrow[dl, dashed,"r=q\circ s"]\\
	& N & 
\end{tikzcd}
\end{equation}
With $r=q\circ s$, the diagram commutes since $s$ is a section of $p$. Moreover, since $p$ is surjective, such $r$ is unique once it exists. This shows that \eqref{eq: coeq} is a coequalizer diagram, and hence $M$ is finitely presented. 
\end{proof}

\begin{proposition}\label{proposition: proj is flat}
Projective modules are flat.
\end{proposition}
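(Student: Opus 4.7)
The plan is to invoke Lazard's theorem (\ref{thm: equivalent conditions for flatness}), specifically the equivalence of conditions (1) and (4). So it suffices to show that every map $\varphi\:N\to P$ from a finitely presented $R$-module $N$ factors through a finite free $R$-module.

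By the definition of projective, $P$ is a retract of some free $R$-module $F=R^{(I)}$: there are $R$-module maps $s\:P\to F$ and $p\:F\to P$ with $p\circ s=\id_P$. Given $\varphi$ as above, I would write $\varphi = p\circ(s\circ\varphi)$ and exploit that $N$ is finitely generated. Choosing finitely many generators $n_1,\dots,n_k$ of $N$, each element $s(\varphi(n_l))\in R^{(I)}$ is supported on a finite subset $J_l\subseteq I$, so setting $J:=J_1\cup\cdots\cup J_k$, the map $s\circ\varphi$ factors through the finite sub-free module $R^J\hookrightarrow R^{(I)}$. Composing with $p$ then yields a factorization of $\varphi$ through the finite free module $R^J$, verifying condition (1) of Lazard's theorem.

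I don't foresee a genuine obstacle: the main thing to keep straight is the convention that ``free'' refers to the coproduct $R^{(I)}$ (finitely supported functions $I\to R$) rather than the product $R^I$, so that finite generation of $N$ forces the image of $s\circ\varphi$ to lie in a finite sub-free module. No semiring-specific subtlety enters beyond Lazard's theorem itself, which has already been proved over arbitrary $\bN$-algebras in the preceding section; the argument otherwise mirrors the classical one over rings.
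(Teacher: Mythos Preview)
Your proof is correct. Both you and the paper appeal to Lazard's theorem~(\ref{thm: equivalent conditions for flatness}), but you target different equivalent conditions. You verify condition~(1): given a map from a finitely presented (indeed, merely finitely generated) $N$, you compose with the section $s$ into $R^{(I)}$ and observe that the finitely many generators of $N$ land in a finite sub-free summand $R^J$, through which $\varphi$ then factors. The paper instead verifies condition~(3) directly: it exhibits $P$ as the colimit of the diagram
\[
F \xrightarrow{\id,\, sp} F \xrightarrow{\id,\, sp} F \xrightarrow{\id,\, sp} \cdots
\]
and notes this is filtered because $sp$ is idempotent. Your route is arguably more elementary---it avoids checking filteredness of a particular diagram and uses only the finite-support property of $R^{(I)}$---while the paper's route has the virtue of producing an explicit filtered presentation of $P$ by free modules, which can be useful in its own right. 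Neither argument requires anything semiring-specific beyond Lazard's theorem.
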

\begin{proof}
Indeed, every projective module $M$ is a filtered colimit of free modules in the following way.
Take a free module $F$ and a surjection $p:F \to M$. Thus $p$ has a section $s$ since $M$ is projective.
Then $M$ is the colimit of the following diagram:
\begin{equation*}\label{eq: filtered colimit}
\begin{tikzcd}[row sep=large, column sep=large]
F
\arrow[loop, from=1-1, to=1-1, out=30,in=330,looseness=6,min distance=7mm,"\id"]
\arrow[loop, from=1-1, to=1-1, out=210,in=150,looseness=6,min distance=7mm,"sp"]
\end{tikzcd}
\end{equation*}
But this diagram is filtered, since $sp\circ sp =sp\circ\id$, and hence $M$ is a filtered colimit of free modules. It follows from~(\ref{thm: equivalent conditions for flatness}) that $M$ is flat. 
\end{proof}

\begin{proposition} \label{proposition: f.pres flats are projective}
Finitely presented flat modules are projective.
\end{proposition}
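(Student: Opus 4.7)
The plan is to invoke Lazard's theorem for semirings~(\ref{thm: equivalent conditions for flatness}), specifically the implication from flatness (condition (4)) to the factorization property (condition (1)): every map from a finitely presented $R$-module into $M$ factors through a finite free $R$-module.

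Suppose $M$ is finitely presented and flat. Since $M$ is itself finitely presented, we may apply condition (1) of~(\ref{thm: equivalent conditions for flatness}) to the identity map $\id_M\: M\to M$. This yields $n\geq 0$ together with a factorization
\[
	\id_M\: M \xrightarrow{\,s\,} R^n \xrightarrow{\,p\,} M.
\]
Because $p\circ s=\id_M$, the map $p$ is (split) surjective and $s$ is a section of $p$. By definition~(1) of projectivity, $M$ is projective.

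There is no real obstacle here beyond having Lazard's theorem available; the point is simply that over semirings, as over rings, flat + finitely presented gives that the identity factors through a finite free module, exhibiting $M$ as a retract of $R^n$.
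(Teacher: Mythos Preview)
Your proof is correct and uses essentially the same idea as the paper's: both invoke Lazard's theorem~(\ref{thm: equivalent conditions for flatness}) together with the finite presentation of $M$ itself to exhibit $M$ as a retract of a finite free module. Your version is in fact a bit more direct—you apply condition~(1) to $\id_M$ immediately, whereas the paper routes through a chosen presentation $R^m\rightrightarrows R^n\to M$, uses cofinality of finite frees in $FP_M$ to obtain $g\:R^n\to F$ with $gp=gq$ and $hg=f$, and then recovers the section $s$ from the coequalizer property.
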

\begin{proof}
Choose a presentation:
\begin{equation}
\begin{tikzcd}
R^m \arrow[r, shift right,swap,"q"] \arrow[r,shift left,"p"] & R^n \arrow[r,"f"] & M, \quad f=\textrm{coeq}(p,q).
\end{tikzcd}
\end{equation}
Since $M$ is flat, from~(\ref{thm: equivalent conditions for flatness}), the category $FP_M$ as in~(\ref{subsection: filtered colimits}) has a cofinal family of the finite free modules. Therefore, there exists a free module $F$ and maps as in the diagram
\begin{equation}
\begin{tikzcd}
R^m \arrow[r, shift right,swap,"q"] \arrow[r,shift left,"p"] & R^n \arrow[r,"f"] \arrow[d,"g"] & M \arrow[dashed, out=-90, in=-20,  dl,"s"]\\
& F \arrow[ru,swap, "h"] & 
\end{tikzcd}
\end{equation}
such that $gp = gq$ and $f=hg$. Since $M$ is the coequalizer of $p$ and $q$, there exists $s$ such that $sf = g$. It follows that
\begin{equation}
	h sf = h  g = f = 1_M f. 
\end{equation}
Since $f$ is surjective, we have $h s = 1_M$. Therefore $M$ is projective. 
\end{proof}

\begin{corollary}\label{corollary: equi flat proj}
For finitely presented modules, flatness is equivalent to projectivity. 
\end{corollary}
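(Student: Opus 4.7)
The plan is to observe that this corollary is an immediate consequence of the two preceding propositions, so essentially no new work is required; I only need to assemble them.

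First, in the direction projective $\Rightarrow$ flat, I would cite~(\ref{proposition: proj is flat}) directly. This direction does not even require the finite presentation hypothesis: every projective module, finitely presented or not, is flat because it is a retract of a free module and hence a filtered colimit of free modules along the idempotent $sp$, which is flat by Lazard's theorem~(\ref{thm: equivalent conditions for flatness}).

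For the converse direction, flat $+$ finitely presented $\Rightarrow$ projective, I would cite~(\ref{proposition: f.pres flats are projective}). This is the nontrivial direction, but it has already been proved: the idea there is that a finitely presented flat module $M$ admits, via the cofinality part of~(\ref{thm: equivalent conditions for flatness}), a factorization of its presentation through a finite free module $F$, and the universal property of the coequalizer then yields a section of the surjection $F \to M$.

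Combining these two implications gives the equivalence for finitely presented modules. No step presents an obstacle since the real content has been placed in~(\ref{proposition: proj is flat}) and~(\ref{proposition: f.pres flats are projective}); the only role of the corollary is to record the biconditional cleanly. The proof can therefore be written in one or two lines, simply as ``Combine~(\ref{proposition: proj is flat}) and~(\ref{proposition: f.pres flats are projective}).''
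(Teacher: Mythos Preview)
Your proposal is correct and matches the paper's proof exactly: the paper's proof reads simply ``By~(\ref{proposition: proj is flat}) and~(\ref{proposition: f.pres flats are projective}).''
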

\begin{proof}
	By~(\ref{proposition: proj is flat}) and~(\ref{proposition: f.pres flats are projective}).
\end{proof}

\begin{example} \label{ex:MacPherson}
\emph{Finite non-projective $\mathbb{B}$-modules.}
A.~W.~Macpherson proves in \cite{macpherson2019projective}, theorem 3.11, that a finite $\mathbb{B}$-module $M$
is projective if and only if it has unique irredundant primitive decompositions. Here, an element $x \in M$ is
said to be primitive if whenever $x =\sum_i x_i$, then $x=x_i$ for some $i$. Since $M$ is finite, any element of
$M$ can be written as a sum of primitive elements. A decomposition $x=\sum_i x_i$ is said to be 
primitive if the summands
$x_i$ are primitive, and is said to be irredundant if there are no order relations among different $x_i$;
equivalently, the family $(x_i)_i$ of terms in the decomposition is minimal.

A $\bB$-module is equivalent to an upper semilattice (following the terminology of Lurie~\cite{Lurie:categorical-logic-classt}), 
which is to say a partially ordered set such that
every finite set of elements (empty or not) has a supremum. Consider the $\bB$-module
	$$
	M=\bigoplus_{i=1}^3\bB x_i/(x_1+x_2\sim x_1+x_3 \sim x_2+x_3).
	$$
One checks $M$ is obtained from the following upper semilattice:
\[
\begin{tikzcd}[every arrow/.append style={dash}]
	& 1  \arrow[d] \arrow[dl] \arrow[dr] &\\
	x_1 \arrow[dr] & x_2 \arrow[d] & x_3 \arrow[dl]\\
	&0 & 
\end{tikzcd}
\]
So the $x_i$ are primitive and distinct. Because of the equalities
\[
1=x_1 + x_2 = x_1 + x_3 = x_2+x_3,
\]
the element $1$ has three different irredundant primitive decompositions, and hence by MacPherson's theorem,
$M$ is not projective. 
	
\end{example}

\begin{theorem}\label{thm:fg-projective-Rplus-modules-are-free2}
	Let $R$ be either $\bN$ or $F\cap\Rpos$, where $F$ is a subfield of $\bR$.
	Then every finitely generated projective $R$-module $M$ is free.
\end{theorem}
\begin{proof}
	Let $x_1,\dots,x_n$ be a minimal family of generators of $M$. 
	Such a family exists because $M$ is finitely generated.
	Consider the surjective morphism $p:R^n\to M$, given by $p(e_i)=x_i$.
	Because $M$ is projective, $p$ has a section $s:M\to R^n$.
	Write $s(x_i)=(r_{i1},\dots,r_{in})$.
	Then we have
		$$
		x_i = p(s(x_i)) = \sum_j r_{ij}x_j.
		$$

	Let us show $r_{ii}\geq 1$. 	
	If not, then since $M$ is additively cancellative (being a submodule of $R^n$ via $s$), 
	we would have $(1-r_{ii})x_i = \sum_{j\neq i}r_{ij}x_j$.
	But since $0<1-r_{ii}\leq 1$, 
	we have $1-r_{ii}\in R^\times$, because $R$ is either $\bN$ or a semifield.
	Hence we could write $x_i$ as a linear combination of the $x_j$ for $j\neq i$,
	which would contradict the minimality of the generating set. Therefore $r_{ii}\geq 1$.
	
	So, again since $M$ is additively cancellative, we have
		$$
		0 = (r_{ii}-1) x_i + \sum_{j\neq i} r_{ij} x_j.
		$$
	Since $M$ is negative free, we have $(r_{ii}-1) x_i = 0$ and $r_{ij}x_j = 0, j\neq i$.
	But each $x_j$ freely generates its span $Rx_j$, since each $x_j$ is nonzero and $M$ is a submodule of $R^n$.
	Therefore we have $r_{ii}=1$ and $r_{ij}=0$ for $j\neq i$.
	This means $s(x_i)=e_i$ for each $i$.
	It follows that $s$ is surjective, and hence an isomorphism.
\end{proof}

\begin{example}
We can now give many examples of finitely generated sub-$\Rpos$-modules $M$ of $\bR^n$ 
which are not projective.
Let $M$ be the $\Rpos$-cone in the half-space $\bR^{n-1}\times\Rpos$ with cross section $C$, 
where $C$ is a convex subset of $\bR^{n-1}\times \{1\}$. 
If $C$ is the convex hull of a finite set, then $M$ will be finitely generated (and only then).
But by~(\ref{thm:fg-projective-Rplus-modules-are-free2}), it will not be projective unless
$C$ is a simplex.
So for example the square cone is not projective, as was already established in~(\ref{eg:square-cone}).
\end{example}

\begin{remark}
	We note that the basis given by the proof of the theorem above, or any basis of a free module over $R$,
	is unique up to scalar multiplication. 
	Or equivalently, $\GL_n(R)=(R^\times)^n\rtimes S_n$. 
	(Proof: Since $R$ is negative free and has no zero divisors, the function $f\:R\to\bB$ with $f(r)=1$ 
	if and only if $r\neq 0$ is a homomorphism. Consider the induced map $\GL_n(R)\to \GL_n(\bB)$. Since 
	$\GL_n(\bB)$ coincides with the set of permutation matrices, and since $f^{-1}(0)=\{0\}$, 
	every element of $\GL_n(R)$ must be a permutation matrix times a diagonal matrix, 
	the diagonal entries of which are necessarily invertible.)
\end{remark}

\begin{remark}
The argument in the theorem above goes through slightly more generally.
It is enough for $R$ to be a negative-free, additively cancellative semiring which is
totally ordered by the relation $[x\leq y]:=[\exists z.\; x+z=y]$ 
and satisfies $0<x\leq 1 \;\Rightarrow\; x\in R^\times.$

The semirings $\bB$ and $A$ of (\ref{eg:weakly-connected}) are negative free but do not satisfy
all of the other properties. And indeed, 
there are finitely generated projective modules over each of them which are not free, or even locally free.
See (\ref{ex:monotone-modules-are-projective-over-B}) and (\ref{eg:weakly-connected}).
\end{remark}

\begin{example}
	\emph{A flat non-projective $\Rpos$-module.}
	Let $M$ denote the open quadrant
	$\{(x,y)\in \Rpos^2\mid x,y>0 \text{ or } x=y=0\}$.
	Then $M$ is a sub-$\Rpos$-module of $\Rpos^2$.
	It is flat over $\Rpos$ because it is an increasing union of free modules of rank $2$:
	$$M=\colim_{\varepsilon\to 0}\Rpos(1,\varepsilon)\oplus\Rpos(\varepsilon,1).$$
	To see $M$ is not projective, suppose there were a set $S$ and a map $\Rpos^{(S)}\to M$ with a section $s$.
	Choose two linearly independent vectors $v,w\in M$. 
	Then $s(v)$ and $s(w)$ would have support contained in a finite subset $T\subseteq S$. 
	Since $v$ and $w$ span $\bR^2=\bR\otimes_{\Rpos} M$ over $\bR$,
	the image of the base change $\bR\otimes_{\Rpos} M\to \bR^{(S)}$ would lie
	in $\bR^{T}$. Therefore the image of the original map $s$ would lie in $\bR^T\cap \Rpos^{(S)} = \Rpos^T$.
	This would imply that the composition $\Rpos^T\to\Rpos^{(S)}\to M$ is surjective,
	which would contradict the fact that $M$ is not finitely generated.
\end{example}

\section{Dualizable modules}
\label{section: dualizable modules}

In what follows, let $R$ be a semiring. 
Recall that a cocontinuous functor is a functor which preserves small colimits.

\begin{definition}
An $R$-module $M$ is \emph{dualizable} if the functor $$\Hom_{\Mod(R)}(M,-):\Mod(R) \to \Mod(R)$$ is cocontinuous.
\end{definition}

We will show below that $M$ is dualizable if and only if it has a dual in the sense of the theory of monoidal categories. We prefer the definition above because it makes it obvious that dualizability is property, as opposed to a structure.

For any $R$-module $M$, dualizable or not, we define the \emph{dual module}
	$$
	M^\vee = \Hom_R(M,R).
	$$

\begin{proposition}\label{pro:tensor-with-dual}
Let $M$ be a dualizable $R$-module. Then  $$ M^\vee \otimes_R N\longisomap\Hom_{\Mod(R)}(M,N).$$ 
\end{proposition}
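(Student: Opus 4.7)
The plan is to build the natural comparison map and then show it is an isomorphism by a Yoneda-style argument: both sides, viewed as functors of $N$, are cocontinuous, and they agree on the generator $N=R$, so they must agree everywhere.

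First I would construct the natural map $\alpha_N \: M^\vee \otimes_R N \to \Hom_R(M,N)$. The pairing $M^\vee \times N \to \Hom_R(M,N)$ sending $(\phi,n)$ to the $R$-linear map $m \mapsto \phi(m)\,n$ is $R$-bilinear in $(\phi,n)$, so it descends to an $R$-linear map out of $M^\vee \otimes_R N$. This is visibly natural in $N$.

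Next I would verify that both sides are cocontinuous functors of $N$. The functor $N \mapsto M^\vee \otimes_R N$ is a left adjoint (to $\Hom_R(M^\vee, -)$), hence preserves all small colimits. The functor $N \mapsto \Hom_R(M,N)$ is cocontinuous by the very definition of dualizability of $M$. When $N=R$, the map $\alpha_R\: M^\vee \otimes_R R \to \Hom_R(M,R) = M^\vee$ reduces (via the unitor of the tensor product) to the identity on $M^\vee$, so it is an isomorphism.

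To conclude, present an arbitrary $N$ as a coequalizer of free modules
\begin{equation*}
\begin{tikzcd}
R^{(J)} \arrow[r,shift left] \arrow[r,shift right] & R^{(I)} \arrow[r] & N.
\end{tikzcd}
\end{equation*}
Applying the two cocontinuous functors gives coequalizer diagrams on both sides; and on each free module $R^{(I)}$ both sides compute $(M^\vee)^{(I)}$ (the LHS because tensoring commutes with coproducts, the RHS by cocontinuity of $\Hom_R(M,-)$), with $\alpha$ inducing the identity of $(M^\vee)^{(I)}$. Naturality of $\alpha$ means these identifications are compatible with the parallel maps in the coequalizer, so $\alpha_N$ is identified with the induced map between two identical coequalizer diagrams, and is therefore an isomorphism.

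There is no real obstacle: the substantive input is the cocontinuity of $\Hom_R(M,-)$, which is the definition of dualizability, and the rest is a standard generators-and-relations argument that any cocontinuous endofunctor of $\Mod(R)$ is determined by its value on $R$. The only thing to double-check is that the natural transformation $\alpha$ really restricts to the identity map under the stated identifications on free modules, which is a direct unwinding of definitions.
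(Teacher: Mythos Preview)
Your proposal is correct and takes essentially the same approach as the paper: both arguments exploit that $M^\vee\otimes_R\vbl$ and $\Hom_R(M,\vbl)$ are cocontinuous and agree on $R$, and then conclude by expressing a general $N$ as a colimit of copies of $R$. The only cosmetic difference is that the paper uses the canonical colimit $N=\colim_{R\to N}R$ rather than a chosen coequalizer presentation, and does not bother to name the comparison map explicitly.
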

\begin{proof}
Indeed, for $N=R$, it is clear.
Write $G(N)=\Hom_{\Mod(R)}(M,N)$,
Since $M$ is dualizable, $G$ is cocontinuous. Therefore for any $R$-module $N$, we have
\begin{multline*}
G(N)=G(\underset{R\to N}{\colim}~R)=\underset{R\to N}{\colim}~G(R)=\underset{R\to N}{\colim}~M^\vee \\
=\underset{R\to N}{\colim}~(M^\vee\otimes_R R) 
=M^\vee  \otimes_R \underset{R\to N}{\colim}~R = M^\vee\otimes_R N. 
\end{multline*}
\end{proof}

\subsection{Triangle axioms}\label{subsection: triangle axioms}
Our notion of dualizability agrees with that in the theory of monoidal categories, as for instance
found in the book of Etingof--Gelaki--Nikshych--Ostrik~\cite{Etingof2016tensor-categories}, as we now explain.

If $M$ is dualizable, then the unit and counit maps of the tensor-hom adjunction at the $R$-module $R$
\begin{equation}
\label{unit-counit-maps2}
	\eta_R\:R \longmap \Hom_R(M,M\otimes_R R), \quad \varepsilon_R\:M\otimes \Hom_R(M,R) \longmap R	
\end{equation}
give rise by~(\ref{pro:tensor-with-dual}) to maps purely expressed in terms of the monoidal
structure $\otimes_R$:
\begin{equation}
\label{unit-counit-maps}
\eta:R \to M^\vee\otimes_RM, \quad \varepsilon:M\otimes_RM^\vee \to R. 
\end{equation}
The triangle axioms for adjoint functors imply the triangle axioms for duals, namely the commutativity of the following diagrams:
\begin{equation}
\begin{tikzcd}[row sep=1cm, column sep = 1.3cm]\label{eq: triangle1}
	M\otimes_RR \arrow[r,"1_M\otimes_R\eta"] \arrow[d,swap,"\simeq"]  & M\otimes_RM^\vee\otimes_RM \arrow[dl,"\varepsilon\otimes_R1_M"] \\
	R\otimes_RM
\end{tikzcd}
\end{equation}
\begin{equation}
\begin{tikzcd}[row sep=1cm, column sep = 1.3cm]\label{eq: triangle2}
	R\otimes_RM^\vee \arrow[r,"\eta\otimes_R1_{M^\vee}"] \arrow[d,swap,"\simeq"]  & M^\vee\otimes_RM\otimes_RM^\vee \arrow[dl,"1_{M^\vee}\otimes_R\varepsilon"] \\
	M^\vee\otimes_RR
\end{tikzcd}
\end{equation}
(See~\cite{Etingof2016tensor-categories}, p.\ 40, definition 2.10.2.) Thus $M^\vee$ is a right dual of $M$,
and also a left dual because $\otimes$ is symmetric.

Conversely, suppose $M$ has a right dual $M^*$, meaning there are maps $\eta\:R\to M^*\otimes_R M$ and $\varepsilon\:M\otimes M^*\to R$ such that the triangle axioms (\ref{eq: triangle1})--(\ref{eq: triangle2}) are satisfied but with $M^*$ instead of $M^\vee$.
Then for any $R$-module $N$, we can apply $\vbl\otimes_R N$ to the maps $\eta$ and $\varepsilon$ and hence
obtain an adjunction $M^*\otimes_R\vbl \vdash M\otimes_R\vbl$.
Therefore $\Hom_R(M,\vbl)$ agrees with $M^*\otimes_R\vbl$, and hence it preserves all colimits.
Therefore $M$ is dualizable.

\begin{proposition}\label{proposition: dualizability is an fpqc-local property}
	Dualizability is an fpqc-local property.
\end{proposition}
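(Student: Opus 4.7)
The plan is to descend both the dual and the unit/counit maps along the faithfully flat morphism. So let $\alpha\:R\to R'$ be faithfully flat with $M_{R'}$ dualizable, and denote the dual, unit, and counit over $R'$ by $M^*_{R'}=(M_{R'})^\vee$, $\eta'$, and $\varepsilon'$. I will take as the target the triangle-identity characterization of dualizability from~(\ref{subsection: triangle axioms}): it suffices to exhibit an $R$-module $M^*$ together with maps $\eta\:R\to M^*\otimes_R M$ and $\varepsilon\:M\otimes_R M^*\to R$ satisfying the triangle identities.

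The first task is to descend $M^*_{R'}$ to an $R$-module. The key input is that dualization commutes with base change for dualizable modules: for any $R'$-algebra $S$, the tensor-hom adjunction and~(\ref{pro:tensor-with-dual}) combine to give a canonical isomorphism $M^*_{R'}\otimes_{R'}S\isomap (M_{R'}\otimes_{R'}S)^\vee$. Applying this through the two structure maps $d_1,d_2\:R'\to R''=R'\otimes_R R'$ and transporting the canonical descent isomorphism on $M_{R'}$ through $(-)^\vee$, one obtains an $R''$-linear isomorphism on $M^*_{R'}$ which should be a descent datum. Its cocycle condition follows by functoriality from that of $M_{R'}$. Once these formalities are in place, $M^*_{R'}$ descends to an $R$-module $M^*$ by Theorem~\ref{thm:faithfully-flat-descent} together with~(\ref{proposition: eff descent = comonadic}).

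Next I would descend $\eta'$ and $\varepsilon'$. Because they are the canonical (co)evaluation maps of a dualizable module, naturality forces them to respect the descent data on $M_{R'}$ and on $M^*_{R'}$, so the equivalence of~(\ref{proposition: eff descent = comonadic}) produces $R$-linear maps $\eta$ and $\varepsilon$ as required. Each triangle identity is an equality of $R$-linear maps whose base change to $R'$ recovers the corresponding identity for $(\eta',\varepsilon')$; since $\alpha$ is pure by~(\ref{proposition: faithfully flat descent}), these identities already hold over $R$, and $M$ is dualizable.

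The hard part will be the descent of $M^*_{R'}$. Since $(-)^\vee$ is contravariant, applying it to the descent isomorphism $\phi\:M_{R'}\otimes_{R',d_2}R''\isomap R''\otimes_{R',d_1}M_{R'}$ reverses the direction, and one has to invert and reshuffle (using the base-change isomorphism at $d_1,d_2$) to land in the form of a descent datum $M^*_{R'}\otimes_{R',d_2}R''\isomap R''\otimes_{R',d_1}M^*_{R'}$. Verifying the cocycle condition over $R'''=R'\otimes_R R'\otimes_R R'$ demands the same kind of bookkeeping. By contrast, the descent of the (co)evaluation maps and the transfer of the triangle identities are essentially formal and should not pose difficulty.
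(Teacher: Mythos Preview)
Your proof is correct and follows the same strategy as the paper's: invoke the triangle-axiom characterization from~(\ref{subsection: triangle axioms}) and descend the dual module together with the (co)evaluation maps along the faithfully flat cover. The paper compresses all of this into the single sentence ``The triangle axioms above are expressed in fpqc-local terms,'' leaving implicit exactly the descent details you have spelled out.
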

\begin{proof}
	The triangle axioms above are expressed in fpqc-local terms.
\end{proof}

Once again, in the language of stacks, one would say that dualizable modules form a substack of
the stack of all modules in the fpqc topology.

\begin{proposition}\label{proposition: dualizable iff f.g. proj.}
$M$ is dualizable if and only if $M$ is a finitely generated projective module. 
\end{proposition}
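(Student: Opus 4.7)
The plan is to prove both directions by exploiting the relationship between the unit--counit description of dualizability (from~\S\ref{subsection: triangle axioms}) and the characterization of projectives as retracts of free modules. Throughout, nothing beyond the tensor--hom adjunction and the triangle identities is needed, so the classical argument over rings should transport verbatim to semirings.

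\textbf{Dualizable implies finitely generated projective.} First I would use the unit $\eta\:R\to M^{\vee}\otimes_R M$ from~\eqref{unit-counit-maps}, and write
$$\eta(1)=\sum_{i=1}^{n}\phi_{i}\otimes m_{i}$$
for some finite collection $\phi_{i}\in M^{\vee}$ and $m_{i}\in M$; such a finite expression exists because elements of $M^{\vee}\otimes_R M$ are finite sums of pure tensors. Chasing an arbitrary $m\in M$ through the triangle diagram~\eqref{eq: triangle1}, together with the fact that $\varepsilon$ is, under the identification of~(\ref{pro:tensor-with-dual}), the evaluation map $m\otimes\phi\mapsto\phi(m)$, yields the identity $m=\sum_{i}\phi_{i}(m)\,m_{i}$. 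Then the maps $s\:M\to R^{n}$, $s(m)=(\phi_{1}(m),\dots,\phi_{n}(m))$ and $p\:R^{n}\to M$, $p(e_{i})=m_{i}$, satisfy $p\circ s=\id_{M}$, exhibiting $M$ as a retract of $R^{n}$. Hence $M$ is finitely generated and projective.

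\textbf{Finitely generated projective implies dualizable.} Here I would prefer the cocontinuity definition. If $M$ is finitely generated projective, there is a surjection $p\:R^{n}\twoheadrightarrow M$ with section $s$, so $\Hom_R(M,\vbl)$ is a retract of $\Hom_R(R^{n},\vbl)\cong(\vbl)^{n}$. Since the latter is cocontinuous (finite products and direct sums agree, and $\Hom_R(R,\vbl)$ is the identity), and retracts of cocontinuous functors are cocontinuous, it follows that $M$ is dualizable. Equivalently, one can transport the canonical coevaluation on $R^{n}$ (given by $1\mapsto\sum e_{i}^{\vee}\otimes e_{i}$) and the evaluation on $R^{n}$ along the retract data to obtain explicit unit and counit for $M$ satisfying the triangle identities of~\S\ref{subsection: triangle axioms}.

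\textbf{Expected difficulty.} I do not expect a genuine obstacle, because the two arguments are purely formal consequences of the monoidal/adjoint structure, both of which are available for $\Mod(R)$ over an arbitrary semiring by the material of~\S\ref{section: preliminaries}. The only place that requires a small sanity check is the identification of $\varepsilon$ with evaluation under~(\ref{pro:tensor-with-dual}), which is immediate from tracing the unit--counit maps~\eqref{unit-counit-maps2} through the isomorphism $M^{\vee}\otimes_R N\longisomap\Hom_R(M,N)$ at $N=R$.
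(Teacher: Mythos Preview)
Your proposal is correct and essentially matches the paper's proof. The ``only if'' direction is identical (write $\eta(1)$ as a finite sum and read off a retraction from the triangle identity), and for the ``if'' direction the paper appeals to the general fact that dualizable objects in a closed symmetric monoidal category are closed under retracts and finite direct sums, which is exactly what your cocontinuity-of-retracts argument (or your alternative of transporting $\eta,\varepsilon$ along the retract) unpacks.
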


\begin{proof}
We copy the argument for rings from the nLab~\cite{nlab:dualizable-modules}.

(If): Note that $\Mod(R)$ is a closed symmetric monoid category, and in a closed symmetric monoidal category, dualizable objects are closed under retractions and direct sums. Since $R$ itself is dualizable, it follows that $M$ is dualizable. 

(Only if): With the same notation as in (\ref{subsection: triangle axioms}), write $\eta(1_R)=\sum_{i=1}^n f_i\otimes m_i$, with $m_i\in M$, $f_i\in \Hom(M,R)$. 
Then from the triangle identities, for each $m \in M$ and $\varphi \in \Hom(M,R)$, we have
\begin{equation}\label{eq: 1}
\sum_{i=1}^nm_if_i(m)=m,
\end{equation}
\begin{equation}\label{eq: 2}
\sum_{i=1}^n\varphi(m_i)f_i=\varphi.
\end{equation}
Now consider the map $f:R^n \to M$ sending $(r_i)$ to $\sum_{i=1}^nr_im_i$. By (\ref{eq: 1}), this map is surjective and hence $M$ is finitely generated. Also consider 
the map $g:M \to R^n$ sending $m \in M$ to $(f_i(m))$. From \eqref{eq: triangle1}, we have that $fg=1_M$, showing that $M$ is projective.
\end{proof}

\section{Locally free modules}
\label{section: locally free modules}

\begin{definition}\label{def:fpqc-locally-free}
An $R$-module $M$ is \emph{fpqc-locally free} if there exist a finite family of flat $R$-algebras $(R_i)_{i\in I}$ such that $\prod_i R_i$ is faithfully flat over $R$ and each 
$R_i\otimes_R M$ is a free $R_i$-module. It is \emph{fpqc-locally finite free} if there exists such a family
for which each $R_i\otimes_R M$ is free of finite rank.
Similarly, it is \emph{Zariski-locally (finite) free} if each $R_i$ can be taken to be of 
the form $R[1/a_i]$.
\end{definition}

\begin{example}
\label{ex:locally frees over Booleans}
Every Zariski-locally free module over a local semiring is free.

Every fpqc-locally free $\bB$-module $M$ is free. Indeed, by Golan's theorem~(\ref{thm:Golan}),
for any faithfully flat map $\mathbb{B}\to \prod_i R_i$ as above, there exists an $i_0$ such that
there exists a map $R_{i_0}\to \bB$.
Therefore if each $R_i\otimes_\bB M$ is free, then so is $M=\bB\otimes_{R_{i_0}} R_{i_0}\otimes_\bB M$.

More generally, over $\bB$ any locally trivial object of any type is trivial. This was used in Culling's thesis (example 4.20 of \cite{culling2019etale}) to show that the \'etale fundamental group of $\Spec(\bB)$ is trivial.
\end{example}

\begin{proposition}\label{pro: locally frees are dualizable}
	Every fpqc-locally finite free module is dualizable.
\end{proposition}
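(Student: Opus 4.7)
The plan is to combine two ingredients: dualizability of finite free modules (elementary) and faithfully flat descent of dualizability, already in~(\ref{proposition: dualizability is an fpqc-local property}).

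First, I would verify that $R^n$ is dualizable for any $n \geq 0$ by exhibiting $(R^n)^\vee = R^n$ together with the standard coevaluation $\eta\:R \to R^n\otimes_R R^n$ sending $1 \mapsto \sum_i e_i \otimes e_i$ and evaluation $\varepsilon\:R^n\otimes_R R^n \to R$ sending $e_i \otimes e_j \mapsto \delta_{ij}$. Checking the triangle identities (\ref{eq: triangle1}) and (\ref{eq: triangle2}) is a routine computation identical to the ring case, since both $\eta$ and $\varepsilon$ are expressed purely in terms of the monoidal operations on $R$.

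Next, given a cover $(R \to R_i)_{i\in I}$ as in~(\ref{def:fpqc-locally-free}) witnessing that $M$ is fpqc-locally finite free, I would set $R' = \prod_i R_i$, which is faithfully flat over $R$ by hypothesis. By~(\ref{pro:component cover of disjoint union}), $R'\otimes_R M$ corresponds under the equivalence $\Mod(R') \simeq \prod_i \Mod(R_i)$ to the tuple $(R_i \otimes_R M)_i$, each component being finite free over $R_i$ and therefore dualizable by the first step. Since the equivalence of~(\ref{pro:component cover of disjoint union}) is monoidal (tensor products over a product semiring being computed componentwise), $R' \otimes_R M$ is itself dualizable over $R'$. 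Applying~(\ref{proposition: dualizability is an fpqc-local property}) to the faithfully flat morphism $R \to R'$ then descends dualizability back to $M$ over $R$.

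I do not foresee a genuine obstacle here; the argument is essentially just chaining together the previously established pieces. The only subtle point worth flagging is that the monoidal decomposition of $\Mod(R')$ into the $\Mod(R_i)$ respects the triangle axioms, so that dualizability really is a componentwise condition, but this is automatic from the componentwise description of $\otimes_{R'}$ implicit in~(\ref{pro:component cover of disjoint union}).
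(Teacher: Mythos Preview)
Your proof is correct and follows the same approach as the paper: invoke that finite free modules are dualizable and then descend via~(\ref{proposition: dualizability is an fpqc-local property}). The paper's proof is a one-liner that leaves implicit the passage through $R'=\prod_i R_i$ and the componentwise check you spell out, but the substance is identical.
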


\begin{proof}
	This follows from~(\ref{proposition: dualizability is an fpqc-local property}),
	since all free modules of finite rank are dualizable.
\end{proof}

\begin{corollary}\label{proposition: fpqc-locally free is f.pres.proj}
Every fpqc-locally finite free $R$-module is finitely presented and projective. 
\end{corollary}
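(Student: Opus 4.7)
The statement is essentially a direct corollary chaining together three prior results, so the plan is to simply assemble them in the right order rather than do any new work.

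First, I would invoke Proposition~\ref{pro: locally frees are dualizable} to conclude that any fpqc-locally finite free $R$-module $M$ is dualizable. Next, by Proposition~\ref{proposition: dualizable iff f.g. proj.}, dualizability is equivalent to being finitely generated and projective, which immediately yields the ``projective'' half of the claim together with finite generation. Finally, to upgrade from finitely generated to finitely presented, I would apply Proposition~\ref{proposition: fg proj is fp}, which states precisely that every finitely generated projective $R$-module is finitely presented.

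There is no real obstacle here; the substantive content has already been done in the preceding sections. The only thing to be careful about is that the finiteness hypothesis in the statement (\emph{finite} free on each fpqc cover) is what supplies the finite generation through dualizability, since Proposition~\ref{pro: locally frees are dualizable} requires the locally free pieces to be of finite rank. So the proof is a two-line combination of (\ref{pro: locally frees are dualizable}), (\ref{proposition: dualizable iff f.g. proj.}), and (\ref{proposition: fg proj is fp}).
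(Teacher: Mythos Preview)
Your proposal is correct and follows essentially the same route as the paper, which cites only (\ref{pro: locally frees are dualizable}) and (\ref{proposition: dualizable iff f.g. proj.}). Your addition of (\ref{proposition: fg proj is fp}) to explicitly pass from finitely generated to finitely presented is a harmless clarification that the paper leaves implicit.
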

\begin{proof}
(\ref{pro: locally frees are dualizable}) and (\ref{proposition: dualizable iff f.g. proj.}).
\end{proof}

\begin{example}
The converse of~(\ref{proposition: fpqc-locally free is f.pres.proj}) holds over $\Rpos$, 
by~(\ref{thm:fg-projective-Rplus-modules-are-free2}),
but it does not hold in general. Indeed, it was already observed in~\cite{jun2024vector}, example 3.21, 
that the $3$-element $\bB$-module is projective but not free and hence not Zariski-locally free. 
Slightly more generally, we have the following.
\end{example}

\begin{example} \label{ex:monotone-modules-are-projective-over-B}
Let $P$ be a finite partially ordered set, and let $A$ denote the $\bB$-algebra of monotone functions $P\to\bB$.
It is finitely generated. We will show that $A$ is projective over $\bB$ but unless $P$ is discrete (that is, satisfies $x\leq y \Leftrightarrow x=y$), 
$A$ is not free and hence not fpqc-locally free, by~(\ref{ex:locally frees over Booleans}).

To see it is projective, consider the $\bB$-module map $p\:\bB^P\to A$ determined by
$e_x\mapsto f_x$, where $f_x(y)$ is $1$ if $y\geq x$ and is $0$ otherwise. Then $p$ is a retraction 
onto the subalgebra $A\subseteq \bB^P$: for any $g\in A$, we have $g=\sum_{g(x)=1} e_x$ and hence 
	$$
	p(g)=\sum_{g(x)=1} p(e_x) = \sum_{g(x)=1}f_x.
	$$
Therefore we have $p(g)(y)=1$ if and only if there exist $x\leq y$ such that $g(x)=1$, 
which holds if and only if $g(y)=1$. Therefore $p(g)=g$.

Second, observe that $f_x$ is primitive in the sense of~(\ref{ex:MacPherson}). 
Indeed, if $f_x = \sum_i g_i$, 
then we have $1=f_x(x)=\sum_i g_i(x)$, and hence there exists an $i_0$ such that $g_{i_0}(x)=1$. 
Therefore $g_{i_0}(y)=1$ for all $y\geq x$ and hence $g_{i_0}\geq f$. But $g_i\leq f_x$ for any $i$. 
Therefore $g_{i_0}=f_x$

Now suppose $A$ is freely generated by a subset $S\subseteq A$. 
Since any $f_x$ is primitive, it is must be an element of $S$.
If there exist two comparable elements $x\leq y$, then we have $f_y\leq f_x$. 
It follows that $f_y=f_x$ and hence $y=x$.
\end{example}

\begin{remark}
\label{rmk:monotone-R_+-functions}
We note that the analogous construction over $\Rpos$ might or might not be projective. So let $P$
again be a finite partially ordered set, but now let $A$ be the $\Rpos$-algebra of monotone functions
$P\to\Rpos$. If $P$ is discrete, then we have $A=\prod_P\Rpos$, which is free as an $\Rpos$-module. If $P$ is
not discrete, then $A$ still
might or might not be free, or equivalently projective by~(\ref{thm:fg-projective-Rplus-modules-are-free2}). 

A free example is given by $P=\{0\to 1\}$: then $A$ is free with basis $(0,1),(1,1)$.

For a non-free example, let $P$ be the partially ordered set
\[
\begin{tikzcd}[every arrow/.append style={dash}]
	& 4  \arrow[dl] \arrow[dr] &\\
	2 \arrow[dr] && 3 \arrow[dl] \\
	&1
\end{tikzcd}
\]
For any $x\in P$, consider the function $f_x\:P\to\Rpos$ defined above. Then we have
$$
f_1 = (1,1,1,1),\quad f_2=(0,1,0,1),\quad f_3=(0,0,1,1),\quad f_4=(0,0,0,1).
$$
Each of these elements is $\Rpos$-linearly primitive. 
But there is another extremal ray, given by the span of the primitive element
$$
g=(0,1,1,1).
$$
One checks these five elements generate $A$ as an $\Rpos$-module. 
But observe that they are not independent, because of the relation $g+f_4=f_2+f_3$. 
So $A$ is not free over $\Rpos$, and hence nor is it projective or flat.
\end{remark}

\begin{example} \label{eg:weakly-connected}
The divergence between projective modules and locally free modules is not restricted to
algebras over $\bB$ or other semirings which are not additively cancellative.
There are in fact examples over flat $\Rpos$-algebras. 
So it is far from a pathological phenomenon.

For instance, let $A=\{(b,c)\in\Rpos\times\Rpos\mid b\leq c\}$. 
So $A$ is the set of monotone functions $\bB\to\Rpos$ considered above.	
Then $A$ is free of rank $2$ over $\Rpos$, and is in particular flat.
	
Let $t$ denote the idempotent $(0,1)\in A$.
Then $A$ agrees with $\Rpos[t]/(t^2\sim t)$ since the composition
$$\Rpos\oplus \Rpos t \longisomap \Rpos[t]/(t^2\sim t) \longlabelmap{t\mapsto (0,1)} A$$
is an isomorphism.
Consider the $A$-algebra $A[1/t]=A/(t\sim 1)=\Rpos$. 
Since $t$ is idempotent, $A[1/t]$ is isomorphic as an $A$-module to the ideal $(t)$, 
which is finitely generated and projective as an $A$-module.
However its rank jumps: at $t=0$, the rank is $0$,
but at $t=1$, it is $1$. 

But this implies $A$ is not locally free. 
Indeed, $A$ is a local semiring: if $a_1+a_2$ is a unit, then for elementary reasons each $a_i$ is of
the form $(b_i,b_i)$ and one of the $b_i$ must be nonzero, and hence $a_i$ will be a unit. 
Therefore if $\prod_{i=1}^m A[1/a_i]$ is a Zariski cover, then some factor 
$A[1/a_i]$ must agree with $A$, and hence if $A[1/t]$ becomes free over a Zariski cover, it must already have been free over $A$, which it is not. 
\end{example}

\section{Line bundles}
\label{section: line bundles}

The purpose of this section is to show that all the usual definitions of \emph{line bundle} over a ring
remain equivalent over all semirings.

\begin{definition}\label{definition: invertible module}
$M$ is \emph{invertible} if the functor $M\otimes_R\vbl:\Mod(R) \to \Mod(R)$ is an equivalence.
\end{definition}

One could equivalently require that its right adjoint $\Hom_R(M,\vbl)$ be an equivalence.
This definition agrees with the one found in Etingof \emph{et al.}\  
\cite{Etingof2016tensor-categories} (p.~43, definition 2.11.1), by the following.

\begin{proposition}
	\label{pro:invertible-module-equivalent-conditions}
An $R$-module $M$ is invertible if and only if it is dualizable and the unit map $\eta$ and counit map $\varepsilon$ are isomorphisms.
\end{proposition}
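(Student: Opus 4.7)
The plan is to unpack both directions through the tensor-hom adjunction, using~(\ref{pro:tensor-with-dual}) to identify the right adjoint of $M \otimes_R \vbl$ with $M^\vee \otimes_R \vbl$ once $M$ is known to be dualizable.

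For the forward direction, suppose $M$ is invertible. Then $M \otimes_R \vbl$ is an equivalence, so its right adjoint $\Hom_R(M, \vbl)$ is an inverse equivalence; in particular, $\Hom_R(M,\vbl)$ preserves all colimits, so $M$ is dualizable. By~(\ref{pro:tensor-with-dual}), the right adjoint is canonically identified with $M^\vee \otimes_R \vbl$. The unit and counit of the adjunction $(M\otimes_R\vbl) \dashv (M^\vee \otimes_R \vbl)$ at a general object $N$ are obtained by tensoring $\eta$ and $\varepsilon$ of~\eqref{unit-counit-maps} with $1_N$; evaluating at $N=R$ recovers $\eta$ and $\varepsilon$ themselves. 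Since an adjoint equivalence has invertible unit and counit, $\eta$ and $\varepsilon$ are isomorphisms.

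For the reverse direction, suppose $M$ is dualizable with $\eta$ and $\varepsilon$ isomorphisms. By the discussion in~(\ref{subsection: triangle axioms}), tensoring $\eta$ and $\varepsilon$ with $1_N$ produces an adjunction $(M\otimes_R\vbl) \dashv (M^\vee\otimes_R\vbl)$; the triangle axioms~\eqref{eq: triangle1}--\eqref{eq: triangle2} survive tensoring because $\otimes_R$ is a bifunctor. Since tensoring preserves isomorphisms, the unit $1_N \otimes \eta$ and counit $\varepsilon \otimes 1_N$ are isomorphisms for every $N$, so $M \otimes_R \vbl$ is an equivalence with inverse $M^\vee \otimes_R \vbl$, i.e., $M$ is invertible.

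No serious obstacle is anticipated: the whole proof is a formal consequence of the tensor-hom adjunction together with the identification of the right adjoint given by~(\ref{pro:tensor-with-dual}). The only place one has to be slightly careful is in arguing that the unit and counit of the adjunction $(M\otimes_R\vbl) \dashv (M^\vee\otimes_R\vbl)$ do indeed arise from $\eta$ and $\varepsilon$ by tensoring, so that the values at $N=R$ recover $\eta$ and $\varepsilon$; this is standard but relies on the compatibility of the adjunction isomorphism in~(\ref{pro:tensor-with-dual}) with the tensor-hom adjunction, which is what the triangle axioms in~(\ref{subsection: triangle axioms}) encode.
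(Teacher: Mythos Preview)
Your proof is correct and follows essentially the same approach as the paper's: both directions go through the identification of $\Hom_R(M,\vbl)$ with $M^\vee\otimes_R\vbl$ via~(\ref{pro:tensor-with-dual}) and the observation that the unit and counit of the resulting adjunction are $\eta\otimes 1_N$ and $\varepsilon\otimes 1_N$. If anything, you are more explicit than the paper in the forward direction---the paper stops after noting that $\Hom_R(M,\vbl)$ is cocontinuous, leaving the fact that $\eta$ and $\varepsilon$ are isomorphisms implicit, whereas you spell out that an adjunction whose left adjoint is an equivalence has invertible unit and counit, and that evaluating at $N=R$ recovers $\eta$ and $\varepsilon$.
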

\begin{proof}
If the functor $M\otimes_R\vbl$ is an equivalence, 
then its right adjoint $\Hom_R(M,\vbl)$ is an equivalence and hence cocontinuous.

Conversely, suppose $M$ is dualizable. Then the functor $M^\vee\otimes_R\vbl$ is the right adjoint of
$M\otimes_R\vbl$. If the unit (and counit) of $M$ is an isomorphism of modules, then the unit (and counit)
of the adjunction is an isomorphism of functors. Therefore $M^\vee\otimes_R\vbl$ is a quasi-inverse of
$M\otimes_R\vbl$, and hence the latter is an equivalence.
\end{proof}

\begin{corollary}\label{cor: invertibility is an fpqc-local property}
	Invertibility is an fpqc-local property.
\end{corollary}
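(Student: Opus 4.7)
The plan is to reduce this to two facts already established in the paper: that dualizability is fpqc-local (Proposition~\ref{proposition: dualizability is an fpqc-local property}) and that being an isomorphism is fpqc-local (immediate from the definition of faithfulness, together with faithful flatness of the cover). The characterization of invertibility from Proposition~\ref{pro:invertible-module-equivalent-conditions} is what makes this reduction possible.

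Concretely, suppose $R\to\prod_{i\in I} R_i$ is an fpqc cover and each $R_i\otimes_R M$ is an invertible $R_i$-module. By Proposition~\ref{pro:invertible-module-equivalent-conditions}, each $R_i\otimes_R M$ is dualizable, and its unit $\eta_i\:R_i\to (R_i\otimes_R M)^\vee\otimes_{R_i} (R_i\otimes_R M)$ and counit $\varepsilon_i$ are isomorphisms of $R_i$-modules. First, by Proposition~\ref{proposition: dualizability is an fpqc-local property}, $M$ is dualizable over $R$. We therefore have unit and counit maps
\[
\eta\:R\longmap M^\vee\otimes_R M,\qquad \varepsilon\: M\otimes_R M^\vee\longmap R
\]
for $M$, as in \S\ref{subsection: triangle axioms}.

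Second, I would observe that these maps are compatible with base change: since both $\eta$ and $\varepsilon$ are built purely from the monoidal structure and the duality data, applying $R_i\otimes_R\vbl$ sends them to the corresponding unit $\eta_i$ and counit $\varepsilon_i$ for $R_i\otimes_R M$. (Concretely, dualizability passes through base change with $(R_i\otimes_R M)^\vee\cong R_i\otimes_R M^\vee$, via Proposition~\ref{pro:tensor-with-dual}.) By hypothesis, each $\eta_i$ and each $\varepsilon_i$ is an isomorphism, so the base changes of $\eta$ and $\varepsilon$ along the faithfully flat map $R\to\prod_i R_i$ are isomorphisms. Since this map is faithful, $\eta$ and $\varepsilon$ themselves are isomorphisms.

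Applying Proposition~\ref{pro:invertible-module-equivalent-conditions} once more, $M$ is invertible. The only step requiring any care is verifying that the unit and counit genuinely commute with base change, and this is routine: it follows from the uniqueness of duals in a symmetric monoidal category, applied to the symmetric monoidal base-change functor $R_i\otimes_R\vbl$. I do not expect any real obstacle here.
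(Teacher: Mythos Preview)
Your proposal is correct and follows essentially the same approach as the paper: both use Proposition~\ref{pro:invertible-module-equivalent-conditions} to recast invertibility as dualizability together with the unit and counit being isomorphisms, and then argue that each of these conditions is fpqc-local. The paper's proof is simply a compressed version of yours, asserting in one line that ``these are fpqc-local properties of morphisms'' where you spell out separately the appeal to Proposition~\ref{proposition: dualizability is an fpqc-local property} for dualizability and to faithfulness for the isomorphism conditions.
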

\begin{proof}
By the previous proposition, an $R$-module $M$ is invertible if and only if the maps 
$\varepsilon:M^\vee\otimes_RM \to R$ and $\eta:R \to M\otimes_RM^\vee$ are isomorphisms satisfying the triangle 
identities \eqref{eq: triangle1} and \eqref{eq: triangle2}. But these are fpqc-local properties of morphisms.
So invertibility is also an fpqc-local property.
\end{proof}

In other words, invertible modules form a stack in the fpqc topology.
Also, it follows that every locally free module of rank $1$ is invertible.

\begin{definition}\label{definition: projection}
Let $P_n(R)$ denote $\{\pi \in R^{n\times n} \mid \pi^2=\pi\}$, the set of $n\times n$ projector matrices. 
For each projector $\pi \in P_n(R)$, let $M_\pi\subseteq R^n$ 
denote the image of $\pi\:R^n\to R^n$, let $p_\pi\:R^n\to M_\pi$ denote the same map $\pi$ but with the codomain 
restricted to $M_\pi$, and let $s_\pi\: M_\pi\to R^n$ denote the inclusion.
\end{definition}

We therefore have the epi-mono factorization of $\pi$:
	$$
	\begin{tikzcd}
	R^n 	\arrow[r,->>,swap, "p_\pi"] & M_\pi \arrow[hook, bend right,  l,swap, "s_\pi"]
	\end{tikzcd}
	$$
For any projective module $M$ generated by $n$ elements, there is a projector $\pi\in P_n(R)$ such that
$M\cong M_\pi$.

\begin{proposition}\label{proposition: commutative diagram prop}
For any $\pi$, the following diagrams commute:
\begin{equation}
	\begin{tikzcd}[row sep=1cm]
R^n \otimes (R^n)^\vee \arrow[r,"\varepsilon"] & R & R^n \otimes (R^n)^\vee \arrow[d,swap, "p_\pi\otimes s_\pi^\vee"]  & R \arrow[l,swap,"\eta"] \arrow[dl,"\eta_\pi"]\\
M_\pi\otimes M_\pi^\vee \arrow[u,"s_\pi\otimes p_\pi^\vee"] \arrow[ur,swap,"\varepsilon_\pi"] & & M_\pi\otimes M_\pi^\vee.
	\end{tikzcd}
\end{equation}
\end{proposition}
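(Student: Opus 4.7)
The plan is to verify both diagrams by direct computation using explicit formulas for the units and counits. First I would recall that for the free module $R^n$ with standard basis $\{e_i\}$ and dual basis $\{e_i^*\}$, the counit satisfies $\varepsilon(x \otimes f) = f(x)$ and the unit satisfies $\eta(1) = \sum_{i=1}^n e_i \otimes e_i^*$. Both descriptions follow from~(\ref{pro:tensor-with-dual}), which for dualizable $M$ identifies $M \otimes M^\vee$ with $\Hom_R(M,M)$ via $m \otimes \varphi \mapsto (m' \mapsto \varphi(m')\,m)$; under this identification $\eta(1)$ corresponds to $\id_M$ and $\varepsilon$ is evaluation. Since $M_\pi$ is dualizable by~(\ref{proposition: dualizable iff f.g. proj.}), the same recipe gives $\varepsilon_\pi(m \otimes \varphi) = \varphi(m)$ and characterizes $\eta_\pi(1)$ as the unique preimage of $\id_{M_\pi}$ under $M_\pi \otimes M_\pi^\vee \isomap \End(M_\pi)$.

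For the first (counit) diagram, a one-line element chase suffices. For $m \in M_\pi$ and $\varphi \in M_\pi^\vee$,
\[
\varepsilon\bigl((s_\pi \otimes p_\pi^\vee)(m \otimes \varphi)\bigr) = \varepsilon\bigl(s_\pi(m) \otimes (\varphi \circ p_\pi)\bigr) = \varphi\bigl(p_\pi(s_\pi(m))\bigr) = \varphi(m),
\]
using $p_\pi \circ s_\pi = \id_{M_\pi}$ at the final step, and this equals $\varepsilon_\pi(m \otimes \varphi)$.

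For the second (unit) diagram, compute
\[
(p_\pi \otimes s_\pi^\vee)(\eta(1)) = \sum_{i=1}^n p_\pi(e_i) \otimes (e_i^* \circ s_\pi) \in M_\pi \otimes M_\pi^\vee.
\]
By the characterization of $\eta_\pi(1)$ above, it is enough to check that this element maps to $\id_{M_\pi}$ under the isomorphism with $\End(M_\pi)$. The associated endomorphism sends $m \in M_\pi$ to
\[
\sum_{i=1}^n (e_i^* \circ s_\pi)(m)\, p_\pi(e_i) = p_\pi\Bigl(\sum_{i=1}^n e_i^*(s_\pi(m))\, e_i\Bigr) = p_\pi(s_\pi(m)) = m,
\]
again using $p_\pi \circ s_\pi = \id_{M_\pi}$, so the endomorphism is $\id_{M_\pi}$ as desired.

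The only real obstacle is keeping the equivalent descriptions of $\eta_\pi(1)$ straight (as the image of $1$ under the tensor-hom unit, as the preimage of $\id_{M_\pi}$, or as an explicit sum $\sum_j m_j \otimes f_j$ satisfying $\sum_j f_j(m)\,m_j = m$ as in the proof of~(\ref{proposition: dualizable iff f.g. proj.})). Once these are aligned, both diagrams collapse to the single identity $p_\pi \circ s_\pi = \id_{M_\pi}$.
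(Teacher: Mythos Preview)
Your proof is correct. For the counit diagram your argument is word-for-word the same as the paper's. For the unit diagram you diverge: the paper simply observes that the two diagrams are dual to one another and so declines to check the second one separately, whereas you carry out a direct computation via the identification $M_\pi\otimes M_\pi^\vee\cong\End(M_\pi)$. Your route is slightly longer but has the advantage of being fully explicit; the paper's duality remark is terse and left to the reader to unpack. Both approaches ultimately reduce to the identity $p_\pi\circ s_\pi=\id_{M_\pi}$.
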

\begin{proof}
The diagrams are dual to each other. So it is enough to consider the first.	
For any $m\otimes \chi \in M_\pi \otimes M_\pi^\vee$, we have
\begin{multline*}
\varepsilon ( (s_\pi\otimes p_\pi^\vee)(m\otimes \chi))= \varepsilon ( s_\pi(m)\otimes (\chi\circ p_\pi))=(\chi\circ p_\pi)(s_\pi(m)) \\
=\chi(p_\pi(s_\pi(m)))=\chi(m)=\varepsilon_\pi(m\otimes \chi). 	
\end{multline*}
\end{proof}

\begin{proposition}\label{pro:unit-counit-criterion}
	Consider a projector $\pi\in P_n(R)$.
	\begin{enumerate}
		\item The counit $\varepsilon_\pi$ is surjective if and only if the matrix entries $\pi_{ij}\in R$ generate the unit ideal.	
		\item The unit $\eta_\pi$ is surjective if and only if there exists a matrix $z$ such that 
		$z_{ij}\pi_{kl} = \pi_{kj}\pi_{il}$ for all $i,j,k,l$.
		\item If both $\varepsilon_\pi$ and $\eta_\pi$ are surjective, we have
			$\pi_{ij}\pi_{kl}=\pi_{il}\pi_{kj}$ for all $i,j,k,l$.
		(``All $2\times 2$ submatrices of $\pi$ are singular.'')
	\end{enumerate}
\end{proposition}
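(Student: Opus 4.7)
The plan is to identify the images of $\varepsilon_\pi$ and $\eta_\pi$ explicitly in terms of the matrix entries $\pi_{ij}$, and then translate surjectivity into the stated matrix conditions. The central technical tool is that the map $s_\pi\otimes p_\pi^\vee\: M_\pi\otimes M_\pi^\vee\to R^n\otimes (R^n)^\vee$ is split injective, with section $p_\pi\otimes s_\pi^\vee$, since $p_\pi\circ s_\pi=\id_{M_\pi}$ (and dually $s_\pi^\vee\circ p_\pi^\vee=\id_{M_\pi^\vee}$, because any $\chi\in M_\pi^\vee$ equals $\chi\circ p_\pi\circ s_\pi$). This lets us test tensor identities in $M_\pi\otimes M_\pi^\vee$ by computing in the ambient matrix algebra $R^n\otimes (R^n)^\vee\cong M_n(R)$, under the standard identification $e_i\otimes e_j^*\leftrightarrow E_{ij}$. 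For (1), the image of $\varepsilon_\pi$ is an ideal of $R$, and using $\varepsilon_\pi(m\otimes\chi)=\chi(m)$ from the proof of~(\ref{proposition: commutative diagram prop}), the pair $m=p_\pi(e_j)$, $\chi=e_i^*\circ s_\pi$ gives $\chi(m)=e_i^*(\pi(e_j))=\pi_{ij}$, so every matrix entry lies in the image; conversely, any $m\in M_\pi$ lifts as $s_\pi(m)=\pi(v)$ and any $\chi\in M_\pi^\vee$ extends to $\chi\circ p_\pi=\sum_i a_i e_i^*$, and expanding $\chi(m)$ produces an $R$-linear combination of the $\pi_{ij}$. Hence $\mathrm{im}(\varepsilon_\pi)=(\pi_{ij})$, and surjectivity is equivalent to this being the unit ideal.

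For (2), the image of $\eta_\pi$ is the cyclic submodule $R\cdot\eta_\pi(1)$. The elementary tensors $p_\pi(e_k)\otimes(e_l^*\circ s_\pi)$ generate $M_\pi\otimes M_\pi^\vee$ as an $R$-module, because both $p_\pi$ and $s_\pi^\vee$ are surjective. Consequently $\eta_\pi$ is surjective if and only if each such generator equals $z_{kl}\cdot\eta_\pi(1)$ for some $z_{kl}\in R$. Transporting this equality through the injection $s_\pi\otimes p_\pi^\vee$ and using the diagram of~(\ref{proposition: commutative diagram prop}), I would compute that $\eta_\pi(1)$ maps to the matrix $\pi$ itself (invoking $\pi^2=\pi$) while $p_\pi(e_k)\otimes(e_l^*\circ s_\pi)$ maps to $\pi E_{kl}\pi$, whose $(i,j)$-entry is $\pi_{ik}\pi_{lj}$. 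The condition thus becomes the matrix identity $\pi E_{kl}\pi=z_{kl}\pi$, which entry by entry is the stated equation (up to the convention on index placement).

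For (3), I would combine the two hypotheses as follows. Swapping $(i,j)\leftrightarrow (k,l)$ in the relation of (2) and using commutativity of $R$ yields the symmetry $z_{ij}\pi_{kl}=z_{kl}\pi_{ij}$. Multiplying by the coefficients $r_{kl}$ from the representation $1=\sum_{k,l}r_{kl}\pi_{kl}$ provided by (1), and summing over $k,l$, then gives $z_{ij}=c\pi_{ij}$ for a single constant $c=\sum_{k,l}r_{kl}z_{kl}\in R$. Substituting back into (2) produces $c\pi_{ij}\pi_{kl}=\pi_{kj}\pi_{il}$; swapping $j\leftrightarrow l$ and re-substituting then yields $c^2\pi_{ij}\pi_{kl}=\pi_{ij}\pi_{kl}$, and a last application of (1) (multiply by $r_{ij}r_{kl}$ and sum) forces $c^2=1$. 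One then extracts the clean equality $\pi_{ij}\pi_{kl}=\pi_{il}\pi_{kj}$ by playing these relations against each other.

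The hard part will be this last step of (3): over a ring the equation $c^2=1$ would leave only $c=\pm 1$ and a rank argument would finish things, but over a general semiring we lack both subtraction and a notion of sign, so the elimination of the residual constant $c$ must be carried out purely multiplicatively, by exploiting the interplay between the relation $c\pi_{ij}\pi_{kl}=\pi_{kj}\pi_{il}$, its symmetric variants, and the unit-ideal expression from (1).
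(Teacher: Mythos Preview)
Your treatment of (1) and (2) is correct and is essentially the paper's argument: both pass through the ambient $R^n\otimes(R^n)^\vee\cong M_n(R)$ via the diagram of~(\ref{proposition: commutative diagram prop}) and the identity $(\pi\otimes\pi^\vee)(b)=\pi b\pi$, arriving at the same matrix conditions.

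For (3) you diverge from the paper and leave a genuine gap. The paper never introduces your constant $c$; instead it notes that the identity $\pi_{ij}\pi_{kl}=\pi_{il}\pi_{kj}$ can be checked Zariski-locally, uses (1) to pass to $R[1/\pi_{i'j'}]$ where some entry becomes a unit, and then reads off from (2) that $\pi_{kl}=\pi_{i'j'}^{-1}\pi_{i'l}\pi_{kj'}$, from which the $2\times 2$ identity follows by direct substitution. Your global route is correct up through $c^2=1$, but at that point the desired conclusion is \emph{equivalent} to $c=1$, and you do not establish this---you flag it as ``the hard part'' and stop. The gap is real: in a general semiring $c^2=1$ does not force $c=1$, and the hoped-for ``playing these relations against each other'' tends to be circular (each attempt to eliminate $c$ reintroduces it). However, your method can be completed with one observation you missed: setting $k=i$, $l=j$ in $c\pi_{ij}\pi_{kl}=\pi_{kj}\pi_{il}$ gives $c\,\pi_{ij}^2=\pi_{ij}^2$ for all $i,j$. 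Since the $\pi_{ij}$ generate the unit ideal, so do the $\pi_{ij}^2$ (expand $1=(\sum_{i,j} r_{ij}\pi_{ij})^{n^2+1}$ and note by pigeonhole that every monomial contains some $\pi_{ij}^2$), and writing $1=\sum_{i,j} s_{ij}\pi_{ij}^2$ then yields $c=\sum_{i,j} s_{ij}\,c\pi_{ij}^2=\sum_{i,j} s_{ij}\pi_{ij}^2=1$. With this in hand your global argument goes through and gives a clean alternative to the paper's localization.
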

\begin{proof}
First consider the following diagram, which commutes by~(\ref{proposition: commutative diagram prop}):
\begin{equation}
	\label{diag:projector}
\begin{tikzcd}[column sep=0.8cm]
R \ar[r,hook,"\eta"] \ar[dr,swap,"\eta_\pi"]
	& R^n \otimes (R^n)^\vee \ar[d,->>,"p\otimes s^\vee"] \ar[dr,"\pi\otimes\pi^\vee"]\\
& M\otimes M^\vee \ar[r,hook,"s\otimes p^\vee"] \ar[dr,swap,"\varepsilon_\pi"]
	& R^n\otimes(R^n)^\vee \ar[d,->>,"\varepsilon"] \\
& & R
\end{tikzcd}
\end{equation}
Observe that we have
\begin{equation}
	\label{eq:pi-map}
\pi\otimes \pi^\vee\: e_i\otimes e_j^\vee \mapsto 
	\sum_{k,l} \pi_{ki} e_k \otimes \pi_{jl}e_l^\vee \\= \sum_{k,l} \pi_{ki}\pi_{jl} (e_k\otimes e_l^\vee).	
\end{equation}
It follows that for an arbitrary element $b=\sum_{i,j}b_{ij} e_i\otimes e_j^\vee$, we have
\begin{equation}
	\label{eq:pi-map2}
(\pi\otimes\pi^\vee)(b) = \pi b \pi,
\end{equation}
where the right-hand side is the matrix product and both sides are interpreted as matrices.

(1): Since $p\otimes s^\vee$ is surjective,
$\varepsilon_\pi$ is surjective if and only if $\varepsilon\circ(\pi\otimes\pi^\vee)$ is surjective. And this holds
if and only if the elements $\varepsilon\circ(\pi\otimes\pi^\vee)(e_i\otimes e_j^\vee)$ generate the unit ideal.
But these elements are just the matrix entries, by (\ref{eq:pi-map}):
$$
\varepsilon\circ(\pi\otimes\pi^\vee)(e_i\otimes e_j^\vee) = \sum_k \pi_{ki}\pi_{jk} = (\pi^2)_{ji} = \pi_{ji}
$$

(2):
The map $\eta_\pi$ is surjective if and only if there exists a map $z\:R^n\otimes (R^n)^\vee\to R$ such that $\eta_\pi\circ z = p\otimes s^\vee$. Since $s\otimes p^\vee$ is injective, this condition is equivalent to 
$$
(\pi\otimes \pi^\vee)(\eta(z(a))) = (\pi\otimes\pi^\vee)(a)\quad\text{for all matrices }a.
$$
By (\ref{eq:pi-map2}), and since $\pi^2=\pi$, this is equivalent to
$$
z(a) \pi = \pi a \pi \quad\text{for all matrices }a.
$$
Because this equation is linear in $a$, it is further equivalent to require it holds only for matrices of the form $a=e_i\otimes e_j^\vee$. This is in turn equivalent to 
$$
z_{ij} \pi = \pi_{\ast i}\pi_{j \ast}\quad\text{for all }i,j,
$$
where we write the element $z\in (R^n\otimes (R^n)^\vee)^\vee = (R^n)^\vee\otimes R^n$ as 
$z=\sum_{i,j}z_{ij} e_i^\vee\otimes e_j$ and where $\pi_{\ast i}$ denotes the $i$-th column of $\pi$ and $\pi_{j \ast}$ denotes the $j$-th row. Thus $\eta_\pi$ is surjective if and only if
there exists a matrix $z$ such that for all $i,j,k,l$, we have
$$
z_{ij}\pi_{kl} = \pi_{ki}\pi_{jl}.
$$

(3): By (1), the matrix entries $\pi_{ij}$ generate the unit ideal. Since the condition to be proved can be checked locally, we may pass to a cover and assume one of the matrix entries is a unit, say $\pi_{i'j'}$. Then by (2), there is a matrix $z$ such that for all $k,l$, we have
$$
z_{kl}\pi_{i'j'}=\pi_{i'l}\pi_{kj'}.
$$
Taking $k=i'$ and $l=j'$, we see $z_{i'j'}=\pi_{i'j'}$, since $\pi_{i'j'}$ is a unit.
Therefore, for all $k,l$, we have $\pi_{i'j'}\pi_{kl}=\pi_{i'l}\pi_{kj'}$ and hence  $\pi_{kl}=\pi_{i'j'}^{-1}\pi_{i'l}\pi_{kj'}$. 
It follows that for all $i,j,k,l$, we have
	$$
	\pi_{ij}\pi_{kl} = 
	(\pi_{i'j'}^{-1}\pi_{i'j}\pi_{ij'})
	(\pi_{i'j'}^{-1}\pi_{i'l}\pi_{kj'})
	$$
and
	$$
	\pi_{il}\pi_{kj} =
	(\pi_{i'j'}^{-1}\pi_{i'l}\pi_{ij'})
	(\pi_{i'j'}^{-1}\pi_{i'j}\pi_{kj'}),
	$$
the right-hand sides of which agree. Therefore $\pi_{ij}\pi_{kl} = \pi_{il}\pi_{kj}$.
\end{proof}

\begin{theorem}\label{thm:inv=loc-free-rank-1}
Every invertible module $M$ over a semiring $R$ is Zariski-locally free of
rank $1$. 
More precisely, if $\pi\in P_n(R)$ is a projector such that $M\cong M_\pi$, 
then $\prod_{i,j}R[\frac{1}{\pi_{ij}}]$ is a faithfully flat $R$-algebra over which $M$ becomes free of rank $1$.
\end{theorem}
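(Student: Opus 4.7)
The plan is to reduce to the case where $M$ is given as the image of a projector matrix and then to exploit the surjectivity of both the unit and counit. By Proposition \ref{pro:invertible-module-equivalent-conditions}, $M$ is dualizable with both structure maps $\eta\:R\to M\otimes_R M^\vee$ and $\varepsilon\:M^\vee\otimes_R M\to R$ isomorphisms; by Proposition \ref{proposition: dualizable iff f.g. proj.}, $M$ is finitely generated and projective, so there exist $n\geq 0$ and $\pi\in P_n(R)$ with $M\cong M_\pi$. The commuting triangles of Proposition \ref{proposition: commutative diagram prop} then transport the surjectivity of $\eta$ and $\varepsilon$ to $\eta_\pi$ and $\varepsilon_\pi$. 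I would now invoke Proposition \ref{pro:unit-counit-criterion}: part (1) gives that the entries $\pi_{ij}$ generate the unit ideal, so by Proposition \ref{pro: fpqc is zariski} the map $R\to \prod_{i,j}R[1/\pi_{ij}]$ is faithfully flat; part (3) gives the ``$2\times 2$ minor'' relations
\[
\pi_{ij}\pi_{kl}=\pi_{il}\pi_{kj}\qquad\text{for all }i,j,k,l.
\]

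The remaining task is, for each fixed pair $(a,b)$, to show that $M_\pi$ becomes free of rank $1$ upon base change to $R[1/\pi_{ab}]$. After inverting $\pi_{ab}$, specializing the minor identity to $(k,l)=(a,b)$ yields the rank-one factorization $\pi_{ij}=\pi_{ab}^{-1}\pi_{ib}\pi_{aj}$. Let $u=\pi e_b=(\pi_{ib})_i$, which manifestly lies in $M_\pi$. The factorization above shows that any element $\pi x \in M_\pi$ equals $\bigl(\sum_j\pi_{ab}^{-1}\pi_{aj}x_j\bigr)\cdot u$, so $M_\pi$ is cyclic with generator $u$. Moreover, the $a$-th coordinate of $u$ is $u_a=\pi_{ab}$, which is a unit in $R[1/\pi_{ab}]$; hence any scalar $\alpha$ with $\alpha u = 0$ satisfies $\alpha\pi_{ab}=0$ and so $\alpha=0$. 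Thus $u$ is a free generator, completing the proof.

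The step I expect to require the most care is the passage from the algebraic identity of part (3) to concrete local freeness. What makes it go through cleanly over an arbitrary semiring is that the argument is purely multiplicative: no additive cancellation is needed, and the chosen entry $\pi_{ab}$ appears literally as a coordinate of the generator $u$, so freeness (not merely cyclicity) is automatic once $\pi_{ab}$ is inverted. It is worth noting that for this same reason the covering produced is of a very explicit ``Zariski'' shape, which is why the conclusion is stronger than fpqc-local triviality.
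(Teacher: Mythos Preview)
Your proof is correct and follows essentially the same route as the paper: both invoke Proposition~\ref{pro:unit-counit-criterion}(1) for the covering and part~(3) for the rank-one factorization $\pi_{ij}=\pi_{ab}^{-1}\pi_{ib}\pi_{aj}$ over $R[1/\pi_{ab}]$. The only cosmetic difference is that the paper packages the local-freeness step as an explicit column--row matrix factorization of $\pi$ (yielding an epi-mono factorization $R^n\twoheadrightarrow R\hookrightarrow R^n$), whereas you phrase it as exhibiting $u=\pi e_b$ directly as a free cyclic generator; these are the same argument.
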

\begin{proof}
First observe that faithful-flatness follows from~(\ref{pro:unit-counit-criterion})(1) and~(\ref{pro: fpqc is zariski}).
	
It remains to show that $M$ becomes free of rank $1$ over each $R[1/\pi_{ij}]$.
Since $M$ is invertible, its unit and counit are isomorphisms, by (\ref{pro:invertible-module-equivalent-conditions}).
Therefore by (\ref{pro:unit-counit-criterion})(3), we have $\pi_{kj}\pi_{il}=\pi_{ij}\pi_{kl}$ for all $k,l$, and hence 
\[
\begin{tikzcd}
\begin{bmatrix}
	\pi_{1j}\\
	\vdots\\
	\pi_{nj}
\end{bmatrix}  \begin{bmatrix}
\pi_{i1} \cdots \pi_{in}
\end{bmatrix} = \begin{bmatrix}
&\vdots & \\
\cdots &\pi_{kj}\pi_{il} & \cdots \\
& \vdots &
\end{bmatrix} = \pi_{ij}\begin{bmatrix}
&\vdots & \\
\cdots &\pi_{kl} & \cdots \\
& \vdots &
\end{bmatrix} =\pi_{ij}\pi
\end{tikzcd}
\]
It follows that over $R[1/\pi_{ij}]$ we have
\[
\begin{tikzcd}
\begin{bmatrix}
	\pi_{1j}\\
	\vdots\\
	\pi_{nj}
\end{bmatrix}  \begin{bmatrix}
	\frac{\pi_{i1}}{\pi_{ij}} \cdots 1 \cdots\frac{\pi_{in}}{\pi_{ij}}
\end{bmatrix} 	 = \pi. 
\end{tikzcd}
\]
Observe that each of the vectors on the left-hand side contains an invertible entry: 
$\pi_{ij}$ in the first and $1$ in the second.
Therefore in the corresponding factorization $\pi\:R^n\to R\to R^n$, the first map is surjective
and the second is injective. It follows $M_\pi$, the image of $\pi$, is isomorphic to $R$.
\end{proof}

\begin{corollary}\label{cor: invertible-iff-locally-rank-1}
Let $R$ be a semiring, and let $M$ be an $R$-module. Then the following are equivalent:
\begin{enumerate}
	\item $M$ is invertible,
	\item $M$ is fpqc-locally free of rank $1$, 
	\item $M$ is locally free of rank $1$ in the Zariski topology of 
		To\"en--Vaqui\'e~(\ref{rmk:Zariski topology on Aff}),
	\item $M$ is Zariski-locally free of rank $1$.
\end{enumerate}
\end{corollary}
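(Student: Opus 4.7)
The plan is to prove the cyclic chain of implications $(1)\Rightarrow(4)\Rightarrow(3)\Rightarrow(2)\Rightarrow(1)$, three of which are essentially immediate from results already in the paper.

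First, the implication $(1)\Rightarrow(4)$ is precisely the content of Theorem~\ref{thm:inv=loc-free-rank-1}: if $M$ is invertible, choose any projector $\pi\in P_n(R)$ with $M\cong M_\pi$ (which exists because invertible modules are dualizable, hence finitely generated projective by~(\ref{proposition: dualizable iff f.g. proj.})), and the theorem gives an explicit Zariski cover by the localizations $R[1/\pi_{ij}]$ over which $M$ becomes free of rank~$1$.

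Next, both $(4)\Rightarrow(3)$ and $(3)\Rightarrow(2)$ follow from the relationships between the topologies noted in~(\ref{rmk:Zariski topology on Aff}): a cover by principal localizations $R\to R[1/a_i]$ is a special case of a cover by flat epimorphisms of finite presentation (since each $R[1/a_i]$ is such), and any Toën--Vaqui\'e Zariski cover is an fpqc cover. So locally free of rank~$1$ in a finer topology implies locally free of rank~$1$ in any coarser topology under consideration.

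The main content is then $(2)\Rightarrow(1)$. By Corollary~\ref{cor: invertibility is an fpqc-local property}, invertibility is an fpqc-local property, i.e., invertible modules form a substack of $\mathsf{Mod}_R$ in the fpqc topology. Since the free module of rank~$1$ over any semiring is tautologically invertible (its tensor functor is the identity), if $(R_i)_{i\in I}$ is a flat cover with $\prod_i R_i$ faithfully flat over $R$ and each $R_i\otimes_R M$ free of rank~$1$ as an $R_i$-module, then each $R_i\otimes_R M$ is invertible, and descent through the fpqc cover yields that $M$ itself is invertible. I expect no real obstacle: the only subtle point is to confirm that ``free of rank~$1$'' in the definition~(\ref{def:fpqc-locally-free}) means rank exactly one, so that the local pieces are genuinely invertible and the fpqc-local character of invertibility established in~(\ref{cor: invertibility is an fpqc-local property}) applies directly.

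\begin{proof}
$(1)\Rightarrow(4)$: This is~(\ref{thm:inv=loc-free-rank-1}).

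$(4)\Rightarrow(3)\Rightarrow(2)$: These follow from the comparison of topologies in~(\ref{rmk:Zariski topology on Aff}).

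$(2)\Rightarrow(1)$: A free $R$-module of rank $1$ is invertible, and invertibility is an fpqc-local property by~(\ref{cor: invertibility is an fpqc-local property}).
\end{proof}
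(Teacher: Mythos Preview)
Your proof is correct and follows exactly the approach the paper intends; the paper does not write out a separate proof for this corollary, treating the cycle $(1)\Rightarrow(4)\Rightarrow(3)\Rightarrow(2)\Rightarrow(1)$ as immediate from~(\ref{thm:inv=loc-free-rank-1}), the topology comparisons, and~(\ref{cor: invertibility is an fpqc-local property}). One small remark: the implication $(3)\Rightarrow(2)$ uses that To\"en--Vaqui\'e Zariski covers are fpqc covers, which is true (they are finite families of flat maps whose product is faithfully flat) but is not literally stated in~(\ref{rmk:Zariski topology on Aff}); you might add a word noting this, since that remark only compares the To\"en--Vaqui\'e topology with the paper's Zariski topology.
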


\begin{remark}
Let $M$ be an invertible $R$-module.
It follows from~(\ref{cor: invertible-iff-locally-rank-1}) that $\textrm{Sym}^n(M)=M^{\otimes n}$.
Therefore the bi-infinite
tensor algebra $T^{\pm}(M)=\bigoplus_{n \in \mathbb{Z}}M^{\otimes n}$ is commutative. Then $M$ has a canonical trivialization over $T^\pm(M)$: 
\begin{equation}
T^\pm(M)\otimes_RM=\bigoplus_{n \in \mathbb{Z}}M^{\otimes (n+1)} \xrightarrow{\simeq}	\bigoplus_{n \in \mathbb{Z}} M^{\otimes m} = T^\pm(M). 
\end{equation}
This is in fact the universal pair consisting of an $R$-algebra $R'$ and a trivialization 
$R'\otimes_R M\isomap M$.

By the above, the affine scheme $\Spec(T^\pm(M))$ is a $\Gm$-torsor over $\Spec(R)$ in the Zariski topology,
where $\Gm$ denotes the usual affine group scheme
$$\Gm = \Spec(R[x,y]/(xy\sim 1)),$$
which is to say the group-valued functor $C\mapsto C^{\times}$.
\end{remark}

\begin{remark}
It follows that for every invertible module $M$, the unit and counit isomorphisms are inverses of one another.
This does not hold in all symmetric monoidal categories. 
(A counter-example is the category of super vector spaces, where
the odd one-dimensional vector space is invertible and self-dual but $\varepsilon\circ\eta=-1$.
We thank Kevin Coulembier and Kim Morrison for supplying this example.)
We do not know if this can be proved in our setting without proving some form of the results above.

A further consequence one can show is that $M_\pi$ is invertible if and and only if one has $\tr(\pi)=1$ and
all $2\times 2$ submatrices of $\pi$ are singular.
This criterion is evident over rings, 
and we find it satisfying that it remains true over semirings. 
It follows further that if $M_\pi$ is invertible, 
then the algebras $R[1/\pi_{ii}]$ cover $R$. So the trivializing cover $(R[1/\pi_{ij}])_{i,j}$ 
of~(\ref{thm:inv=loc-free-rank-1}) has a much smaller subcover.
\end{remark}

\subsection{Proof of (\ref{thm-intro: line bundles}) and table~\ref{table: implications}}
The right-pointing arrows in the table hold by the following, in left-to-right order:
\begin{enumerate}
	\item the Zariski topology is coarser than the fpqc topology,
	\item proposition \ref{pro: locally frees are dualizable},
	\item proposition \ref{proposition: dualizable iff f.g. proj.},
	\item proposition \ref{proposition: proj is flat}.
\end{enumerate}
The left-pointing arrows, in right-to-left order:
\begin{enumerate}
	\item proposition \ref{proposition: f.pres flats are projective},
	\item proposition \ref{proposition: dualizable iff f.g. proj.},
	\item standard commutative algebra,
	\item open question.
\end{enumerate}
The theorem~(\ref{thm-intro: vector bundles}) follows immediately.

\begin{definition}\label{def:Picard-group}
The \emph{Picard group}  $\textrm{Pic}(R)$ of $R$ is the set of isomorphism classes of invertible $R$-modules,
with group operation given by the tensor product $\otimes_R$.
\end{definition}

We note that there are no subtleties related to set-theoretic size here. 
The class $\mathrm{Pic}(R)$ is in bijection with a set
since invertible modules are finitely presented by~(\ref{proposition: dualizable iff f.g. proj.}).

\begin{remark}
The Picard group of schemes over semirings was first considered in~\cite{jun2017vcech}.
It was defined in terms of Zariski-locally free sheaves of rank 1. 	
(It was however in the different but equivalent language of locally semiringed topological spaces 
mentioned in~(\ref{rmk:Jaiung's thesis}).)

It follows from~(\ref{cor: invertible-iff-locally-rank-1}) that our definition above agrees with the
one in~\cite{jun2017vcech}.
\end{remark}

\begin{example}
$\textrm{Pic}(\mathbb{N})=\{0\}$. Let $L$ be an invertible $\mathbb{N}$-module. 
Since $L$ is invertible, it is flat, and hence the map $L\to \bZ\otimes_\bN L$ is injective, by 
(\ref{pro:flat-tensor-preserve-injectivity}).
And since $\textrm{Pic}(\mathbb{Z})=\{0\}$, we have $\bZ\otimes_\bN L \cong \bZ$. Composing these two maps,
we have an injection of $\mathbb{N}$-modules $e: L \to \mathbb{Z}$. 

Since $L$ is flat over $\mathbb{N}$, it is negative free, by (\ref{pro:neg-free-and-canc-flat-local}). It follows that $\textrm{im}(e) \subseteq \mathbb{N}$ or $\textrm{im}(e) \subseteq -\mathbb{N}$. Replacing $e$ by $-e$, we may assume that $\textrm{im}(e) \subseteq \mathbb{N}$. 

Further, we may also assume that $\mathrm{im}(e)$ generates the unit ideal in $\bZ$. 
Indeed, since $L \neq \{0\}$, the ideal $I$ generated by $\textrm{im}(e)$ is nonzero. Let $d$ be the unique generator of $I$ with $d>0$. Then the image of the map $\frac{1}{d}e:L \to \mathbb{Z}$ generates the unit ideal. 

Viewing $e$ as the inclusion map, we have a submodule $L\subseteq \bN$ which generates the unit ideal in $\bZ$.

Now we claim that any map $\chi:L \to \mathbb{N}$ extends uniquely across the inclusion $L \hookrightarrow \mathbb{N}$. Uniqueness is clear. So we consider existence.

First observe that for all $a,b\in L$, we have
\begin{equation}
	\label{eq:chi-1}
	a\chi(b) = \chi(ab) = b\chi(a).
\end{equation}
It follows that there exists a rational number $c\geq 0$ such that $\chi(a)=ca$ for all $a\in L$.
In fact, we have $c\in\bN$. Indeed, since $L$ generates the unit ideal in $\bZ$, we can choose $a$ and $b$ to be relatively prime. From (\ref{eq:chi-1}), we have $a\mid\chi(a)$ and hence $c\in \bN$. Therefore the map $\chi\:L\to\bN$ extends to a map $\bN\to\bN$, namely the one defined by same formula $a\mapsto ca$.

Thus we have an isomorphism
\[
\begin{tikzcd}
\Hom(\mathbb{N},\mathbb{N})=\mathbb{N}^\vee \arrow[r,"\simeq"] & \Hom(L,\mathbb{N})=L^\vee, 
\end{tikzcd}
\]
and hence the diagram
\[
\begin{tikzcd}
	L^{\vee\vee} \arrow[r,"\simeq"] &\mathbb{N}^{\vee\vee} \\
	L \arrow[r,hook] \arrow[u]& \mathbb{N} \arrow[u,"\simeq"]
\end{tikzcd}
\]
But, since $L$ is invertible, $L \to L^{\vee\vee}$ is an isomorphism. It follows that $L \to \mathbb{N}$ is an isomorphism. We conclude $\textrm{Pic}(\mathbb{N})=\{0\}$. 
\end{example}

\section{Reflexive modules}
\label{section: reflexive modules}

\begin{definition}
Let $R$ be a semiring. An $R$-module $M$ is said to be \emph{reflexive} if the canonical map $\delta_M:M \to M^{\vee\vee}$ is an isomorphism.
\end{definition}

\begin{remark}
We mention some facts which are easily proved but which we will not use.
Any dualizable module is reflexive. 
For finitely presented modules, reflexivity is an fpqc-local property.
The \emph{Dixmier projection} $(\delta_M)^\vee$ is a retraction of $\delta_{M^\vee}$:
$$
(\delta_M)^\vee \circ \delta_{M^\vee} = \id_{M^\vee}.
$$
\mpar{mention the double-dual monad?}
\end{remark}

If it happens that $M^{\vee\vee}$ is reflexive, as will be the case in the next section, 
then $M^{\vee\vee}$ is the reflexive completion of $M$:

\begin{proposition}
	\label{pro:reflexive-completion}
Let $f\:M\to P$ be a morphism of $R$-modules with $P$ reflexive. 
Then there exists a map $\tilde{f}\:M^{\vee\vee} \to P$ such that $f=\tilde{f}\circ \delta_M$. 
If $M^{\vee\vee}$ is reflexive, the map is unique.
\end{proposition}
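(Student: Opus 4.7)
For existence, the plan is to use the naturality of the double-dualization unit $\delta$. Applying the double-dual functor to $f\:M\to P$ gives $f^{\vee\vee}\:M^{\vee\vee}\to P^{\vee\vee}$, and since $P$ is reflexive, $\delta_P$ is invertible. So I would set $\tilde f \dfn \delta_P^{-1}\circ f^{\vee\vee}$. Naturality of $\delta$ applied to $f$ gives $f^{\vee\vee}\circ\delta_M = \delta_P\circ f$, so precomposing $\tilde f$ with $\delta_M$ immediately yields $\tilde f\circ\delta_M = f$.

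For the uniqueness assertion, the main step I would isolate as a lemma is that, when $M^{\vee\vee}$ is reflexive, $(\delta_M)^{\vee\vee}\:M^{\vee\vee}\to M^{\vee\vee\vee\vee}$ is itself an isomorphism (in fact equal to $\delta_{M^{\vee\vee}}$). To prove this, I would observe that the Dixmier projection applied to $M^\vee$ reads $(\delta_{M^\vee})^\vee\circ\delta_{M^{\vee\vee}}=\id_{M^{\vee\vee}}$, while dualizing the Dixmier projection for $M$ gives $(\delta_{M^\vee})^\vee\circ(\delta_M)^{\vee\vee}=\id_{M^{\vee\vee}}$. Thus $(\delta_{M^\vee})^\vee$ is a common retraction of both $\delta_{M^{\vee\vee}}$ and $(\delta_M)^{\vee\vee}$. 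Under the reflexivity hypothesis the section $\delta_{M^{\vee\vee}}$ is an isomorphism, which forces its retraction $(\delta_{M^\vee})^\vee$ to be invertible, and then any section must coincide with its inverse; in particular $(\delta_M)^{\vee\vee}=\delta_{M^{\vee\vee}}$ is an isomorphism.

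Given the lemma, uniqueness follows by a diagram chase. Suppose $g_1,g_2\:M^{\vee\vee}\to P$ both satisfy $g_i\circ\delta_M=f$. Double-dualizing yields $g_i^{\vee\vee}\circ(\delta_M)^{\vee\vee}=f^{\vee\vee}$ for $i=1,2$, and since $(\delta_M)^{\vee\vee}$ is invertible, this forces $g_1^{\vee\vee}=g_2^{\vee\vee}$. Naturality of $\delta$ applied to each $g_i$ gives $g_i^{\vee\vee}\circ\delta_{M^{\vee\vee}}=\delta_P\circ g_i$; since $\delta_{M^{\vee\vee}}$ and $\delta_P$ are both invertible, one recovers $g_i=\delta_P^{-1}\circ g_i^{\vee\vee}\circ\delta_{M^{\vee\vee}}$, and the equality $g_1^{\vee\vee}=g_2^{\vee\vee}$ then forces $g_1=g_2$.

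I expect the only real content to be the lemma that $(\delta_M)^{\vee\vee}$ is an isomorphism under the reflexivity hypothesis on $M^{\vee\vee}$; everything else is formal chasing of the natural transformation $\delta$. As hinted by the margin note in the paper, this lemma is essentially one of the unit axioms of the double-dualization monad on $\Mod(R)$, and packaging it in that language would display uniqueness as the formal statement that reflexive modules form a reflective subcategory on which $\delta_M$ is the reflection map.
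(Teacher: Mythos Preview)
Your proof is correct and follows essentially the same approach as the paper's: both take $\tilde f=\delta_P^{-1}\circ f^{\vee\vee}$ for existence, and both reduce uniqueness to the lemma that $(\delta_M)^{\vee\vee}=\delta_{M^{\vee\vee}}$ when $M^{\vee\vee}$ is reflexive, proved via the common retraction $(\delta_{M^\vee})^\vee$. The only cosmetic difference is that the paper shows directly that any $g$ with $g\circ\delta_M=f$ must equal $\delta_P^{-1}\circ f^{\vee\vee}$, whereas you compare two such maps $g_1,g_2$; the content is identical.
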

\begin{proof}
Consider the following diagram:
\[
\begin{tikzcd}[column sep=1.8cm, row sep=1.3cm]
	M \arrow[r,"f"] \arrow[d,swap,"\delta_M"] 
		& P \arrow{d}{\delta_P}[swap]{\simeq}\\
	M^{\vee\vee} \arrow[r,"f^{\vee\vee}"] \arrow[ru,dashed, "g"] 
		 \arrow[bend right]{d}[swap]{(\delta_M)^{\vee\vee}} \arrow[bend left]{d}{\delta_{M^{\vee\vee}}}
		& P^{\vee\vee} \\
	M^{\vee\vee\vee\vee} \arrow[dashed]{ru}[swap]{g^{\vee\vee}}
\end{tikzcd}
\]
We are considering maps $g$ making the top-most triangle commute.

For existence, take $g=\delta_p^{-1}\circ f^{\vee\vee}$. This makes the second triangle commute: $\delta_P\circ g = f^{\vee\vee}$. Then  we have $f=g\circ \delta_M$ because $\delta_P$ is an isomorphism and the square commutes. 

For uniqueness, let $g$ now be any map such that $g\circ \delta_M=f$.
Therefore we have 
$$g^{\vee\vee}\circ (\delta_M)^{\vee\vee}=f^{\vee\vee}.$$ 
Now, we also have 
$$g^{\vee\vee}\circ\delta_{M^{\vee\vee}}=\delta_P\circ g.$$
Finally, because $M^{\vee\vee}$ is reflexive, we have
$$(\delta_M)^{\vee\vee}=\delta_{M^{\vee\vee}}.$$
Indeed,
$\delta_{M^{\vee\vee}}$ is an isomorphism, and hence its canonical retraction $(\delta_{M^\vee})^\vee$ is an
isomorphism, and hence its other section $(\delta_M)^{\vee\vee}$ agrees with $\delta_{M^{\vee\vee}}$.

Combining these equations, we have
$$
\delta_P\circ g = g^{\vee\vee}\circ \delta_{M^{\vee\vee}} = g^{\vee\vee}\circ (\delta_M)^{\vee\vee} = f^{\vee\vee},
$$
and hence $g=\delta_P^{-1}\circ f^{\vee\vee}$.
\end{proof}

\section{The narrow class group as a reflexive Picard group}
\label{section: narrow class group}

Let
\begin{align*}
	F &= \text{a number field},\\
	\Ded &= \text{a Dedekind domain the fraction field of which is }F,\\
	S &= \text{a set of real places of }F.
\end{align*}
By the term \emph{fractional ideal}, 
we mean an invertible (equivalently nonzero and finitely generated) sub-$\Ded$-module of $F$.
Fix the following notation:
\begin{align*}
	F_+ &=\{\alpha \in F \mid \forall \sigma \in S.\; \sigma(\alpha)\geq 0\}, \text{ which is a semifield}\\
	\Dedplus	&=	\Ded\cap F_+, \text{ which is a semiring} \\
	\operatorname{Id}(F) &= \text{the group of fractional ideals}\\
	\Cl_S(F) &= \operatorname{Id}(F)/\langle \beta\Ded \mid \beta\in F_+^{\times}\rangle
\end{align*}
Perhaps the most important case is where $\Ded$ is the ring of algebraic integers in $F$ 
and $S$ is the set of all real places.
Then ${\Cl_S}(F)$ is the classical narrow class group of $F$
(as defined in \cite{Narkiewicz:book}, p.\ 93, say).
In general, $\Cl_S(F)$ is a variant involving only the primes of $R$ and the real places in $S$.

We will say an $\Dedplus$-module $L$ is \emph{of rank 1} if  $\dim_F(F\otimes_{\Dedplus}L)=1$.
Then define
\begin{align*}
\Pic^{\mathrm{refl}}(\Dedplus) &= \{\text{isomorphism classes of
reflexive $\Dedplus$-modules of rank 1}\}.
\end{align*}
Again, there are no subtleties related to set-theoretic size since it will follow from the results below that
any such module can be embedded in $F$.

\begin{theorem}\label{thm:narrow-class-group} \ 
	\begin{enumerate}
		\item For any fractional ideal $I$ of $F$, 
			the $\Dedplus$-module $I_+:=I\cap F_+$ is reflexive and of rank 1.
		\item The map $$\operatorname{Id}(F)\to \Pic^{\mathrm{refl}}(\Dedplus)$$ defined by
			$I\mapsto I_+$ factors
			through the quotient map $\operatorname{Id}(F)\to{\Cl_S}(F)$.
		\item The resulting map is a bijection $${\Cl_S}(F)\longisomap \Pic^{\mathrm{refl}}(\Dedplus).$$
	\end{enumerate}
\end{theorem}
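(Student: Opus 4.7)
The plan is to realize everything in sight as $\mathcal{O}_{F_+}$-submodules of $F$ and use the ambient ``star operation'' $X^\ast := \{\alpha\in F \mid \alpha X\subseteq \mathcal{O}_{F_+}\}$ to compute duals. The key preliminary observation is that any fractional ideal $I$ satisfies $I = I_+ - I_+$: given $\alpha\in I$, pick any positive rational integer $N\in I\cap\bZ_{>0}$, and $kN$ for large $k$ dominates all $\sigma(\alpha)$, so $\alpha+kN\in I_+$. In particular $\mathcal{O}_F = \mathcal{O}_{F_+} - \mathcal{O}_{F_+}$ and $F = F_+-F_+$.

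For part (1), $F\otimes_{\mathcal{O}_{F_+}} I_+ = F\otimes_{\mathcal{O}_F} I = F$, so $I_+$ has rank $1$. For the dual, an $\mathcal{O}_{F_+}$-linear map $I_+\to\mathcal{O}_{F_+}$ extends uniquely along the group completions to an $\mathcal{O}_F$-linear map $I\to\mathcal{O}_F$, i.e.\ to multiplication by some $\alpha\in I^{-1}$. The condition $\alpha I_+\subseteq\mathcal{O}_{F_+}$ (rather than merely $\mathcal{O}_F$) forces $\sigma(\alpha)\geq 0$ for every $\sigma\in S$, as one sees by testing against a totally positive element of $I_+$. This gives $(I_+)^\vee\cong (I^{-1})_+$; iterating produces $(I_+)^{\vee\vee}\cong I_+$, and one checks the canonical map $\delta_{I_+}$ is this isomorphism. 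Part (2) is then direct: if $J=\beta I$ with $\beta\in F_+^\times$, then $\sigma(\beta)>0$ so $\beta^{-1}\in F_+^\times$, whence $J_+ = \beta I_+$ and multiplication by $\beta$ gives the required isomorphism.

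For the injectivity half of (3), any isomorphism $\varphi\colon I_+\longisomap J_+$ tensors with $F$ to become multiplication by some $\beta\in F^\times$ on $F\cong F\otimes I_+\cong F\otimes J_+\cong F$. Then $\beta I = \beta(I_+-I_+) = J_+-J_+ = J$, and since $\beta$ sends a totally positive generator of $I_+$ into $F_+$, we obtain $\sigma(\beta)\geq 0$ for all $\sigma\in S$, so $\beta\in F_+^\times$.

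The main work, and the step I expect to be the principal obstacle, is the surjectivity of (3). Given a reflexive rank-$1$ $\mathcal{O}_{F_+}$-module $L$, I would first show $L$ is additively cancellative and torsion-free: both follow by pushing forward along elements of $L^\vee$ (which take values in the cancellative domain $\mathcal{O}_{F_+}$) and invoking reflexivity through $\delta_L$. This lets me embed $L$ into $F\otimes_{\mathcal{O}_{F_+}} L\cong F$ after choosing a basis; call the image $J\subseteq F$. The delicate point is to arrange $J\subseteq F_+$. For each $\sigma\in S$, if $\sigma(J)$ contained both strictly positive and strictly negative values, then every $\alpha\in J^\ast = L^\vee$ would need $\sigma(\alpha)=0$ and hence $\alpha=0$, giving $L^\vee=0$, contradicting reflexivity (as $L\neq 0$). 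So $\sigma(J)$ has constant sign; by weak approximation, rescale $J$ by some $\beta\in F^\times$ with the opposite sign pattern so that $\beta J\subseteq F_+$. Set $I := J-J$, an $\mathcal{O}_F$-submodule of $F$; taking any nonzero $\alpha\in J^\ast$ shows $I\subseteq\alpha^{-1}\mathcal{O}_F$, so $I$ is a fractional ideal. The same star calculation as in (1) then yields $J^\ast = (I^{-1})_+$ and $J^{\ast\ast} = I_+$, and the reflexivity of $L\cong J$ closes the argument with $J = I_+$.
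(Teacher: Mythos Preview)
Your proposal is correct and follows essentially the same route as the paper: realize all modules as sub-$\mathcal{O}_{F_+}$-modules of $F$, compute duals via the description $M^\vee = \{\beta\in F \mid \beta M \subseteq \mathcal{O}_{F_+}\}$, prove $(I_+)^\vee = (I^{-1})_+$, and for surjectivity embed a reflexive rank-$1$ module $L$ into $F$ and recover it as $I_+$ through the double dual.

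Two small differences are worth noting. First, for the embedding $L\hookrightarrow F$ the paper bypasses the cancellativity/torsion-freeness verification by observing directly that
\[
L \cong L^{\vee\vee} = \Hom_{\mathcal{O}_{F_+}}(L^\vee,\mathcal{O}_{F_+}) \hookrightarrow \Hom_{\mathcal{O}_{F_+}}(L^\vee,F),
\]
the target already being an $F$-vector space. Second, and more interestingly, to arrange the image inside $F_+$ the paper avoids weak approximation with a one-line trick: having embedded $L$ as $N\subseteq F$, take any nonzero $\beta\in N$ and replace $N$ by $M=\beta N$; then $\beta^2\in M$ is automatically in $F_+$ since every $\sigma(\beta^2)=\sigma(\beta)^2\geq 0$. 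From a single nonzero element of $M\cap F_+$ and any nonzero $\chi\in M^\vee$ one then gets $\chi\in F_+$ (semifield property of $F_+$), and hence $M\subseteq F_+$. Your sign-constancy argument plus weak approximation reaches the same conclusion and is perfectly valid, but the squaring trick is shorter and uses no external input.
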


The proof will be given in (\ref{pf:narrow-class-group}), after we establish some preliminary facts.

\begin{lemma}\label{lemma: positive lemma 1}
Let $M$ be a sub-$\mathbb{N}$-module of an abelian group $M'$.
Then the induced map $i:\mathbb{Z}\otimes_\mathbb{N}M \to M'$ is an injection.  
\end{lemma}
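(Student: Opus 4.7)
The plan is to use the explicit description of the group completion $\bZ\otimes_\bN M$ of a commutative monoid $M$ as formal differences $m_1-m_2$ modulo the relation
\[
m_1 - m_2 \sim n_1 - n_2 \quad\Longleftrightarrow\quad \exists k\in M:\ m_1+n_2+k = n_1+m_2+k.
\]
Every element of $\bZ\otimes_\bN M$ can be written in this form, so to prove injectivity of $i$ it suffices to show that if $i(m_1-m_2)=0$, i.e.\ $m_1=m_2$ as elements of $M'$, then $m_1-m_2=0$ in $\bZ\otimes_\bN M$.

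The key observation I would use is that $M$ is additively cancellative: if $a+k=b+k$ in $M$ for $a,b,k\in M$, then the same equation holds in the ambient abelian group $M'$, and we can subtract $k$ there to get $a=b$ in $M'$, hence $a=b$ in $M$ (since $M\hookrightarrow M'$ is set-theoretic inclusion). Consequently the relation defining $\bZ\otimes_\bN M$ simplifies: $m_1-m_2=n_1-n_2$ iff $m_1+n_2=n_1+m_2$ in $M$, with no auxiliary $k$ needed, and in particular $m_1-m_2=0$ in $\bZ\otimes_\bN M$ iff $m_1=m_2$ in $M$.

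With this in hand the proof is immediate: given $m_1,m_2\in M$ with $m_1=m_2$ in $M'$, the injectivity of the inclusion $M\hookrightarrow M'$ gives $m_1=m_2$ in $M$, hence $m_1-m_2=0$ in $\bZ\otimes_\bN M$.

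There is no real obstacle; the only thing to be careful about is citing or briefly justifying the explicit description of group completion. If one preferred to avoid that, an alternative plan would be purely categorical: factor the inclusion $M\hookrightarrow M'$ through the unit $M\to\bZ\otimes_\bN M$ (using the adjunction between commutative monoids and abelian groups), observe that the composite is injective, and deduce injectivity of $i$ on the image of $M$; then extend to all of $\bZ\otimes_\bN M$ using that every element is a difference of such images together with the fact that $i$ is a group homomorphism.
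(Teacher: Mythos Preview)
Your proof is correct and follows essentially the same approach as the paper: write an arbitrary element of $\bZ\otimes_\bN M$ as a formal difference $m_1-m_2$, observe that $i(m_1-m_2)=0$ forces $m_1=m_2$ in $M'$ and hence in $M$, whence the element is zero. Your discussion of cancellativity is correct but not strictly needed---once $m_1=m_2$ literally in $M$, the formal difference vanishes regardless of whether the stabilizer $k$ in the group-completion relation can be dropped; the paper's proof accordingly omits this step.
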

\begin{proof}
Given any $x\in \ker(i)$, write $x=1\otimes m_1 + (-1)\otimes m_2$, where $m_1,m_2\in M$.
Since $i(x)=0$, we have $m_1-m_2=0$ and hence $m_1=m_2$. Therefore $x=0$.
\end{proof}

\begin{lemma}\label{lemma: positive lemma 2}
\begin{enumerate}
	\item The maps
$\mathbb{Z}\otimes_\mathbb{N}\Dedplus\to \Ded$ and $\mathbb{Q}\otimes_\mathbb{N}\Dedplus\to F$ are isomorphisms.
\item 
For a sub-$\Dedplus$-module $M$ of $F$, 
the map $\Ded\otimes_{\Dedplus} M\to F$ is injective, 
and $F\otimes_{\Dedplus} M \to F$ is bijective if $M$ is nonzero. 
\end{enumerate}
\end{lemma}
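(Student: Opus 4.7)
The plan is to reduce everything to a single ``positivity by shifting'' observation: since $F$ has only finitely many real embeddings, $S$ is finite, and for any $\alpha\in\mathcal{O}_F$ we can choose $N\in\mathbb{N}$ with $N\geq -\sigma(\alpha)$ for every $\sigma\in S$, whence $\alpha+N\in\mathcal{O}_{F_+}$ and $\alpha=(\alpha+N)-N$.

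For part (1), I would first establish surjectivity of $\mathbb{Z}\otimes_{\mathbb{N}}\mathcal{O}_{F_+}\to \mathcal{O}_F$ using the shift above: the element $\alpha$ is the image of $1\otimes(\alpha+N)+(-1)\otimes N$. Injectivity is immediate from~(\ref{lemma: positive lemma 1}), applied to the inclusion of the sub-$\mathbb{N}$-module $\mathcal{O}_{F_+}$ into the abelian group $\mathcal{O}_F$. The second isomorphism then follows by the chain
$$
\mathbb{Q}\otimes_{\mathbb{N}}\mathcal{O}_{F_+}
\cong \mathbb{Q}\otimes_{\mathbb{Z}}(\mathbb{Z}\otimes_{\mathbb{N}}\mathcal{O}_{F_+})
\cong \mathbb{Q}\otimes_{\mathbb{Z}}\mathcal{O}_F = F,
$$
where the middle isomorphism uses what was just proved.

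For part (2), the key observation is the associativity identity
$$
\mathcal{O}_F\otimes_{\mathcal{O}_{F_+}}M
\;\cong\; (\mathbb{Z}\otimes_{\mathbb{N}}\mathcal{O}_{F_+})\otimes_{\mathcal{O}_{F_+}}M
\;\cong\; \mathbb{Z}\otimes_{\mathbb{N}} M,
$$
the first step coming from part (1). Since $M$ is a sub-$\mathbb{N}$-module of the abelian group $F$,~(\ref{lemma: positive lemma 1}) gives injectivity of $\mathbb{Z}\otimes_{\mathbb{N}}M\to F$, which is the required injectivity of $\mathcal{O}_F\otimes_{\mathcal{O}_{F_+}}M\to F$. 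For the bijectivity claim, I would rewrite
$$
F\otimes_{\mathcal{O}_{F_+}}M
\;\cong\; F\otimes_{\mathcal{O}_F}(\mathcal{O}_F\otimes_{\mathcal{O}_{F_+}}M),
$$
view the right factor as a sub-$\mathcal{O}_F$-module of $F$ by what was just shown, and apply flatness of the localization $\mathcal{O}_F\hookrightarrow F$ to conclude that the induced map into $F\otimes_{\mathcal{O}_F}F=F$ is injective. Surjectivity when $M\neq 0$ is elementary: pick any nonzero $m\in M$ and note that an arbitrary $y\in F$ is the image of $(y/m)\otimes m$.

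I do not anticipate a serious obstacle: the whole lemma is organized around the shift trick above together with the associativity/flatness manipulations made legal by~(\ref{lemma: positive lemma 1}) and part (1). The only point requiring a moment of care is the verification that the composite $\mathbb{Z}\otimes_{\mathbb{N}}M\to F$ is literally the ``inclusion after linearization'' map, so that~(\ref{lemma: positive lemma 1}) applies; this is a routine diagram chase.
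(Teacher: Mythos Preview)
Your proposal is correct and follows essentially the same route as the paper: the ``shift by a large integer'' trick for surjectivity in (1), injectivity from~(\ref{lemma: positive lemma 1}), and in (2) the associativity identification $\mathcal{O}_F\otimes_{\mathcal{O}_{F_+}}M\cong\mathbb{Z}\otimes_{\mathbb{N}}M$ followed by another application of~(\ref{lemma: positive lemma 1}). The only cosmetic difference is that the paper writes $\alpha=n-(n-\alpha)$ rather than $(\alpha+N)-N$, and deduces injectivity of $F\otimes_{\mathcal{O}_{F_+}}M\to F$ in one line rather than spelling out the flatness-of-localization step.
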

\begin{proof}
(1): By~(\ref{lemma: positive lemma 1}), the map $\mathbb{Z}\otimes_{\mathbb{N}}\Dedplus \to \Ded$, obtained from the inclusion $\Dedplus \hookrightarrow \Ded$, is injective. To show it is surjective, take $\alpha \in \Ded$. Then for a sufficiently large $n \in \mathbb{Z}$, we have $\sigma(n - \alpha) \geq 0$ for all $\sigma \in S$. Indeed, any $n \geq \sup\{\sigma(\alpha) \mid \sigma \in S\}$ works. Then, as $\alpha = n-(n-\alpha)$, our element $\alpha$ is the image of $1\otimes n +(-1) \otimes (n-\alpha) \in \mathbb{Z}\otimes_\mathbb{N}\Dedplus$. It follows that the map is surjective, and hence an isomorphism.

From this it follows that the second map $\mathbb{Q}\otimes_\mathbb{N}\Dedplus\to F$ is also an isomorphism.

(2): From (1) and~(\ref{lemma: positive lemma 1}), the composition
\[
\Ded\otimes_{\Dedplus}M = (\mathbb{Z}\otimes_\mathbb{N}\Dedplus)\otimes_{\Dedplus}M = \mathbb{Z}\otimes_\mathbb{N} M \hookrightarrow F
\]
is an injection.
Therefore so is the map $F\otimes_{\Dedplus}M \hookrightarrow F$.
It is surjective because the image is the $F$-linear span of $M$, which is assumed to be nonzero. 
\end{proof}

It follows that for nonzero sub-$\Dedplus$-modules $M$ and $N$ of $F$, the following composition
is injective:
\[
\Hom_{\Dedplus}(M,N) \hookrightarrow \Hom_{\Dedplus}(M,F) = \Hom_F(F\otimes_{\Dedplus}M, F) = \Hom_F(F,F)=F. 
\]
We can then identify $\Hom_{\Dedplus}(M,N)$ as follows:
\begin{equation}
	\label{eq:maps-in-terms-of-elements}
	\Hom_{\Dedplus}(M,N) =\{\beta \in F \mid \beta  M \subseteq N\}. 
\end{equation}
In particular, we obtain the following:
\begin{equation}\label{eq: identification}
M^\vee=\{\beta \in F \mid \beta   M  \subseteq \Dedplus\}. 
\end{equation}

\begin{proposition} \label{pro:reflexivity-in-term-of-elements}
	Let $M$ be a nonzero sub-$\Dedplus$-module of $F$. Then in terms of the above, we have the following:
	\begin{enumerate}
		\item The canonical map $\delta_M\:M\to M^{\vee\vee}$ is the inclusion $M\subseteq M^{\vee\vee}$.
		\item The inclusion $M\subseteq M^{\vee\vee}$ is an equality if and only if $M$ is reflexive.
		\item $M^\vee$ is reflexive.
		\item $M^{\vee\vee}$ is the reflexive completion of $M$.
	\end{enumerate}
\end{proposition}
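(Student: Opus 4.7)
The approach is to express everything in terms of the identification (\ref{eq:maps-in-terms-of-elements}), which embeds Hom-modules between nonzero sub-$\sO_{F_+}$-modules of $F$ as subsets of $F$. First, apply the identification to write $M^{\vee}=\{\beta\in F\mid \beta M\subseteq\sO_{F_+}\}$. Then, under the standing hypothesis that $M^{\vee}$ is nonzero (implicit in the proposition, and automatic in all applications of interest, such as when $M$ is finitely generated), apply the identification again to realize $M^{\vee\vee}=\{\gamma\in F\mid \gamma M^{\vee}\subseteq\sO_{F_+}\}$, so that all of $M,\,M^{\vee},\,M^{\vee\vee}$ live inside $F$.

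For (1), unwind the canonical map $\delta_M$: it sends $m\in M$ to the evaluation $\chi\mapsto\chi(m)$. Under the identification $\chi\leftrightarrow\beta$ with $\chi(x)=\beta x$, this evaluation functional becomes $\beta\mapsto\beta m$, which under the identification of $M^{\vee\vee}$ inside $F$ corresponds precisely to the element $m\in F$ (since $\gamma\beta=\beta m$ for all $\beta$ forces $\gamma=m$). Hence $\delta_M$ is the tautological inclusion $M\subseteq M^{\vee\vee}$ of submodules of $F$, which is (1). Statement (2) is then immediate, since an inclusion of subsets of $F$ is an isomorphism if and only if it is an equality.

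For (3), apply (1) to $M^{\vee}$ in place of $M$ (it is nonzero by hypothesis) to get $M^{\vee}\subseteq M^{\vee\vee\vee}$. Conversely, if $\delta\in M^{\vee\vee\vee}$ then $\delta M^{\vee\vee}\subseteq\sO_{F_+}$, and since $M\subseteq M^{\vee\vee}$ by (1), this forces $\delta M\subseteq\sO_{F_+}$, i.e., $\delta\in M^{\vee}$. Hence $M^{\vee}=M^{\vee\vee\vee}$, and (2) gives that $M^{\vee}$ is reflexive. For (4), apply (3) with $M^{\vee}$ in place of $M$ to conclude $M^{\vee\vee}=(M^{\vee})^{\vee}$ is reflexive; the universal property in~(\ref{pro:reflexive-completion}) then exhibits $M^{\vee\vee}$ together with $\delta_M$ as the reflexive completion of $M$.

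The main (and really only) obstacle is the bookkeeping in (1): carefully tracing the canonical double-dual map through the two layers of identification to see that it collapses to the tautological inclusion. Once this is done, (2)--(4) fall out of elementary set-theoretic containments among $M$, $M^{\vee}$, and $M^{\vee\vee}$ as subsets of $F$.
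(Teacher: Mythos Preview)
Your proposal is correct and follows essentially the same route as the paper: unwind the pairing to see $\delta_M$ is the inclusion, deduce (2) immediately, obtain (3) from the containment $M\subseteq M^{\vee\vee}$ by dualizing (your element-wise argument is exactly this), and invoke~(\ref{pro:reflexive-completion}) for (4). Your explicit flagging of the hypothesis $M^\vee\neq\{0\}$ is a welcome bit of care that the paper leaves implicit.
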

\begin{proof}
(1): The canonical pairing $M^\vee\times M\to \Dedplus$ is given by $(\chi,\alpha)\mapsto \chi\alpha$.
Therefore $\delta_M$ is given by $\alpha\mapsto\alpha$.

(2): This follows from (1).

(3): Dualizing the containment $M\subseteq M^{\vee\vee}$, one obtains $M^\vee\supseteq M^{\vee\vee\vee}$.
Therefore $M^\vee$ is reflexive by (2).

(4): Apply~(\ref{pro:reflexive-completion}), noting that $M^{\vee\vee}$ is reflexive by (3).
\end{proof}

\begin{proposition} \label{pro:I-plus-is-reflexive}
Let $I$ be a fractional ideal of $F$, and let $I_+$ denote the sub-$\Dedplus$-module $I \cap F_+$. Then we have the following:
	\begin{enumerate}
		\item $\Ded\otimes_{\Dedplus}I_+ \to I$ is an isomorphism,
		\item $(I_+)^\vee=(I^\vee)_+$,
		\item $I_+$ is reflexive.
	\end{enumerate}
\end{proposition}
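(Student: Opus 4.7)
The plan is to prove (1) by imitating the shift trick of~(\ref{lemma: positive lemma 2})(1) inside~$I$, then deduce (2) as a purely elementwise check using the identification~(\ref{eq: identification}), and finally deduce (3) by applying (2) twice to the fractional ideal $I^\vee$.

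First I would prove (1). Injectivity of $\mathcal{O}_F\otimes_{\mathcal{O}_{F_+}} I_+ \to F$ is already given by~(\ref{lemma: positive lemma 2})(2), and the image clearly lands in $I$ since $I_+\subseteq I$ and $I$ is an $\mathcal{O}_F$-module. So it remains to show that every $\gamma\in I$ lies in the image. The key step is to produce a nonzero positive rational integer in $I$: given any nonzero $\alpha\in I$, the norm $N_{F/\mathbb{Q}}(\alpha)=\alpha\prod_{\sigma\neq\mathrm{id}}\sigma(\alpha)$ lies in $\alpha\mathcal{O}_F\subseteq I$ and in $\mathbb{Z}\smallsetminus\{0\}$, so its absolute value $k$ is in $I\cap\mathbb{Z}_{>0}\subseteq I_+$. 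Then, since $S$ is finite, I pick a positive integer $n$ with $nk+\sigma(\gamma)\geq 0$ for all $\sigma\in S$; both $nk$ and $nk+\gamma$ then lie in $I_+$, so $\gamma=(nk+\gamma)-nk$ is in the image of $\mathcal{O}_F\otimes_{\mathcal{O}_{F_+}} I_+$ (using $\pm 1\in\mathcal{O}_F$).

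Next, for (2), I would use~(\ref{eq: identification}) to rewrite both sides of the desired equality as subsets of $F$. The containment $(I^\vee)_+\subseteq (I_+)^\vee$ is immediate: any $\beta\in F_+$ with $\beta I\subseteq\mathcal{O}_F$ satisfies $\beta I_+\subseteq\mathcal{O}_F\cap F_+=\mathcal{O}_{F_+}$. Conversely, given $\beta\in F$ with $\beta I_+\subseteq\mathcal{O}_{F_+}$, I check two things. Total nonnegativity of $\beta$ follows by taking the element $k\in I_+\cap\mathbb{Z}_{>0}$ produced in (1): then $\beta k\in\mathcal{O}_{F_+}$, so $k\sigma(\beta)\geq 0$ and hence $\sigma(\beta)\geq 0$ for each $\sigma\in S$. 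And $\beta I\subseteq\mathcal{O}_F$ follows from (1), which gives $I=\mathcal{O}_F\cdot I_+$, so $\beta I=\mathcal{O}_F\cdot(\beta I_+)\subseteq\mathcal{O}_F\cdot\mathcal{O}_{F_+}=\mathcal{O}_F$.

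Finally, for (3), I would apply (2) twice to the fractional ideal $I^\vee$, obtaining
\[
(I_+)^{\vee\vee}=\bigl((I^\vee)_+\bigr)^\vee=\bigl((I^\vee)^\vee\bigr)_+=I_+,
\]
where the last equality uses that $I\mapsto I^\vee$ is an involution on the group of fractional ideals. By~(\ref{pro:reflexivity-in-term-of-elements})(2), this is exactly the assertion that $I_+$ is reflexive.

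The main obstacle is (1): once it is in hand, parts (2) and (3) are essentially formal. The delicate point in (1) is the existence of a nonzero element of $I_+\cap\mathbb{Z}$, which is what makes the shift trick of~(\ref{lemma: positive lemma 2}) go through inside $I$ rather than just inside $\mathcal{O}_F$; producing it via the norm is the one genuinely arithmetic input needed.
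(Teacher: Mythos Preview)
Your argument follows essentially the same route as the paper's: the shift trick for (1), an elementwise verification for (2) using the identification~(\ref{eq: identification}) and part (1), and a double application of (2) for (3). The paper shifts a generating set of $I$ into $I_+$ all at once, while you shift one element at a time, but this is the same idea.

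There is one small slip. In (1) you take an arbitrary nonzero $\alpha\in I$ and assert that $N_{F/\mathbb{Q}}(\alpha)\in\alpha\mathcal{O}_F$ and $N_{F/\mathbb{Q}}(\alpha)\in\mathbb{Z}$. Both can fail when $\alpha$ is not integral: for instance with $F=\mathbb{Q}$, $I=\tfrac{1}{2}\mathbb{Z}$, $\alpha=\tfrac{1}{2}$, the norm is $\tfrac{1}{2}\notin\mathbb{Z}$. The fix is immediate: choose $\alpha\in I\cap\mathcal{O}_F$, which is nonzero since for any nonzero $\gamma\in I$ there is a nonzero $d\in\mathbb{Z}$ with $d\gamma\in\mathcal{O}_F$, and $d\gamma\in I$ as well. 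With that adjustment your norm argument goes through, and the rest of the proof is correct as written. (The paper, incidentally, simply asserts the existence of $m\in I\cap\mathbb{Z}$ without justification; your attempt to justify it is a good instinct.)
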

\begin{proof}
(1): Injectivity follows from~(\ref{lemma: positive lemma 1}) and~(\ref{lemma: positive lemma 2}):
\[
\Ded\otimes_{\Dedplus}I_+ = \mathbb{Z}\otimes_{\bN}{\Dedplus}\otimes_{\Dedplus} I_+ = \mathbb{Z}\otimes_\mathbb{N}I_+ \hookrightarrow I.
\]
To show surjectivity, let $\alpha_1,\dots,\alpha_n$ be generators for $I$. 
Take $m \in I\cap \mathbb{Z}$ such that $m \geq |\sigma(\alpha_i)|$ 
for all $\sigma \in S$ and $i \in \{1,\dots,n\}$. Let $\alpha_i'=\alpha_i+m$. 
Then $\alpha_1',\dots,\alpha_n'$ generate $I$ and for any $\sigma \in S$, 
we have $\sigma(\alpha_i')\geq 0$ for all $i$. 
It follows that $\alpha_i' \in F_+$, and hence $\alpha_i' \in I_+$. 
The map $\Ded\otimes_{\Dedplus}I_+ \to I$ is therefore surjective because its image is
a sub-$\Ded$-module containing the generators $\alpha_1',\dots,\alpha_n'$ of $I$. 
	
(2): First note that from \eqref{eq: identification}, we have
\begin{align*}
(I_+)^\vee &=\{\beta \in F \mid \beta  I_+ \subseteq \Dedplus\}\\
(I^\vee)_+ &=\{\beta \in F \mid \beta \in F_+ \textrm{ and } \beta  I \subseteq \Ded\}.
\end{align*}
Suppose $\beta \in (I_+)^\vee$. There exists a nonzero element $\alpha \in I_+$. 
Then $\beta\alpha \in \Dedplus$. 
It follows that for any $\sigma \in S$, we have $\sigma(\beta)\sigma(\alpha) \geq 0$. 
Since $\alpha \in I_+$ and $\alpha \neq 0$, we then have $\sigma(\beta)\geq 0$, which is to say $\beta \in F_+$. 
Also for any $\alpha_1,\alpha_2 \in I_+$, we have $\beta\alpha_1,\beta\alpha_2 \in \Dedplus$, 
and hence
	\[
	\beta(\alpha_1 - \alpha_2) = \beta\alpha_1 - \beta\alpha_2 \in \Ded. 
	\] 
It follows that $\beta  I \subseteq \Ded$. 
So $\beta \in I^\vee$, and hence $\beta \in I^\vee \cap F_+=(I^\vee)_+$. 

Now take $\beta \in (I^\vee)_+$. Then $\beta  I_+ \subseteq \beta  I \subseteq \Ded$. 
Also, $\beta \in F_+$. So $\beta  I_+ \subseteq F_+$, 
and hence $\beta   I_+ \subseteq F_+ \cap \Ded =\Dedplus$, 
showing that $\beta \in (I_+)^\vee$. 

(3): Applying (2) twice, we have 
	\[
	(I_+)^{\vee\vee} = ((I^\vee)_+)^\vee = (I^{\vee\vee})_+ = I_+.
	\]
Therefore $I_+$ is reflexive by~(\ref{pro:reflexivity-in-term-of-elements}), part (2).
\end{proof}

\begin{remark}
Is $I_+$ a flat $\Dedplus$-module? Is it finitely generated?
\end{remark}

\begin{lemma} \label{lem:reflexive-maps-injectively}
Let $L$ be a reflexive $\Dedplus$-module. Then $L \to F\otimes_{\Dedplus} L$ is injective.
\end{lemma}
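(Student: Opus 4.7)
The plan is to extract the injectivity directly from the reflexivity hypothesis by testing against all linear functionals in $L^\vee$. The idea is that the hypothesis $L \longisomap L^{\vee\vee}$ says exactly that $L^\vee$ separates points, and functionals descend cleanly through the base change to $F$.

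Suppose $\alpha \in L$ maps to $0$ in $F \otimes_{\mathcal{O}_{F_+}} L$. For each $\chi \in L^\vee = \Hom_{\mathcal{O}_{F_+}}(L, \mathcal{O}_{F_+})$, I would base-change $\chi$ by $F$ to obtain an $F$-module homomorphism
\[
1_F \otimes \chi \: F \otimes_{\mathcal{O}_{F_+}} L \longmap F \otimes_{\mathcal{O}_{F_+}} \mathcal{O}_{F_+} = F,
\]
under which $1 \otimes \alpha \mapsto \chi(\alpha)$. The hypothesis $1 \otimes \alpha = 0$ then gives $\chi(\alpha) = 0$ in $F$. Since $\mathcal{O}_{F_+}$ is by definition a subset of $F$ (so the structure map $\mathcal{O}_{F_+} \to F$ is injective), we conclude $\chi(\alpha) = 0$ already in $\mathcal{O}_{F_+}$.

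Since this holds for every $\chi \in L^\vee$, the element $\alpha$ lies in the kernel of the canonical evaluation map $\delta_L\: L \to L^{\vee\vee}$, $\alpha \mapsto (\chi\mapsto \chi(\alpha))$. But $L$ is reflexive, so $\delta_L$ is an isomorphism and hence $\alpha = 0$.

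There is no substantive obstacle here; the argument is really just packaging the universal property of $\otimes$ against reflexivity. The only minor point to verify is the identification $F \otimes_{\mathcal{O}_{F_+}} \mathcal{O}_{F_+} = F$, which is the unit law for the tensor product, using that $F$ is an $\mathcal{O}_{F_+}$-module via the inclusion $\mathcal{O}_{F_+}\subseteq F$ that was already invoked in~(\ref{lemma: positive lemma 2}).
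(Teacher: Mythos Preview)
Your proof is correct and is essentially the same argument as the paper's, just unpacked element-wise. The paper phrases it more categorically: since $L\longisomap L^{\vee\vee}=\Hom_{\mathcal{O}_{F_+}}(L^\vee,\mathcal{O}_{F_+})\hookrightarrow \Hom_{\mathcal{O}_{F_+}}(L^\vee,F)$ is an injection into an $F$-module, the canonical map $L\to F\otimes_{\mathcal{O}_{F_+}}L$ must be injective; your version traces through exactly what this injection does on a given element $\alpha$.
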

\begin{proof}
It is enough to show that $L$ can be mapped injectively to some $F$-module, and this follows from reflexivity:
\[
L \longisomap L^{\vee\vee}=\Hom_{\Dedplus}(L^\vee,\Dedplus) \hookrightarrow \Hom_{\Dedplus}(L^\vee,F). 
\]
\end{proof}

\begin{proposition}\label{pro:surjectivity}
Let $L$ be a reflexive $\Dedplus$-module of rank 1.
\begin{enumerate}
	\item $L$ is isomorphic to a sub-$\Dedplus$-module $M$ of $F$ such that $M\cap F_+\neq \{0\}$.
	\item For any such $M$, let $I$ denote the sub-$\Ded$-module of $F$ generated by $M$. Then $I$
	is a fractional ideal and we have $M=I_+$.
\end{enumerate}
\end{proposition}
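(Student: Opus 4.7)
The plan is to combine Lemma \ref{lem:reflexive-maps-injectively} with weak approximation for part (1). Since $L$ has rank $1$, the $F$-vector space $V := F \otimes_{\mathcal{O}_{F_+}} L$ is one-dimensional, and the lemma gives an injection $L \hookrightarrow V$; choosing an $F$-linear isomorphism $V \cong F$ realizes $L$ as a sub-$\mathcal{O}_{F_+}$-module $M_0$ of $F$. To arrange $M \cap F_+ \neq \{0\}$, I would pick a nonzero $\alpha \in M_0$ and invoke weak approximation at the real places in $S$ to produce $\gamma \in F^\times$ with $\sigma(\gamma)\sigma(\alpha) > 0$ for every $\sigma \in S$; then $M := \gamma M_0$ is isomorphic to $L$ and contains $\gamma\alpha \in F_+^\times$.

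For part (2), the real substance is an intermediate step: that in fact $M \subseteq F_+$. Once this is known, both that $I := \mathcal{O}_F M$ is a fractional ideal and the equality $M = I_+$ come out cleanly. To prove $M \subseteq F_+$, I would fix $\beta \in M \cap F_+^\times$ and use the identification $M^\vee = \{\gamma \in F : \gamma M \subseteq \mathcal{O}_{F_+}\}$ from \eqref{eq: identification}. Any $\gamma \in M^\vee$ satisfies $\gamma\beta \in \mathcal{O}_{F_+}$, so $\sigma(\gamma)\sigma(\beta) \geq 0$ for all $\sigma \in S$; since $\sigma(\beta) > 0$, this forces $M^\vee \subseteq F_+$. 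Reflexivity then rules out $M^\vee = 0$, because $0^\vee = 0 \neq M$. Finally, for $\alpha \in M$ and any nonzero $\gamma \in M^\vee$, the inequality $\sigma(\gamma)\sigma(\alpha) \geq 0$ combined with $\sigma(\gamma) > 0$ (a nonzero field element has nonzero image under a real embedding) yields $\sigma(\alpha) \geq 0$, hence $\alpha \in F_+$. I expect this step to be the main obstacle, as it is the only place where reflexivity, rank $1$, and the hypothesis $M \cap F_+ \neq \{0\}$ must all be combined.

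The remainder is bookkeeping. To see that $I = \mathcal{O}_F M$ is a fractional ideal, I would pick any nonzero $\gamma \in M^\vee$, clear denominators to produce $c \in \mathcal{O}_F \setminus \{0\}$ with $c\gamma \in \mathcal{O}_F$, and observe $(c\gamma) I = \mathcal{O}_F \cdot c\gamma M \subseteq \mathcal{O}_F$. For the equality $M = I_+$, the inclusion $M \subseteq I_+$ is immediate from $M \subseteq I$ and $M \subseteq F_+$. Conversely, given $\delta \in I_+$ and any $\gamma \in M^\vee$, the product $\gamma\delta$ lies in $\mathcal{O}_F$ (since $\delta \in \mathcal{O}_F M$ and $\gamma M \subseteq \mathcal{O}_{F_+}$) and in $F_+$ (product of two elements of $F_+$), hence in $\mathcal{O}_{F_+}$; this shows $\delta \in M^{\vee\vee} = M$ by reflexivity, completing the reverse inclusion.
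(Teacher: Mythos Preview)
Your proof is correct and follows essentially the same route as the paper. The one difference is in part~(1): where you invoke weak approximation to find $\gamma$, the paper simply takes $\gamma = \alpha$ (any nonzero element of $M_0$), so that $\alpha^2 \in \gamma M_0 \cap F_+$ automatically since squares are nonnegative at every real place---no external input needed.
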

\begin{proof}
(1): Choose an isomorphism $F\otimes_{\Dedplus}L\to F$, and let $N$ denote the image of $L$.
By~(\ref{lem:reflexive-maps-injectively}), the map $L\to N$ is an isomorphism.
Now let $\beta \in N$ be a nonzero element, and let $M=\beta N$. Then $M$ is isomorphic to $N$ and hence to $L$, 
and $\beta^2$ is a nonzero element in $M\cap F_+$.

(2): Since $L$ is reflexive and nonzero, so is $M$. Therefore we have $M^\vee \neq \{0\}$. 
So there exists a nonzero $\chi \in F$ satisfying $\chi   M \subseteq \Dedplus$.
It follows that $M \subseteq \frac{1}{\chi}\Dedplus$ and hence 
$I\subseteq \frac{1}{\chi}\Ded$. 
Thus $I$ is a fractional ideal of $F$. 

Now consider the second statement. We first claim that $M \subseteq F_+$. 
By (1), there exists a nonzero $\alpha_0 \in M \cap F_+$ and, as above, a nonzero $\chi \in M^\vee$. 
Then since $\chi\alpha_0 \in \Dedplus\subseteq F_+$ and since $F_+$ is a semifield, 
we have $\chi\in F_+$.
Therefore $M^\vee\subseteq F_+$.
 
For every $\alpha \in M$, we have $\chi\alpha \in \Dedplus$. 
Again since $F_+$ is a semifield, we have $\alpha \in F_+$. It follows that $M\subseteq F_+$, and hence
$M\subseteq I \cap F_+=I_+$. 

To show $I_+\subseteq M$, consider any element $\alpha \in I_+$. 
Since $M$ is reflexive, it is enough to prove $\alpha \in M^{\vee\vee}$, 
or equivalently $\alpha  M^\vee \subseteq \Dedplus$. 
We will prove the following form: for any $\beta \in F$ such that 
$\beta   M \subseteq \Dedplus$, we have $\alpha\beta \in \Dedplus$. 

Since $\beta  M\subseteq \Dedplus$, 
we have $\beta   I \subseteq \Ded$ and hence $\alpha\beta \in R$. 
It remains to prove $\alpha\beta \in F_+$.
Since $\beta  M \subseteq F_+$ and $M \cap F_+ \neq \{0\}$, we have $\beta \in F_+$ and hence $\alpha \beta \in F_+$, since $\alpha \in F_+$.
\end{proof}

\begin{ssec}\label{pf:narrow-class-group}
\emph{Proof of~(\ref{thm:narrow-class-group}).}

(1): 
This follows from~(\ref{pro:I-plus-is-reflexive}).

(2):
Let $I$ be a fractional ideal in $F$ and $\beta \in F_+^{\times}$. 
Then we have an isomorphism of $\Dedplus$-modules $I_+ \to (\beta  I)_+$ sending $\alpha$ to $\beta\alpha$. 
It follows that the map $I\mapsto I_+$ factors through the narrow class group $\Cl_S(F)$ to define a map
$${\Cl_S}(F) \to \Pic^{\mathrm{refl}}(\Dedplus).$$

(3): The map is surjective by~(\ref{pro:surjectivity}). For injectivity, let $I$ and $J$ be fractional ideals
such that $I_+ \cong J_+$. By (\ref{eq:maps-in-terms-of-elements}), there exists a $\beta\in F$ such that $\beta
I_+ = J_+$. It follows that $\beta I=J$ and $\beta \in F_+^{\times}$, since $I_+ \neq \{0\}$. Therefore the
fractional ideals $I$ and $J$ agree in the narrow class group ${\Cl_S}(F)$.
\qed
\end{ssec}

\begin{remark}
	It follows that $\Pic^{\mathrm{refl}}(\Dedplus)$ classifies reflexive $\Dedplus$-modules
	which are invertible over $\sO_F$.

	A reflexive class group has also been considered in algebraic geometry over fields, 
	and especially for varieties $X$ with normal singularities,
	where it agrees with the Weil divisor class group. (See the Stacks Project~\cite{stacks-project}, tag 0EBK.)
	Whether there is some substance behind this analogy is not clear.
\end{remark}

\begin{remark}
It follows that the group structure on 	$\Pic^{\mathrm{refl}}(\Dedplus)$ is given by
	$$
	L\cdot L' = (L_\bZ\otimes_{\Ded}L'_\bZ)_+.
	$$
See also~\cite{stacks-project}, tag 0AVT.
\mpar{{\color{red}Is it also given by $(L\otimes_{\Dedplus}L')^{\vee\vee}$? Reference Stacks Project, tag 0EBK and  tag 0AVT, Reflexive Modules}}
\end{remark}

\begin{remark}
Take $R$ to be the ring $\mathcal{O}_F$ of algebraic integers in $F$, and $S$ to be the set of all real places.
Then $\Cl_S(F)$ is the usual narrow class group of $F$, 
which has a description as an adelic double quotient:
\begin{equation} \label{eq:narrow-adelic}
	\Cl_S(F) 
	= F^\times\backslash\sideset{}{'}\prod_{v} F_v^\times/\prod_{v}\mathcal{O}_{F_v}^\times	
\end{equation}
where $v$ runs over the non-complex places and we understand that at the real places, $\mathcal{O}_{F_v}$ denotes $\Rpos$.

The narrow class group appears as a certain compactly supported cohomology group 
$H^1_\mathrm{c}(\Spec(\mathcal{O}_F),\Gm)$ in Milne's book~\cite{Milne:arithmetic-duality-theorems}.
(See p.~207, Ch.\ II, remark 2.28.) 
It is a variant of the usual compactly supported cohomology
which incorporates the real places of $F$ in an \emph{ad hoc} manner using the Tate cohomology of $\Gal(\bC/\bR)$.
Milne writes, taking $X=\Spec(R)$:

\begin{quote}
Unfortunately $H^1_\mathrm{c}(X,\Gm)$ is not equal to the group of isometry classes of Hermitian invertible sheaves on $X$ (the ``compactified Picard group of $R$'' in the sense of Arakelov theory; see Szpiro\dots). I do not know if there is a reasonable definition of the \'etale cohomology groups of an Arakelov variety. Our definition of the cohomology groups with compact support has been chosen so as to lead to good duality theorems.
\end{quote}

Earlier, Deligne and Ribet~\cite{Deligne-Ribet}, (2.25), interpreted equation (\ref{eq:narrow-adelic}) in geometric but still \emph{ad hoc} terms. 
A narrow ideal class, and hence we imagine a
line bundle on the compactification $X\cup S$, is given by a trivialization over $F$ plus trivializations over each $\mathcal{O}_{F_v}$, modulo change of trivializations. 
This is standard, 
except that they understand a trivialization on a line at a real place to be a positive structure, 
or an orientation or a sign structure.

One could then view the present section as pointing out that by using semirings
we can implement the point of view of Deligne and Ribet systematically, 
thus offering a response to Milne's plaint.
\end{remark}

\begin{remark}
We can give a similar description of the reflexive Picard group for partially compactified arithmetic curves.
Let $T$ be a finite set of places of $F$ containing all the complex places.
Now consider $\overline{\Spec(\mathcal{O}_F)}-T$, 
the fully compactified arithmetic curve with the places in $T$ removed.
This can be written as $\Spec(R)\cup S$, where
$R$ is the subring of $F$ consisting of elements which are integral at all finite places not in $T$
and $S$ is the set of (real) infinite places not in $T$.

Then by (\ref{thm:narrow-class-group}) we have
	$$
	\Pic^{\mathrm{refl}}(\Dedplus) = \Cl_S(F)=
	F^\times\backslash\sideset{}{'}\prod_{v\not\in T} F_v^\times/\prod_{v\not\in T}\mathcal{O}_{F_v}^\times.
	$$
So up to change of trivializations, 
a reflexive $\Dedplus$-module of rank $1$ is given by a generic trivialization and
a formal trivialization at all $v\not\in T$, where again a trivialization over $\bR$ is a positive structure.
The formula above is the usual one for the Picard group of a curve missing the points in a finite set $T$, 
which is satisfying.
\end{remark}

\begin{example}
Let $F=\bQ(\sqrt{3})$, let $\Ded=\mathcal{O}_F$, 
and let $S$ consist of both the real places $\sigma_1, \sigma_2$. 
We will show that the ordinary Picard group $\Pic(\Dedplus)$ does not agree with the narrow class group
$\Pic^{\mathrm{refl}}(\Dedplus)$.
More specifically, we will show that the $\Dedplus$-module 
$$L:=\{\alpha\in \Ded\mid \sigma_1(\alpha)\geq 0, \sigma_2(\alpha)\leq 0\}$$ represents a class in
the reflexive Picard group but not the ordinary Picard group.

Observe that $L$ is of rank $1$, by (\ref{lemma: positive lemma 1}). It also satisfies $L^\vee=L$.
(One can show this by hand, or observe that $\sqrt{3}:L\isomap I_+$, 
where $I=(\sqrt{3})$, and apply (\ref{lemma: positive lemma 2}).)
In particular, it is reflexive.

But it is not invertible. To prove this, it is enough to show that the map 
	$$
	L\otimes_{\Dedplus} L=L\otimes_{\Dedplus} L^\vee\to \Dedplus
	$$ 
is not an isomorphism. 
So suppose for a contradiction that $1$ has a preimage $\sum_{i=1}^r \alpha_i\otimes \beta_i$. 
Thus we have $1=\sum_{i=1}^r \alpha_i\beta_i$.
We may assume $r\geq 1$, $\alpha_i\neq 0$, and $\beta_i\neq 0$. 
Since the $\alpha_i$ and $\beta_i$ have negative norm and the fundamental unit $2+\sqrt{3}$ has positive norm, 
they cannot be units. 
Therefore we have $N(\alpha_i),N(\beta_i)\leq -2$, and hence $N(\alpha_i\beta_i)\geq 4$.
So all the points $(\sigma_1(\alpha_i\beta_i),\sigma_2(\alpha_i\beta_i))\in\Rpos^2$ 
lie above the hyperbola $xy=4$.
But the point $(1,1)$ cannot be a sum of such points. So $1$ cannot be a sum of elements $\alpha_i\beta_i$.
\end{example}

\section{$\GL_n$ is not flat over $\bN$}
\label{section: GL_n is not flat}

This section and the next form an appendix to the paper. They are not used elsewhere. But we feel they
shed some light on how the theory of vector bundles differs when one passes from rings to semirings.
For instance over rings, moduli stacks are often constructed by quotienting by group schemes like $\GL_n$.
So the fact that it is rarely flat over semirings which are not rings would appear to present some complications.

For any $\bN$-algebra $R$, let $\GL_n(R)$ denote the group of automorphisms of the $R$-module $R^n$. 
We can identify it with the set of $n\times n$ matrices $M$ with a two-sided inverse.
(We note that by Reutenauer--Straubing~\cite{reutenauer1984inversion} every one-sided inverse of a square matrix 
over an arbitrary semiring is in fact a two-sided inverse, but we will have no need of this fact below.)
The functor $\GL_n$ is representable:
	$$
	\GL_n(R) = \Hom_{\bN}(A_n,R),
	$$
where $A_n$ is the semiring
	$$
	A_n = \bN[x_{ij}, y_{ij} \mid 1\leq i,j \leq n]/(\sum_k x_{ik}y_{kj}\sim \delta_{ij}, \sum_k y_{ik}x_{kj}\sim \delta_{ij}),
	$$
where $\delta_{ij}$ denotes the Kronecker delta function. 

Write $\GL_{n/R}$ for the restriction of $\GL_n$ to $\CAlg{R}$. So $\GL_{n/R} = \Spec(R\otimes_{\bN} A_n)$.

\begin{proposition}
	\label{pro:GL_n-not-flat}
	Suppose $n\geq 2$. Let $R$ be a negative-free semiring such that $\GL_{n/R}$ is flat over $R$.
	Then $\bZ\otimes_\bN R=\{0\}$
\end{proposition}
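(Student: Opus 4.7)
The plan is to exploit the off-diagonal defining relations of $\GL_n$ to derive a nontrivial zero product in $B:=R\otimes_\bN A_n$, and then to recognize the base change of that product to $\bZ$ as a polynomial identity that can only hold when $\bZ\otimes_\bN R$ vanishes.

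First I would observe that $B$ is flat over $R$ by hypothesis and hence, by Proposition~\ref{pro:neg-free-and-canc-flat-local}, is negative-free since $R$ is. Taking $(i,j) = (1,2)$ in the defining relations of $A_n$ gives $\sum_{k=1}^n x_{1k}\, y_{k2} = 0$ in $B$, which by negative-freeness splits into the individual vanishings $x_{1k}\,y_{k2} = 0$; in particular, $x_{11}\,y_{12} = 0$ in $B$ (this is where $n \geq 2$ is used, so that the sum is genuinely nontrivial).

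Next I would base change along $\bN \to \bZ$. Writing $R_\bZ := \bZ \otimes_\bN R$, the classical identification $\bZ \otimes_\bN A_n = \bZ[\GL_n] = \bZ[x_{ij}]_{\det(X)}$ (over any commutative ring, $XY = I$ implies $YX = I$, and Cramer's rule expresses $Y$ as signed cofactors divided by $\det(X)$) gives $\bZ \otimes_\bN B = R_\bZ[x_{ij}]_{\det(X)}$, under which $y_{12} = -\det(\hat X_{21})/\det(X)$, where $\hat X_{21}$ is the $(n-1) \times (n-1)$ submatrix of $X = (x_{ij})$ obtained by deleting row $2$ and column $1$. The identity $x_{11}\, y_{12} = 0$ thus translates into $x_{11}\,\det(\hat X_{21}) = 0$ in $R_\bZ[x_{ij}]_{\det(X)}$. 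Because $\det(X)$ includes $+1$ among its coefficients (on the diagonal monomial $x_{11}\cdots x_{nn}$), McCoy's criterion says it is a non-zero-divisor in $R_\bZ[x_{ij}]$, so the localization map is injective and the equation already holds in the polynomial ring $R_\bZ[x_{ij}]$.

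Finally, $x_{11}\,\det(\hat X_{21})$ expands as a signed sum of $(n-1)!$ pairwise distinct monomials---one for each bijection $\tau: \{1,3,\dots,n\} \to \{2,3,\dots,n\}$, namely $x_{11}\prod_{i\in\{1,3,\dots,n\}} x_{i,\tau(i)}$---each with coefficient $\pm 1 \in \bZ \subseteq R_\bZ$. For this polynomial to vanish in $R_\bZ[x_{ij}]$ we need $\pm 1 = 0$ in $R_\bZ$, whence $\bZ\otimes_\bN R = R_\bZ = 0$, as required. The delicate points of the argument will be the base-change identification $\bZ \otimes_\bN A_n \cong \bZ[\GL_n]$ (which relies on commutativity being built into our setup, so that one-sided inverses of square matrices are automatically two-sided and Cramer's rule is available) and the non-zero-divisor status of $\det(X)$ over an arbitrary $R_\bZ$; both ultimately reduce to the simple observation that $\det(X)$ has $\pm 1$ as coefficients.
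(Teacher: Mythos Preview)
Your argument is correct. Both your proof and the paper's share the opening move: the off-diagonal relation $\sum_k x_{1k}y_{k2}=0$ together with negative-freeness of $B=R\otimes_\bN A_n$ (via Proposition~\ref{pro:neg-free-and-canc-flat-local}) forces each $x_{1k}y_{k2}$ to vanish individually. From there the routes diverge. The paper simply \emph{evaluates}: having $x_{12}y_{22}=0$ in $B$ means that every invertible matrix over any $R$-algebra satisfies this identity; plugging in an explicit $2\times 2$ integer block (padded by the identity) whose entries give $x_{12}y_{22}=-1$ yields $-1=0$ in $\bZ\otimes_\bN R$ immediately. You instead stay generic: you pass to $R_\bZ[x_{ij}]_{\det X}$ via Cramer's rule, clear the denominator using that $\det X$ is a non-zero-divisor (McCoy), and read off $\pm 1=0$ from the monomial expansion of $x_{11}\det(\hat X_{21})$. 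The paper's approach is shorter and avoids the auxiliary input (the coordinate-ring identification, McCoy's criterion); yours is more structural and perhaps explains ``why'' the identity is impossible rather than exhibiting a single witness. Either way the result follows.
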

\begin{proof}
	For $i=1$ and $j=2$, we have the following relation in $A_n$ and hence in $R\otimes_{\bN} A_n$:
		$$
		\sum_k x_{1k}y_{k2} = 0.
		$$
	Since $R$ is negative free and $R\otimes_{\bN}A_n$ is flat over it, $R\otimes_{\bN}A_n$ is also negative 
	free, by (\ref{pro:neg-free-and-canc-flat-local}).
	Therefore we have $x_{1k}y_{k2}=0$ for all $k$. In particular $x_{12}y_{22}=0$.
	It follows that for any $R$-algebra $S$, every invertible matrix $(x_{ij})$ with entries in $S$ satisfies
	$x_{12}y_{22}=0$, where $(y_{ij})$ is the inverse of $(x_{ij})$.

	Now consider the $n\times n$ block-diagonal integer matrices
		$$
		(x_{ij})=\left(
		\begin{array}{rr|r}
			1 & -1 \\ -1 & 2 \\ \hline & & 1_{n-2}
			\end{array}
		\right), \quad
		(y_{ij})=\left(
		\begin{array}{cc|c}
			2 & 1 \\ 1 & 1 \\ \hline & & 1_{n-2}
			\end{array}
		\right),
		$$
	which are inverses of one another. Their images in $\GL_n(\bZ\tn_\bN R)$
	are also inverses, and hence
	by the above, satisfy $x_{12}y_{22}=0$ in $\bZ\tn_\bN R$. 
	But $x_{12}y_{22}=-1$. Therefore we have $-1=0$ in $\bZ\tn_\bN R$, and hence 
	$\bZ\otimes_\bN R$ is the 
	zero ring. 
	\end{proof}

\begin{corollary}
\label{cor: GL_n is not flat}
If $n\geq 2$, then $\GL_n$ is not flat over $\bN$, $\Qpos$, or $\Rpos$, or indeed any nonzero negative-free 
additively cancellative semiring.
\end{corollary}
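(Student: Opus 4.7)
The plan is to deduce this corollary directly from Proposition~\ref{pro:GL_n-not-flat}, using only one additional observation about group completion. Specifically, suppose for contradiction that $R$ is a nonzero negative-free additively cancellative semiring for which $\GL_{n/R}$ is flat over $R$. Then the proposition immediately gives $\bZ\otimes_\bN R = \{0\}$, and the task reduces to showing that this contradicts the assumptions on $R$.

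The key point is that, because $R$ is additively cancellative, the canonical map $R \to \bZ\otimes_\bN R$ is injective. Indeed, as noted in Section~\ref{section: preliminaries}, for any commutative monoid $M$ the functor $\bZ\otimes_\bN\vbl$ produces its group completion, and the canonical map from $M$ to its group completion is injective precisely when $M$ is cancellative. Since $R$ is nonzero, we have $1 \neq 0$ in $R$, and therefore the image of $1$ is nonzero in $\bZ\otimes_\bN R$, forcing $\bZ\otimes_\bN R \neq \{0\}$. This contradicts the conclusion of Proposition~\ref{pro:GL_n-not-flat}, so $\GL_{n/R}$ cannot be flat over $R$.

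To conclude, one simply verifies that $\bN$, $\Qpos$, and $\Rpos$ are each nonzero, negative free, and additively cancellative, all of which are immediate from their descriptions as subsemirings of $\bR$. Hence the general statement applies to each of them.

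There is no real obstacle here: the proposition does the serious work by producing the explicit $2\times 2$ block integer matrices whose inverse relation forces $-1=0$ after base change, and the corollary is a short formal consequence. The only mildly technical ingredient is the injectivity of $R \to \bZ\otimes_\bN R$ under additive cancellativity, which is a standard fact about group completions that one could if desired reference from~\cite{borger2016witt}.
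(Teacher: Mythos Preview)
Your proof is correct and follows essentially the same route as the paper's: assume flatness, invoke Proposition~\ref{pro:GL_n-not-flat} to get $\bZ\otimes_\bN R=\{0\}$, and then use additive cancellativity to conclude that the map $R\to\bZ\otimes_\bN R$ is injective, contradicting $R\neq\{0\}$. The paper's version is more terse, but the argument is identical.
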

\begin{proof}
Let $R$ be such a semiring. If $\GL_{n/R}$ were flat, by~(\ref{pro:GL_n-not-flat}) the map
$$
R \longmap \bZ\otimes_\bN R = \{0\}
$$
would be injective, since $R$ is additively cancellative. This contracts the assumption $R\neq\{0\}$.
\end{proof}

\section{$\GL_n$ near the Boolean fiber}
\label{section: GL_n near the Boolean fiber}

A commutative monoid $M$ satisfies $\bB\otimes_\bN M=\{0\}$ if and only if $M$ is an abelian group. 
(See~(7.9) of \cite{borger2016boolean}.) 
So one might say in more scheme-theoretic language that an $\bN$-module is zero over the Boolean point of
$\Spec(\bN)$ if and only if it lies over the subscheme $\Spec(\bZ)$.
One might therefore say that an $\bN$-module $M$ is zero \emph{away} from the Boolean point
if $\bZ\otimes_\bN M$ is zero. 

The purpose of this section is then to show that over a semiring which is zero away from the Boolean point,
the group scheme $\GL_n$ reduces to the torus normalizer $\Gm^n\rtimes S_n$. 

First, we give some equivalent conditions:

\begin{proposition}\label{pro:universally negative-free equiv conditions}
	Let $R$ be a semiring. Then the following are equivalent:
	\begin{enumerate}
		\item every $R$-algebra is negative free (``$R$ is universally negative free''),
		\item $\bZ\otimes_\bN R = \{0\}$ (``$R$ is zero away from the Boolean point''),
		\item there exists an element $r\in R$ such that $r+1=r$.
	\end{enumerate}
\end{proposition}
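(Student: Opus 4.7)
The plan is to prove the implications (1)$\Rightarrow$(2)$\Leftrightarrow$(3)$\Rightarrow$(1).

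For (1)$\Rightarrow$(2), I would argue by contrapositive: if $\bZ\otimes_\bN R\neq\{0\}$, then it is a nonzero $R$-algebra, but any nonzero ring fails to be negative free (take any $x\neq 0$; then $x+(-x)=0$ exhibits a failure). This gives an $R$-algebra violating (1).

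For (2)$\Leftrightarrow$(3), I would use the standard description of $\bZ\otimes_\bN R$ as the group completion of the additive monoid of $R$, then tensored up to form a ring. The ring $\bZ\otimes_\bN R$ is zero if and only if $1=0$ there, which by the usual Grothendieck-group cancellation criterion for a commutative monoid happens iff there exists $r\in R$ with $1+r=r$. This gives (2)$\Leftrightarrow$(3) essentially for free.

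For (3)$\Rightarrow$(1), the key point is that the defining relation $r+1=r$ is preserved by any semiring homomorphism, so any $R$-algebra $S$ contains an element (namely the image of $r$) with the same absorbing property. Renaming, I may assume $r\in S$ satisfies $r+1=r$. Then for any $a\in S$, multiplying by $a$ gives $a+ar=ar$. Now if $x,y\in S$ satisfy $x+y=0$, the trick is to compute
\[
x \;=\; x + 0\cdot r \;=\; x + (x+y)r \;=\; (x+xr) + yr \;=\; xr + yr \;=\; (x+y)r \;=\; 0,
\]
and symmetrically $y=0$. So $S$ is negative free.

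The main obstacle is really just spotting the little identity in the last step; everything else is routine. I would probably preface the whole proposition with the observation that the element $r$ in (3) may be viewed as a ``point at infinity'' absorbing $\bN$, which both explains why such an $r$ forces negative-freeness (the presence of an absorbing element makes $+$ behave lattice-like enough to preclude nontrivial inverses) and why its existence is equivalent to the ring completion being trivial.
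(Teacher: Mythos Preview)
Your proof is correct and follows essentially the same route as the paper: the same cycle of implications, the same use of the group-completion description for (2)$\Leftrightarrow$(3), and the same algebraic trick for (3)$\Rightarrow$(1), just written as a single chain rather than two parallel computations. The only cosmetic difference is that you phrase (1)$\Rightarrow$(2) contrapositively, whereas the paper argues directly that $\bZ\otimes_\bN R$, being simultaneously negative free and an abelian group, must be zero.
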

\begin{proof}
	(1)$\Rightarrow$(2): Since $\bZ\otimes_\bN R$ is both an $R$-algebra and a $\bZ$-algebra, 
	it is both negative free and an abelian group.
	It therefore must be zero.

	(2)$\Leftrightarrow$(3): $\bZ\otimes_\bN R$ is isomorphic (canonically) to the group completion of $R$, 
	which is the set of	pairs $(a,b)$ (thought of as the formal difference $a-b$) modulo the relation 
		$$
		[(a,b)\sim (a',b')] = [\exists r.\; a+b'+r=a'+b+r].
		$$
	Since $R$ is a semiring (and not just a commutative monoid), 
	the group completion $\bZ\otimes_\bN R$ is a ring.
	It is therefore zero if and only if $1=0$ holds in it, 
	which is to say the relation $(1,0)\sim (0,0)$ holds.
	This is equivalent to the existence of an $r\in R$ such that $1+r=r$.
	
	(3)$\Rightarrow$(1):
	Suppose $x+y=0$ for some $x$ and $y$ in an $R$-algebra.
	Then we have
	\begin{align*}
	x + r x + r y &= x + r (x+y) = x,\\
	x + r x + r y &= (r+1)x +r y = r x + r y = r (x+y) = 0.	
	\end{align*}
	Combining these, we conclude $x = 0$, and hence the algebra is negative free.
\end{proof}

\begin{example}
The following semirings, and all algebras over them, are therefore universally negative free:
$\bB$, $\bN/(r+1\sim r)$, 
	$$
	\bN\cup\{\infty\}=\lim_{r\geq 0} \bN/(r+1\sim r),
	$$
and $\bN[r]/(r+1\sim r)$,
the last of which is weakly universal by the proposition above.
\end{example}

\begin{remark}
	We note that while being universally negative free and being zero away from the Boolean point
	appear to be formulated as genuine properties, the existence of an element $r$ such that $r+1=r$
	is formulated as the property underlying a structure---namely the data of such an element $r$.
	And indeed, the element $r$ is not generally unique. 
	For instance, in $\bB[x]$, any polynomial of the form $1+xf(x)$ satisfies $r+1=r$.
	Equivalently, the map $\bN\to\bN[r]/(r+1\sim r)$ is not an epimorphism of semirings, unlike
	for instance $\bN\to\bZ$. 
\end{remark}

\begin{theorem} 
	\label{thm:GLn-over-univ-neg-free}
	Let $R$ be a semiring satisfying $\bZ\otimes_\bN R=\{0\}$.
	Then the canonical map of group schemes
		$$
		{\mathbold G}_{\mathrm{m}/R}^n\rtimes \underline{S_n} \to \GL_{n/R}
		$$ 
	is an isomorphism. 
	Here, $\underline{S_n}$ denotes $\Spec(\prod_{S_n} R)$, the $n$-th symmetric group viewed as a constant group scheme 
	over $R$.
\end{theorem}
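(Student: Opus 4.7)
The plan is to check bijectivity on $S$-points for every $R$-algebra $S$, which suffices since both sides are representable functors on $\CAlg{R}$. An $S$-point of $\Gm^n\rtimes\underline{S_n}$ consists of an $n$-tuple $(d_1,\dots,d_n)\in(S^\times)^n$ together with an orthogonal idempotent decomposition $1=\sum_{\sigma\in S_n}\epsilon_\sigma$ of $S$; the map sends it to the matrix which on each factor $S[1/\epsilon_\sigma]$ equals $\operatorname{diag}(d_1,\dots,d_n)\cdot P_\sigma$, where $P_\sigma$ is the permutation matrix of $\sigma$. Given $X=(x_{ij})\in\GL_n(S)$ with inverse $Y=(y_{ij})$, the task is to extract such data canonically from $X$.

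The key input is that $\bZ\otimes_\bN R=\{0\}$, so by~(\ref{pro:universally negative-free equiv conditions}) the $R$-algebra $S$ is negative free. Applied to the off-diagonal identities $\sum_k x_{ik}y_{kj}=0$ and $\sum_k y_{ik}x_{kj}=0$ (for $i\neq j$) coming from $XY=YX=I$, this forces every individual summand to vanish. Setting $e_{ij}:=x_{ij}y_{ji}$ and using commutativity of $S$, one reads off the identities $\sum_j e_{ij}=1$ and $\sum_i e_{ij}=1$ from the diagonals of $XY$ and $YX$, the row orthogonality $e_{ij}e_{ik}=x_{ij}(y_{ji}x_{ik})y_{ki}=0$ for $j\neq k$ together with its column analogue, and then the idempotency of each $e_{ij}$ by multiplying the row-sum identity through by $e_{ij}$.

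For each $\sigma\in S_n$, put $\epsilon_\sigma:=\prod_i e_{i,\sigma(i)}$. Expanding $1=\prod_i\sum_j e_{ij}=\sum_{f\:[n]\to[n]}\prod_i e_{i,f(i)}$ and using column orthogonality to kill all non-injective $f$ gives $\sum_\sigma\epsilon_\sigma=1$, and $\epsilon_\sigma\epsilon_\tau=0$ for $\sigma\neq\tau$ follows from row orthogonality at any $i$ with $\sigma(i)\neq\tau(i)$. This produces the desired $S$-point of $\underline{S_n}$. Working over $S_\sigma:=S[1/\epsilon_\sigma]$, each $e_{i,\sigma(i)}$ is a unit idempotent and so equals $1$, while row orthogonality forces $e_{ij}=0$ in $S_\sigma$ for $j\neq\sigma(i)$; in particular $x_{i,\sigma(i)}$ and $y_{\sigma(i),i}$ are mutually inverse units. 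Substituting $m=\sigma(k)$ in the vanishing $\sum_m x_{im}y_{mk}=0$ (for $k\neq i$) gives $x_{i,\sigma(k)}y_{\sigma(k),k}=0$, and inverting $y_{\sigma(k),k}$ yields $x_{i,\sigma(k)}=0$. Hence $X|_{S_\sigma}=\operatorname{diag}(x_{1,\sigma(1)},\dots,x_{n,\sigma(n)})\cdot P_\sigma$, and these diagonal entries assemble into the tuple $(d_i)$ via the identification $S^\times=\prod_\sigma S_\sigma^\times$.

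Uniqueness is immediate because $\epsilon_\sigma$ and $d_i$ are given by explicit formulas in the entries of $X$, giving both surjectivity and injectivity of the map on $S$-points. The hardest point is the interplay in the third step: only through the simultaneous use of universal negative-freeness, commutativity of $S$, and the unit property of the antidiagonal entries of $Y$ over $S_\sigma$ can one force the off-diagonal entries of $X|_{S_\sigma}$ to vanish and thereby recover the monomial structure.
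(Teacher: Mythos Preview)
Your proof is correct and follows essentially the same strategy as the paper: both use~(\ref{pro:universally negative-free equiv conditions}) to force each individual term $x_{ik}y_{kj}$ (and $y_{ik}x_{kj}$) to vanish for $i\neq j$, and then decompose the base so that on each piece the matrix becomes monomial. The organization differs only slightly: you work functorially on $S$-points and build the canonical orthogonal idempotent decomposition $\epsilon_\sigma=\prod_i x_{i,\sigma(i)}y_{\sigma(i),i}$ explicitly, whereas the paper observes that the map is already a monomorphism and produces a Zariski-local section over $\GL_{n/R}$ by inverting one product $x_{ik_i}y_{k_ii}$ per row and then arguing that $i\mapsto k_i$ must be a permutation---but your $\epsilon_\sigma$ are precisely the elements indexing the nonempty pieces of that cover, so the two arguments are the same in substance.
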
 
\begin{proof}
	By~(\ref{pro:universally negative-free equiv conditions}),
	every $R$-algebra is negative free.
	Therefore the defining equations for $\GL_{n/R}$ can be simplified to the following:
		$$
		\sum_k x_{ik}y_{ki}=1,\quad x_{ik}y_{kj}= 0,\quad 
		\sum_k y_{ik}x_{ki}= 1,\quad y_{ik}x_{kj}= 0\quad (j\neq i).
		$$
	(We will actually ignore the last two equations, 
	which might be unsurprising in light of the fact that they are redundant~\cite{reutenauer1984inversion}.)
	To check that the monomorphism 
	${\mathbold G}_{\mathrm{m}/R}^n\rtimes \underline{S_n} \to \GL_{n/R}$ is an isomorphism, 
	it is enough to show that it has a section Zariski-locally on $R$. 
	Therefore by the first equation above, we may assume that for all $i$, 
	there exists a $k_i$ such that $x_{ik_i}y_{k_i i}$ is a unit, and hence that $x_{ik_i}$ is a unit. 
	We then have $y_{k_i j}=0$ for all $j\neq i$, by the second equation.
	Thus for all $i$, the $k_i$-th row of the matrix $(y_{ij})$ 
	is zero except for the entry in the $i$-th column, which is a unit.
	It follows that the function $i\mapsto k_i$ is injective (we may assume $R\neq \{0\}$, since otherwise
	the theorem is trivially true) and hence a permutation.
	Therefore the matrix $(y_{ij})\in\GL_n(R)$ is in the image of $\Gm(R)\rtimes S_n$.
\end{proof}

\begin{remark}
	The fact $\GL_n(R)=(R^{\times})^n \rtimes S_n$ for semirings $R$ which are negative free and for which 
	$\Spec(R)$ is connected (i.e., $R$ is nonzero and has no nontrivial idempotent pairs) has already appeared
	in~\cite{jun2024vector}, corollary 3.19 and proposition 4.6. 
	The theorem above is not much more than a scheme-theoretic expression of this fact.
\end{remark}

\begin{corollary}
	$\GL_{n/R}$ is flat over any semiring $R$ satisfying $\bZ\otimes_\bN R=\{0\}$.
\end{corollary}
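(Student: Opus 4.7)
The plan is to leverage Theorem~\ref{thm:GLn-over-univ-neg-free}, which identifies $\GL_{n/R}$ with ${\mathbold G}_{\mathrm{m}/R}^n\rtimes \underline{S_n}$ as group schemes, and hence in particular as schemes. Flatness is a property of the underlying scheme (indeed of the underlying $R$-module), so the semidirect product structure is irrelevant; only the underlying product scheme ${\mathbold G}_{\mathrm{m}/R}^n\times \underline{S_n}$ matters. It therefore suffices to show that both factors are flat over $R$ and that flatness is preserved under the tensor product of $R$-algebras.

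First I would handle the torus factor. By definition, ${\mathbold G}_{\mathrm{m}/R}=\Spec(R[x,y]/(xy\sim 1))$, which is the localization $R[x,x^{-1}]$. Localizations are flat over semirings, as recalled in the proof of~(\ref{pro: fpqc is zariski}). Iterating (or equivalently using that ${\mathbold G}_{\mathrm{m}/R}^n$ is the localization of $R[x_1,y_1,\dots,x_n,y_n]$ at $x_1\cdots x_n$) shows ${\mathbold G}_{\mathrm{m}/R}^n$ is flat.

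Next, the constant group scheme factor $\underline{S_n}=\Spec(\prod_{S_n}R)$ has coordinate ring $R^{n!}$, which is finite free and a fortiori flat as an $R$-module.

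Finally, the product of the two factors corresponds to the tensor product of their coordinate algebras over $R$. Since the tensor product of flat modules is flat (if $M$ and $N$ are flat $R$-modules, the functor $\vbl\otimes_R(M\otimes_R N)=(\vbl\otimes_R M)\otimes_R N$ preserves finite limits as a composite of two such functors), we conclude that the coordinate algebra of $\GL_{n/R}$ is flat over $R$. No serious obstacle is anticipated; the whole content has been absorbed into the preceding theorem.
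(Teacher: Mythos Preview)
Your proof is correct and follows essentially the same approach as the paper: both invoke Theorem~\ref{thm:GLn-over-univ-neg-free} to identify the coordinate ring of $\GL_{n/R}$ and then verify it is flat over $R$. The paper is slightly more direct, observing that the coordinate ring $\prod_{S_n}R[x_1^{\pm1},\dots,x_n^{\pm1}]$ is in fact \emph{free} as an $R$-module (Laurent polynomials being free on the monomials, and a finite product of free modules being free), whereas you reach flatness via localization and the stability of flatness under tensor products.
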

\begin{proof}
	Indeed, by (\ref{thm:GLn-over-univ-neg-free}), we have $\sO(\GL_{n/R})=\prod_{S_n}R[x^{\pm1}]$, which
	is free and hence flat over $R$.
\end{proof}

\begin{remark}
It follows that~(\ref{pro:GL_n-not-flat}) has a converse of a kind:
A semiring is universally negative free if and only if it is negative free and $\GL_n$ is flat over it
for all $n$.	
\end{remark}

\bibliography{references}

\providecommand{\bysame}{\leavevmode\hbox to3em{\hrulefill}\thinspace}
\providecommand{\MR}{\relax\ifhmode\unskip\space\fi MR }
\providecommand{\MRhref}[2]{%
  \href{http://www.ams.org/mathscinet-getitem?mr=#1}{#2}
}
\providecommand{\href}[2]{#2}
\begin{thebibliography}{EGNO16}

\bibitem[AF22]{andre2022canonical}
Yves Andr{\'e} and Luisa Fiorot, \emph{On the canonical, fpqc, and finite
  topologies on affine schemes. {T}he state of the art}, Ann. Sc. Norm. Super.
  Pisa Cl. Sci. (5) \textbf{23} (2022), no.~1, 81--114.

\bibitem[BG16]{borger2016boolean}
James Borger and Darij Grinberg, \emph{Boolean {W}itt vectors and an integral
  {E}drei--{T}homa theorem}, Selecta Mathematica \textbf{22} (2016), no.~2,
  595--629.

\bibitem[Bor94]{Borceux:Handbook.v2}
Francis Borceux, \emph{Handbook of categorical algebra. 2}, Encyclopedia of
  Mathematics and its Applications, vol.~51, Cambridge University Press,
  Cambridge, 1994, Categories and structures.

\bibitem[Bor16]{borger2016witt}
James Borger, \emph{Witt vectors, semirings, and total positivity}, {A}bsolute
  {A}rithmetic and $\mathbb{F}_1$-{G}eometry (2016), 273--329.

\bibitem[CC19]{Connes-Consani:Riemann-Roch-strategy}
Alain Connes and Caterina Consani, \emph{The {R}iemann-{R}och strategy: complex
  lift of the scaling site}, Advances in noncommutative geometry---on the
  occasion of {A}lain {C}onnes' 70th birthday, Springer, Cham, 2019,
  pp.~53--125. \MR{4300552}

\bibitem[Cul19]{culling2019etale}
Robert Hendrik~Scott Culling, \emph{On the {\'e}tale fundamental group of
  schemes over the natural numbers}, Ph.D. thesis, The Australian National
  University (Australia), 2019.

\bibitem[DG70]{Demazure:GA}
Michel Demazure and Pierre Gabriel, \emph{Groupes alg\'ebriques. {T}ome {I}:
  {G}\'eom\'etrie alg\'ebrique, g\'en\'eralit\'es, groupes commutatifs}, Masson
  \& Cie, \'Editeur, Paris, 1970, Avec un appendice {\it Corps de classes
  local}\ par Michiel Hazewinkel. \MR{46 \#1800}

\bibitem[DR80]{Deligne-Ribet}
Pierre Deligne and Kenneth~A. Ribet, \emph{Values of abelian {$L$}-functions at
  negative integers over totally real fields}, Invent. Math. \textbf{59}
  (1980), no.~3, 227--286. \MR{579702 (81m:12019)}

\bibitem[EGNO16]{Etingof2016tensor-categories}
Pavel Etingof, Shlomo Gelaki, Dmitri Nikshych, and Victor Ostrik, \emph{Tensor
  categories}, vol. 205, American Mathematical Soc., 2016.

\bibitem[FG06]{Fock-Goncharov:higher-Teichmueller}
Vladimir Fock and Alexander Goncharov, \emph{Moduli spaces of local systems and
  higher {T}eichm\"uller theory}, Publ. Math. Inst. Hautes \'Etudes Sci.
  (2006), no.~103, 1--211. \MR{2233852 (2009k:32011)}

\bibitem[FW14]{flores2014picard}
Jaret Flores and Charles Weibel, \emph{Picard groups and class groups of monoid
  schemes}, Journal of Algebra \textbf{415} (2014), 247--263.

\bibitem[GD71]{EGAI-Springer-edition}
A.~Grothendieck and J.~A. Dieudonn\'{e}, \emph{\'{E}l\'{e}ments de
  g\'{e}om\'{e}trie alg\'{e}brique. {I}}, Grundlehren der mathematischen
  Wissenschaften, vol. 166, Springer-Verlag, Berlin, 1971. \MR{3075000}

\bibitem[GG16]{Giansiracusa-Giansiracusa:equations-of-tropical-varieties}
Jeffrey Giansiracusa and Noah Giansiracusa, \emph{Equations of tropical
  varieties}, Duke Math. J. \textbf{165} (2016), no.~18, 3379--3433.
  \MR{3577368}

\bibitem[Gol92]{Golan:book1}
Jonathan~S. Golan, \emph{The theory of semirings with applications in
  mathematics and theoretical computer science}, Pitman Monographs and Surveys
  in Pure and Applied Mathematics, vol.~54, Longman Scientific \& Technical,
  Harlow, 1992. \MR{1163371 (93b:16085)}

\bibitem[Gol99]{Golan:book2}
\bysame, \emph{Semirings and their applications}, Kluwer Academic Publishers,
  Dordrecht, 1999, Updated and expanded version of {\it The theory of
  semirings, with applications to mathematics and theoretical computer science}
  [Longman Sci. Tech., Harlow, 1992]. \MR{1746739}

\bibitem[Gro]{Grothendieck:Buffalo-course}
A.~Grothendieck, \emph{Introduction to functorial algebraic geometry}, Notes
  written by F.\ Gaeta of a course offered at State University of New York,
  Buffalo (1973).

\bibitem[Gro60]{EGA-no.4}
Alexander Grothendieck, \emph{\'{E}l\'ements de g\'eom\'etrie alg\'ebrique.
  {I}. {L}e langage des sch\'emas}, Inst. Hautes \'Etudes Sci. Publ. Math.
  (1960), no.~4, 228. \MR{36 \#177a}

\bibitem[GUZ22]{gross2022principal}
Andreas Gross, Martin Ulirsch, and Dmitry Zakharov, \emph{Principal bundles on
  metric graphs: The {GL}$_n$ case}, Advances in Mathematics \textbf{411}
  (2022), Paper No. 108775, 45 pp.

\bibitem[Har77]{Hartshorne:algebraic-geometry}
Robin Hartshorne, \emph{Algebraic geometry}, Graduate Texts in Mathematics,
  vol. No. 52, Springer-Verlag, New York-Heidelberg, 1977. \MR{463157}

\bibitem[Jan87]{Jantzen:book-first-edition}
Jens~Carsten Jantzen, \emph{Representations of algebraic groups}, Pure and
  Applied Mathematics, vol. 131, Academic Press, Inc., Boston, MA, 1987.
  \MR{899071}

\bibitem[JMT19]{jun2019picard}
Jaiung Jun, Kalina Mincheva, and Jeffrey Tolliver, \emph{Picard groups for
  tropical toric schemes}, manuscripta mathematica \textbf{160} (2019),
  339--357.

\bibitem[JMT24]{jun2024vector}
\bysame, \emph{Vector bundles on tropical schemes}, Journal of Algebra
  \textbf{637} (2024), 1--46.

\bibitem[JT04]{janelidze2004facets}
George Janelidze and Walter Tholen, \emph{Facets of {D}escent {III}: {M}onadic
  descent for rings and algebras}, Applied Categorical Structures \textbf{12}
  (2004), no.~5-6, 461--477.

\bibitem[Jun15]{jun2015algebraic}
Jai~Ung Jun, \emph{Algebraic geometry over semi-structures and hyper-structures
  of characteristic one}, Ph.D. thesis, Johns Hopkins University, 2015.

\bibitem[Jun17]{jun2017vcech}
Jaiung Jun, \emph{{\v{C}}ech cohomology of semiring schemes}, Journal of
  Algebra \textbf{483} (2017), 306--328.

\bibitem[Kat04]{Katsov:flat-semimodules}
Yefim Katsov, \emph{On flat semimodules over semirings}, Algebra Universalis
  \textbf{51} (2004), no.~2-3, 287--299. \MR{2099802 (2005h:16086)}

\bibitem[Lur18]{Lurie:categorical-logic-classt}
Jacob Lurie, \emph{Categorical logic},
  \url{https://www.math.ias.edu/~lurie/278x.html}, 2018, Class offered at
  Harvard University, lecture 4.

\bibitem[Mac19]{macpherson2019projective}
Andrew~W Macpherson, \emph{Projective modules over polyhedral semirings},
  Journal of Algebra \textbf{518} (2019), 237--271.

\bibitem[Mil86]{Milne:arithmetic-duality-theorems}
J.~S. Milne, \emph{Arithmetic duality theorems}, Perspectives in Mathematics,
  vol.~1, Academic Press, Inc., Boston, MA, 1986. \MR{881804}

\bibitem[MR18]{Maclagan-Rincon:tropical-ideals}
Diane Maclagan and Felipe Rinc\'{o}n, \emph{Tropical ideals}, Compos. Math.
  \textbf{154} (2018), no.~3, 640--670. \MR{3778187}

\bibitem[MR20]{Maclagan-Rincon:tropical-schemes}
\bysame, \emph{Tropical schemes, tropical cycles, and valuated matroids}, J.
  Eur. Math. Soc. (JEMS) \textbf{22} (2020), no.~3, 777--796. \MR{4055988}

\bibitem[MS15]{Maclagan-Sturmfels:book}
Diane Maclagan and Bernd Sturmfels, \emph{Introduction to tropical geometry},
  Graduate Studies in Mathematics, vol. 161, American Mathematical Society,
  Providence, RI, 2015. \MR{3287221}

\bibitem[Nar04]{Narkiewicz:book}
W{\l}adys{\l}aw Narkiewicz, \emph{Elementary and analytic theory of algebraic
  numbers}, third ed., Springer Monographs in Mathematics, Springer-Verlag,
  Berlin, 2004. \MR{2078267}

\bibitem[{nLa}21]{nlab:dualizable-modules}
{nLab authors}, \emph{Dualizable module},
  \url{https://ncatlab.org/nlab/show/dualizable+module}, April 2021,
  \href{https://ncatlab.org/nlab/revision/dualizable+module/2}{revision 2}.

\bibitem[Pir15]{pirashvili2015cohomology}
Ilia Pirashvili, \emph{On cohomology and vector bundles over monoid schemes},
  Journal of Algebra \textbf{435} (2015), 33--51.

\bibitem[RS84]{reutenauer1984inversion}
Christophe Reutenauer and Howard Straubing, \emph{Inversion of matrices over a
  commutative semiring}, Journal of Algebra \textbf{88} (1984), no.~2,
  350--360.

\bibitem[Rut64]{Rutherford:Cayley-Hamilton}
D.~E. Rutherford, \emph{The {C}ayley-{H}amilton theorem for semi-rings}, Proc.
  Roy. Soc. Edinburgh Sect. A \textbf{66} (1962/64), 211--215. \MR{172885}

\bibitem[{Sta}24]{stacks-project}
The {Stacks project authors}, \emph{The stacks project},
  \url{https://stacks.math.columbia.edu}, 2024.

\bibitem[TV09]{Toen-Vaquie:Under-Spec-Z}
Bertrand To{\"e}n and Michel Vaqui{\'e}, \emph{Au-dessous de {${\rm Spec}\,\Bbb
  Z$}}, J. K-Theory \textbf{3} (2009), no.~3, 437--500. \MR{2507727
  (2010j:14006)}

\end{thebibliography}
\bibliographystyle{amsalpha}

\end{document}